\documentclass[11pt,reqno]{amsart} 
\usepackage[T1]{fontenc}
\usepackage[utf8]{inputenc}
\usepackage{csquotes}
\usepackage{amsfonts, amssymb, amsmath, amsthm,latexsym,slashed}
\usepackage{mathtools, thm-restate}

\usepackage{bbm}
\usepackage{stmaryrd}
\usepackage{mathrsfs}
\usepackage{bbding}
\usepackage{geometry}
\usepackage{hyperref}
\usepackage{enumitem}
\usepackage{xcolor}
\usepackage[UKenglish]{babel}
\hypersetup{
	colorlinks,
	linkcolor={blue}, 
	citecolor={green!50!black}, 
	urlcolor={cyan!75!blue} 
}

\usepackage[authoryear,square]{natbib} 
\date{\today}
\title{}

\def\<{\left<}
\def\>{\right>}

\def\Z{\mathbb{Z}}
\def\C{{\mathbb{C}}}
\def\R{{\mathbb{R}}}

\def\mat#1#2#3#4{{\begin{pmatrix} #1&#2\\ #3&#4 \end{pmatrix}}}

\newtheorem{df}{Definition}[section]
\newtheorem{lemma}[df]{Lemma}
\newtheorem{thm}[df]{Theorem}
\newtheorem*{thm*}{Theorem}
\newtheorem{prop}[df]{Proposition}
\newtheorem{cor}[df]{Corollary}
\newtheorem{nt}[df]{Notation}

\newtheorem{defprop}[df]{Definition/Proposition}
\theoremstyle{remark}
\newtheorem{rem}[df]{Remark}
\newtheorem{ex}[df]{Example}

\DeclareMathOperator{\Ker}{Ker}

\DeclareMathOperator{\APS}{APS}
\DeclareMathOperator{\ind}{ind}
\newcommand{\gInd}{\ind_{\gamma}}
\DeclareMathOperator{\sfl}{sf}

\DeclareMathOperator{\tr}{tr}

\DeclareMathOperator{\Str}{tr}
\DeclareMathOperator{\grad}{{grad}}

\DeclareMathOperator{\sign}{{sign}}
\DeclareMathOperator{\Isom}{{Isom}}
\DeclareMathOperator{\pr}{pr}
\DeclareMathOperator{\spec}{spec}
\DeclareMathOperator{\Eig}{Eig}
\DeclareMathOperator{\LHS}{LHS}
\DeclareMathOperator{\RHS}{RHS}
\DeclareMathOperator{\even}{{even}}

\DeclareMathOperator{\dist}{dist}

\newcommand{\fT}{\mathsf{T}}

\DeclareMathOperator{\ri}{i}

\newcommand{\ka}{\mathfrak{a}}
\newcommand{\kb}{\mathfrak{b}}
\newcommand{\fc}{\mathsf{c}}

\newcommand{\sE}{\mathscr{E}}
\newcommand{\sM}{\mathscr{M}}

\newcommand{\bbS}{\mathbb{S}}
\newcommand{\sS}{S}
\newcommand{\spinorBun}{\sS M}

\newcommand{\one}{\mathbbm{1}}
\newcommand{\dVol}{\mathrm{d vol}}

\DeclareMathOperator{\End}{End}
\newcommand{\fh}{h}

\DeclareMathOperator{\rT}{\mathrm{T}}

\newcommand{\rd}{\mathrm{d}}

\newcommand{\ft}{\mathsf{t}}

\newcommand{\tangent}{\mathrm{T}}
\newcommand{\coTan}{\mathrm{T}^{*}}

\DeclareMathOperator{\ch}{ch}

\title[Equivariant Index Theorem]{A Lorentzian Equivariant Index Theorem}
\author[O.~Islam]{Onirban Islam}
\address{Institut f\"{u}r Mathematik, Universit\"{a}t Potsdam, Karl-Liebknecht-Stra\ss{}e 24-25, 14476 Potsdam, Germany.}
\email{onirban.islam@math.uni-potsdam.de}
\author[L.~Ronge]{Lennart Ronge}
\address{Institut f\"{u}r Mathematik, Universit\"{a}t Potsdam, Karl-Liebknecht-Stra\ss{}e 24-25, 14476 Potsdam, Germany.}
\email{ronge@uni-potsdam.de}
\subjclass[2010]{}
\keywords{Equivariant Lorentzian index theorem, equivariant spectral flow}
\date{18 February 2026}
\begin{document}

\begin{abstract}
We develop a formula for the equivariant index of a twisted Dirac operator on a compact globally hyperbolic spacetime with timelike boundary on which a group acts isometrically, subject to APS boundary conditions. The formula is the same as in the Riemannian case: the equivariant index for a group element is an integral over the fixed point set of that element plus some boundary terms. The proof uses a surprisingly simple technique for reducing from the equivariant to the non-equivariant regime in order to show an equivariant version of the Lorentzian "index $=$ spectral flow" formula.
\end{abstract}
\maketitle
\tableofcontents 
%
%
%
%
%
%
%
%
%
%
\section{Introduction}

The~\citet{Atiyah_AnnMath_1968_I, Atiyah_AnnMath_1968_III} 
index theorem is the seminal work in geometric analysis that combines geometry, topology, and analysis. 
On an even-dimensional closed Riemannian spin manifold $\Sigma$, it states that the Fredholm index $\ind (\slashed{D})$ of a Dirac operator $\slashed{D}$ is given by the integral of the $\hat{A}$-class of
$\Sigma$~\cite[Theorem (5.3)]{Atiyah_AnnMath_1968_III}. 
There are various generalisations of their work over time, for which we refer to the 
textbook~\citep{Lawson_PUP_1989} 
and the recent 
survey~\citep{Freed_BullAMS_2021}. 
Amongst those, an interesting case occurs when a compact Lie group $\Gamma$ acts on $\Sigma$ such that its action is compatible with all the relevant structures so that it commutes with $\slashed{D}$. 
In this situation, the Fredholm index $\ind (\slashed{D})$ is replaced by the equivariant 
index, a function on $\Gamma$ given by
\[\ind_\gamma(\slashed{D}):=\tr(\gamma|_{\ker(\slashed{D})})-\tr(\gamma|_{\ker(\slashed{D}^*)}).\]
The Lefschetz fixed-point formula due 
to~\citet[Theorem 2.12]{Atiyah_AnnMath_1968_II} 
and~\citet[(5.4)]{Atiyah_AnnMath_1968_III} 
then states that $\gInd (\slashed{D})$ (for any $\gamma \in \Gamma$) is given by the integral of a suitable form over the fixed-point set $\Sigma_{\gamma}$ of $\gamma$. 
The first proof of the Atiyah-Segal-Singer index theorem was essentially topological ($K$-theoretic and cohomological). 
Analytical proofs were developed later 
by~\cite[Theorem 4.15]{Bismut_JFA_1984} 
(probabilitic approach) 
and  
by~\citet[Theorem 3.32]{Berline_BullSocMathFrance_1985} 
(heat kernel method). 

The ellipticity of Dirac operators on a Riemannian (spin) manifold is arguably the quintessential ingredient in the Atiyah-(Segal-)Singer index theorem. 
Since Dirac operators on a Lorentzian (spin) manifold are hyperbolic, a straightforward Lorentzian generalisation of the Atiyah-(Segal-)Singer index theorem was not expected 
until~\citet[theorems 3.3, 3.5, and 4.1]{BSglobal} 
showed that a Dirac operator $D$ on a spatially compact globally hyperbolic spacetime $M$, subject to the Atyiah-Patodi-Singer (APS) boundary condition at the spacelike boundary $\partial M$, is Fredholm. 
They have also proven that $\ind (D)$ is given by the same expression as in 
the~\citet*[Theorem (3.10)]{Atiyah_MathProcCam_I_1975} 
index theorem. 
So far, there are a few 
generalisations~\citep{Braverman_MathZ_2020, vDR, Damaschke, Shen_PureApplAanal_2022, BSlocal} 
of the B\"{a}r-Strohmaier global index theorem. 
Amongst 
these,~\citet{Damaschke} 
has considered a modified index in the presence of a group action, but he studied the $L^2$-index theorem, where the group serves as a way of compensating for the non-compactness of the manifold and the index is still a single number. 
In contrast, we assume compactness but use a refined notion of index. 

The purpose of this article is to put forward the B\"{a}r-Strohmaier global index theorem on an equivariant setting in order to obtain a Lorentzian analogue of the Atiyah-Segal-Singer index theorem. 
But the Lorentzian spin spacetime (see Corollary~\ref{Isplit}) $M$ pertinent to this article has spacelike boundary $\partial M$. 
Hence we have an additional boundary contribution to the Atiyah-Segal-Singer index theorem. 
On a $\Gamma$-equivariant Riemannian spin manifold, such an index theorem was first derived 
by~\citet[Theorem 1.2]{Donnelly_IndianaUMJ_1978}. 
His result can be considered as an equivariant generalisation of the Atiyah-Patodi-Singer index theorem. 
Recent results on this direction can be found, for instance, 
in~\citep{Braverman_JGP_2015, Braverman_IndianaUMJ_2019, Hochs_2020, Hoch_IMRN_2023, Hochs_2024, Sadegh_JFA_2024}.  
Therefore, our result can be perceived as an analogue of the Atiyah-Segal-Singer-Donnelly index theorem in a Lorentzian setting. 
To be precise, our main theorem is as follows. 

%
%
%
\begin{restatable}{thm}{ThmGlobalGInd} 
\label{thm: global_g_ind}
    Let $(M, g)$ be an even-dimensional smooth, compact globally hyperbolic spin spacetime with spacelike boundary $\partial M = \Sigma_{0} \sqcup \Sigma_{1}$ where $\Sigma_{0}$ and $\Sigma_{1}$ are respectively the past and future boundary.  
    Let $E \to M$ be a smooth complex vector bundle over $M$ and $D$ the twisted Dirac operator on the twisted right-handed spinor bundle $S^{+} M \otimes E \to M$. We assume that there is a group $\Gamma$ acting on $M$ and $E$ by isometries that preserve all relevant structures, so $D$ is $\Gamma$-equivariant. 
    Then, for each $\gamma \in \Gamma$, the equivariant index $\gInd (D_{\APS})$ of the twisted Dirac operator $D_{\APS}$ under the Atiyah-Patodi-Singer (APS) boundary condition is given by 
    \begin{align*}
        \gInd (D_{\APS})   
        & = 
        \int_{M_{\gamma}} (2 \pi \ri)^{- \ell} \ri^{-\ell^{\perp}} \ka  
        + \int_{\partial M_{\gamma}} (2 \pi \ri)^{- \ell} \ri^{-\ell^{\perp}} \fT \ka  
        + \kb, 
        \nonumber \\ 
        \ka 
        & := 
        \frac{\hat{A} (M_{\gamma}) \wedge \ch_{\gamma}(E)}{\det^{1/2} \big( \one - \gamma^{\perp} \exp (- R^{\perp}) \big)}, 
        \nonumber \\ 
        \kb 
        & := 
        - 
        \frac{1}{2} \big( \tr (\gamma |_{\ker A (0)}) + \tr (\gamma |_{\ker A (1)}) +  \eta_{\gamma} (A(0)) - \eta_{\gamma} (A(1)) \big). 
    \end{align*}
    Here $M_{\gamma}$ is the fixed-point set of $\gamma$ with the orientation (see Remark~\ref{rem: orientation_fixed_pt_set}) induced by the spin-structure and $\gamma$ on each connected component $M_{\gamma}^{n}$ of $M_{\gamma}$, $\ell := \dim M_{\gamma}^{n} / 2$ and $\ell^{\perp} := \dim (M_{\gamma}^{\perp} |_{ M_{\gamma}^{n}}) / 2$ are half of the dimension and codimension of $M_{\gamma}^{n}$, $\hat{A} (M_{\gamma})$ is the $\hat{A}$-form (see~\eqref{eq: def_A_hat_genus_fixed_pt_set}) of $M_{\gamma}$, $\ch_{\gamma} (\nabla^E)$ is the localised Chern character (see~\eqref{eq: def_Chern_char_form_twist_bundle}) of $E$, $\gamma^{\perp}$ is the restriction of $\rd \gamma$ to the normal bundle $M_{\gamma}^{\perp}$ of $M_{\gamma}$, $R^{\perp}$ is the curvature of $M_{\gamma}^{\perp}$, $\fT \ka$ is the transgression form (see Lemma~\ref{lem: transgression_form_ASS}) of $\ka$, and $A (i), i = 0, 1$ is the twisted Dirac operator on $\Sigma_{i}$. 

    If, in addition, $M_{\gamma}$ has a spin-structure then 
    \begin{equation*}
        \ka = \ri^{-\ell^{\perp}} \frac{\hat{A} (M_{\gamma}) \wedge \ch_{\gamma} (\nabla^E)}{\ch_{\gamma} (\sS M_{\gamma}^{\perp})},  
    \end{equation*}
    where $\sS M_{\gamma}^{\perp}$ is the spinor bundle over $M_{\gamma}^{\perp}$. 
\end{restatable}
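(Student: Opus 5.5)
Following the Bär--Strohmaier strategy, I reduce the Lorentzian equivariant index to an equivariant spectral flow and then to the Riemannian Atiyah--Segal--Singer--Donnelly theorem on a cylinder. The abstract points to a reduction from the equivariant to the non-equivariant setting, and I make that the central step.

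Using the splitting of Corollary~\ref{Isplit}, write $M = [0,1]\times\Sigma$ with $g = -N^2 \rd t^2 + g_t$. I would first check that $\Gamma$ can be assumed to preserve this splitting, or at least the time function up to homotopy. Then $D$ becomes, up to conjugation, $\tilde\beta(\partial_t - \ri A(t))$ with a $\Gamma$-equivariant family of twisted Dirac operators $A(t)$ on $\Sigma$. The APS boundary problem is Fredholm by Bär--Strohmaier. Since $\gamma$ commutes with $D$ and preserves the APS projections, kernel and cokernel are $\gamma$-invariant, so $\gInd(D_{\APS})$ is well defined. The main analytic claim is the equivariant "index $=$ spectral flow" formula
\[
\gInd(D_{\APS}) = \sfl_\gamma(A(\cdot)) := \sum \tr\big(\gamma|_{\text{crossing eigenspaces}}\big),
\]
with signs, counted through the eigenvalue crossings of $A(t)$.

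For the reduction I would decompose every $\gamma$-invariant finite-dimensional space into eigenspaces of $\gamma$ when $\Gamma$ is compact, or of the closure of $\langle\gamma\rangle$ acting through a compact quotient. Then $\tr(\gamma|_V) = \sum_\lambda \lambda \dim V_\lambda$. Alternatively, one can twist by irreducible representations. The simplest route is to note that $D$ commutes with the projections $P_\lambda$ onto $\gamma$-isotypic components of $L^2$ sections. Restricting the Bär--Strohmaier machinery to each component, that is, applying the non-equivariant index $=$ spectral flow argument to $D|_{\operatorname{im}P_\lambda}$, gives
\[
\ind(D_{\APS}|_\lambda) = \sfl(A|_\lambda).
\]
The proof of that theorem runs through the wave evolution operator and Fredholm pairs, and all of it commutes with $P_\lambda$. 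Summing $\lambda\cdot(\ldots)$ yields the equivariant formula. I expect this to be the \emph{main obstacle}. When $\gamma$ has infinitely many eigenvalues, the sum must be controlled. Here only finitely many components contribute to kernel and crossings, because both are finite-dimensional, but the argument must check that the Fredholm-pair identities still hold after the projections are applied.

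Next I would identify $\sfl_\gamma(A)$ with a Riemannian quantity. Equip $[0,1]\times\Sigma$ with the $\Gamma$-invariant Riemannian metric $\rd t^2 + g_t$, deformed to a product near the boundary. The Riemannian APS operator $\partial_t + A(t)$ has the same spectral flow. By the equivariant APS index theorem of Donnelly, its equivariant index equals the fixed-point integral plus boundary terms, with $\kb$ supplying the $\eta_\gamma$ and kernel corrections, and the transgression $\fT\ka$ from Lemma~\ref{lem: transgression_form_ASS} compensating for the fact that the metric is not of product type. I would then check that the Lorentzian and Riemannian local forms agree after Wick rotation. The characteristic forms are polynomial in curvature, and the forms built from the Lorentzian connection match, with the factors $(2\pi\ri)^{-\ell}\ri^{-\ell^\perp}$ tracking orientation and spin sign conventions (Remark~\ref{rem: orientation_fixed_pt_set}). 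Alternatively, one can avoid Wick rotation by using the Lorentzian variational formula $\frac{\rd}{\rd t}\eta_\gamma(A(t)) = $ local term, which is obtained from the equivariant heat kernel of $A(t)$ on $\Sigma$ localised at $\Sigma_\gamma$. Integrating this over $t$ gives the $\ka$ and $\fT\ka$ terms.

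Finally, when $M_\gamma$ is spin, the normal bundle carries a spin structure. The standard identity $\det^{1/2}(\one-\gamma^\perp e^{-R^\perp}) = \pm\, \ri^{\ell^\perp}\ch_\gamma(\sS M_\gamma^\perp)$ then gives the second formula for $\ka$, with the sign fixed by the chosen orientation.
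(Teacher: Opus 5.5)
Your architecture matches the paper's: the equivariant splitting from Corollary~\ref{Isplit}, decomposition into $\gamma$-eigenspaces, Lorentzian equivariant index $=$ equivariant spectral flow $=$ Riemannian equivariant index, then Theorem~\ref{thm: BGVD_ind_thm} plus transgression. The gap is that the step carrying all the weight is only asserted. $D|_{E_\lambda(\gamma)}$ is not a Dirac operator, so B\"ar--Strohmaier's result cannot be applied to it. Saying that ``all of it commutes with $P_\lambda$'' amounts to reopening their proof and checking every step on the restricted operator, which is exactly the obstacle you flag and leave open. The same issue is present, unacknowledged, on the Riemannian side, where you just assert that the equivariant index of $\partial_t+A(t)$ equals the equivariant spectral flow.

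The paper closes this with Theorem~\ref{Diractoabstract}, a precise form of your conjugation claim. Your version is not correct as stated when $N$ varies along the slices. In that case $\nabla_{\partial_t}\nu=-\grad_\Sigma N\neq 0$ (Lemma~\ref{gradN}), and relative to $\partial_t$ the spatial part is $NA$, not $A$. Set $W=N^{1/2}\rho\,\Phi^{-1}\Pi_t$: $\rho$ absorbs the mean-curvature term, and $\Phi$ corrects parallel transport along the non-geodesic time lines so that $\fc(\nu(t))$ is carried to $\fc(\nu(0))$. Then
\[
WDW^{-1}=\fc(\nu(0))\,N^{-1/2}(\partial_t-\ri B)\,N^{-1/2},\qquad B=N^{1/2}\tilde A N^{1/2},
\]
and the Riemannian analogue holds with $\partial_t+B$. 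Operators of the form $\partial_t-\ri B$ or $\partial_t+B$ restrict to $\gamma$-eigenspaces as operators of the same form. Hence the abstract index $=$ spectral flow theorem (Theorem~\ref{indsf}) applies on each eigenspace as a black box, and Theorems~\ref{inddeco} and~\ref{sfdeco} sum the pieces into Theorem~\ref{indsfeq}. Because $N\equiv 1$ near $\partial M$, $W$ preserves the APS conditions and $s\mapsto N^{s/2}\tilde A N^{s/2}$ is a fixed-endpoint homotopy, so $\sfl_\gamma(B)=\sfl_\gamma(A)$. Without this conjugation, or a genuine step-by-step check of the B\"ar--Strohmaier proof on restricted operators, your reduction remains a heuristic.

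Two smaller corrections. With the APS conditions used here, the identity is $\ind_\gamma(D_{\APS})=\sfl_\gamma(A)-\tr(\gamma|_{\ker A(1)})$, not $\sfl_\gamma(A)$. This only drops out of your argument because the same term appears for the Riemannian operator. Also, there is nothing to check ``after Wick rotation'': the integrands built from $-N^2\rd t^2+g_t$ and $N^2\rd t^2+g_t$ do not agree pointwise in general. What is actually used is that Chern--Weil transgression works for any two connections on the same bundles. One therefore passes directly from the Lorentzian Levi-Civita connection, which defines $\ka$ in the statement, to the product connection used in Theorem~\ref{thm: BGVD_ind_thm}: $\int_{M_\gamma}\tilde\ka=\int_{M_\gamma}\ka+\int_{\partial M_\gamma}\fT\ka$. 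This is what produces the boundary integral in the formula.
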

%

%
%

Our proof relies heavily on the existing work on both the Lorentzian index theorem and the Riemannian equivariant index theorem. 
The general proof strategy follows that of the classical Lorentzian index theorem 
by~\citet{BSglobal} 
and we also use some of their non-equivariant results. 
At the point where they have used 
the~\citet*{Atiyah_MathProcCam_I_1975} 
index theorem, we employ the equivariant one by combining the result (see Theorem~\ref{thm: BGVD_ind_thm}) 
of~\citet[Theorem 3.32]{Berline_BullSocMathFrance_1985} 
(see also~\citep[Theorem 6.16]{Berline_Springer_2004})
with that of~\citet[Theorem 1.2]{Donnelly_IndianaUMJ_1978}.  
In a sense, the proof of \cite{BSglobal} is a machine for turning Riemannian index theorems into Lorentzian index theorems. 
We, in turn, build machinery to transform non-equivariant results into equivariant ones. Combining both, we obtain a way of turning Riemannian equivariant index theorems into Lorentzian ones. The trick for reducing equivariant quantities to non-equivariant ones is surprisingly simple and versatile, but in order to be able to apply it, some preparatory work must be done. 
The results of each of the preparatory sections are of sufficient generality to be interesting in their own right.

In Section~\ref{sec: spacetime_equivariant_splitting}, we develop a suitable splitting to work in for the rest of the paper. We show that a compact globally hyperbolic spacetime $M$ with spacelike boundary $\partial M$ can be decomposed as a product 
\[M\cong [0,1]\times\Sigma\]
in such a way that every isometry of $M$ is given by an isometry of $\Sigma$ acting in the second component (see Corollary \ref{Isplit}). In particular, this means that our group action preserves each hypersurface $\{t\}\times\Sigma$ and we get a well-defined family $(A(t))_{t\in[0,1]}$ of hypersurface Dirac operators that are also equivariant under the group action. This splitting also allows us to define a Riemannian equivariant metric on $M$.

In Section~\ref{sec: G_ind_G_sf}, we develop our machinery for reducing equivariant quantities to non-equivariant ones. We do this by decomposing into eigenspaces of the action of a group element $\gamma$. On each eigenspace, $\gamma$ acts as a multiple of the identity and so the $\gamma$-index and $\gamma$-spectral flow are just multiples of their non-equivariant counterparts. Summing over all eigenspaces, we obtain the formulas (see theorems \ref{inddeco} and \ref{sfdeco}) 
\[\ind_\gamma(D)=\sum\limits_{\lambda\in \Eig(\gamma)}\lambda\ind(D|_{E_\lambda(\gamma)})\]
and
\[\sfl_\gamma(A)=\sum\limits_{\lambda\in \Eig(\gamma)}\lambda \sfl(A|_{E_\lambda(\gamma)}), \]
where $E_\lambda(\gamma)$ denotes the $\lambda$-eigenspace of $\gamma$. These provide a simple way of proving theorems about equivariant indices and spectral flows from corresponding non-equivariant theorems, as long as everything restricts well to closed subspaces.

In our geometric setting described in Section~\ref{sec: set-up}, this is not so simple: the restriction of a Dirac operator to an eigenspace of the group action is not a Dirac operator again. In order to get something that restricts well to eigenspaces, we show in Section~\ref{sec: relation_abstract_model_op} that the Dirac operator is unitarily equivalent to the operator
\[\mathsf{c}(\nu(0))N^{-\frac{1}{2}}(\partial_t-{\ri}B)N^{-\frac{1}{2}},\]
where $\nu(0)$ is the unit normal to the past boundary, $N$ is the lapse function and $B$ is a family of self-adjoint operators directly related to the hypersurface Dirac operators $A$ (see Theorem \ref{Diractoabstract}). In the Riemannian case, we get the same result without $-{\ri}$. This generalises \cite[Proposition 3.5]{vDsplit}, where a similar result is shown under the additional assumption that the unit normal field $\nu$ is parallel. This reduces the Dirac index to that of the abstract operator $\partial_t-{\ri}B$, which then allows us to apply the results of Section~\ref{sec: G_ind_G_sf}. However, this result is also of independent interest as a means of relating the geometric setting to that of abstract operator families.

In Section~\ref{sec: g_ind_thm}, we put everything together. Combining the results of sections~\ref{sec: G_ind_G_sf} and~\ref{sec: relation_abstract_model_op} with suitable theorems from the non-equivariant setting, we obtain that the equivariant index of both the Lorentzian Dirac operator $D$ and an auxiliary Riemannian Dirac operator $\hat D$ coincide with the equivariant spectral flow of the family $A$ of hypersurface Dirac operators. 

We then describe the relevant equivariant characteristic forms and compute the transgression form $\fT\ka$. This allows us to conclude that the right hand side of Theorem~\ref{thm: global_g_ind} is not only well defined, but also coincides with the right hand side of the Riemannian equivariant index theorem \ref{thm: BGVD_ind_thm}, which is obtained by combining the index theorems due to 
\cite{Berline_BullSocMathFrance_1985} 
and~\cite{Donnelly_IndianaUMJ_1978}. Putting everything together, we obtain
\[\ind_\gamma(D)=\sfl_\gamma(A)=\ind_\gamma(\hat D)=\RHS,\]
where $\RHS$ denotes the right hand side of either index theorem.

%
%
%
%
%
%
%
%
%
%
\section{Spacetime splitting} 
\label{sec: spacetime_equivariant_splitting}
The proof of the global index theorem works with eigenspaces of Dirac operators on Cauchy slices. To do this in the equivariant case, we need to make sure that the group action is well defined on these eigenspaces. This requires two things: that the action preserves the slices and that it commutes with the slice operators. In order to avoid problems with boundary conditions, we furthermore want to make sure that the lapse function is constantly one near the boundary. We will see that this is always possible. In fact, somewhat surprisingly, we can always choose a spacetime splitting in such a way that the group action comes from an action on $\Sigma$ that is constant in time. 
To begin with, we recall that a hypersurface $\Sigma$ in a spacetime $\sM$ is called a Cauchy hypersurface if it is intersected exactly once by each timelike curve. A spacetime $\sM$ is called globally hyperbolic if it admits a Cauchy hypersurface.

\begin{df} \label{def: cpt_GHST_sBdy}
By a (timelike compact) globally hyperbolic manifold $M$ with spacelike boundary $\partial M$, we mean the region $M$ between two disjoint spacelike Cauchy hypersurfaces $\Sigma_{0}, \Sigma_{1}$ in a globally hyperbolic manifold without boundary. 
\end{df}
\begin{nt}
    For the rest of this section (and also later on) we will let $M$ denote a globally hyperbolic manifold with spacelike boundary $\partial M = \Sigma_{0} \sqcup \Sigma_{1}$ and metric $g$. 
\end{nt}
Every such manifold is diffeomorphic to a product
\begin{equation*} 
    M \cong [0, 1] \times \Sigma. 
\end{equation*}
Note however, that this diffeomorphism is generally not an isometry, i.e. the metric is generally not a product metric.

To ensure good behaviour near the boundary, we shall need the following lemma.
\begin{lemma}
\label{lapsef}
    There is a smooth time function $f$ with past directed gradient such that near the past boundary $\Sigma_0$, we have $f(x)= \dist(x,\Sigma_0)$ and near the future boundary $\Sigma_1$, we have $f(x)=1-\dist(x,\Sigma_1)$. Here $\dist$ refers to the Lorentzian distance, i.e. the length of a longest causal curve.
\end{lemma}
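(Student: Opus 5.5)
We construct $f$ by gluing the two distance functions to an arbitrary smooth Cauchy temporal function. Set $\tau_0(x):=\dist(\Sigma_0,x)$, the Lorentzian distance from the past boundary, and $\tau_1(x):=\dist(x,\Sigma_1)$. Note that the gradient of a function increasing to the future is past directed in the signature convention implicit in the statement.

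\emph{Smoothness near the boundary.} $\Sigma_0$ is a compact spacelike hypersurface, so the normal exponential map $(s,y)\mapsto\exp_y(s\nu(y))$ along the future unit normal $\nu$ is a diffeomorphism from $[0,\varepsilon)\times\Sigma_0$ onto a neighbourhood $U_0$ of $\Sigma_0$ for small $\varepsilon>0$. By the Lorentzian Gauss lemma, $\grad s$ is the past directed unit timelike vector field $-\partial_s$, so $g(\grad s,\grad s)=-1$. The key claim is that $s=\tau_0$ on $U_0$, perhaps after shrinking $\varepsilon$. The inequality $\tau_0\ge s$ holds because the normal geodesic is a causal curve of length $s$. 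For the reverse inequality, take any causal curve $c$ from $\Sigma_0$ to $x\in U_0$. Either it stays in $U_0$, where $s$ has unit timelike gradient and the reverse Cauchy--Schwarz inequality gives $L(c)\le s(x)$, or it leaves $U_0$. In the latter case, global hyperbolicity and compactness of $M$ mean that causal curves in $M$ have uniformly bounded length, and I would choose $\varepsilon$ small enough that curves leaving the collar cannot re-enter near $\Sigma_0$ while gaining length. I expect this to be the main technical step. The cleanest route is probably to use that $J^+(\Sigma_0)\cap J^-(\{s\le\varepsilon\})$ lies in the collar: a causal curve ending in the collar stays in $J^-$ of its endpoint, and for small $\varepsilon$ this set is contained in $U_0$ by compactness. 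The same argument gives a collar $U_1$ of $\Sigma_1$ on which $1-\tau_1$ is smooth with unit past directed timelike gradient. Rescaling time, we may assume $\dist(\Sigma_0,\Sigma_1)>1$, and $\varepsilon<1/3$ makes the collars disjoint.

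\emph{Gluing.} By Bernal--Sánchez, there is a smooth Cauchy temporal function $h$ on $M$ with past directed timelike gradient, $h=0$ on $\Sigma_0$ and $h=1$ on $\Sigma_1$. Choose a smooth cutoff $\chi_0$ that is $1$ on $\{\tau_0\le\varepsilon/3\}$ and supported in $\{\tau_0<2\varepsilon/3\}$, with $\chi_0$ a function of $\tau_0$. Choose $\chi_1$ analogously near $\Sigma_1$. Set
\[
f:=\chi_0\,\tau_0+\chi_1\,(1-\tau_1)+(1-\chi_0-\chi_1)\,\tilde h,
\]
where $\tilde h$ is a reparametrisation $\phi\circ h$ with $\phi'>0$. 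Convex combinations of functions with past directed timelike gradients have past directed timelike gradients, since the past cone is convex. The terms $\tau_0\,\grad\chi_0+\dots$ arising from derivatives of the cutoffs are also causal and past directed provided the cutoffs are monotone in the correct direction and the combined function values are ordered compatibly. To arrange this, I would choose $\phi$ so that $\tilde h<\tau_0$ on the support of $\grad\chi_0$ and $\tilde h>1-\tau_1$ on the support of $\grad\chi_1$. This is possible because $h$ and $\tau_0$ both vanish exactly on $\Sigma_0$ and are comparable on the compact collar.

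Consequently $\grad f$ is past directed timelike everywhere, so $f$ is a time function. Moreover $f=\tau_0$ near $\Sigma_0$ and $f=1-\tau_1$ near $\Sigma_1$, which is exactly the statement of Lemma~\ref{lapsef}.
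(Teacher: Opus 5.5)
Your overall plan (distance functions on collars of $\Sigma_0$ and $\Sigma_1$, glued to a Cauchy temporal function in the middle) is the same as the paper's. Your identification of $\dist(\cdot,\Sigma_0)$ with the normal-exponential coordinate $s$ near $\Sigma_0$ is more careful than the paper, which simply asserts that the distance functions are smooth near the boundary. The gluing step, however, is wrong as stated, because your ordering conditions on $\tilde h$ are reversed. The cutoff derivatives contribute
\[
(\tau_0-\tilde h)\grad\chi_0+(1-\tau_1-\tilde h)\grad\chi_1 .
\]
Since $\chi_0$ is a decreasing function of $\tau_0$ and $\grad\tau_0$ is past directed, $\grad\chi_0$ is \emph{future} directed or zero. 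So the first term is past directed only if $\tilde h\ge\tau_0$ on $\operatorname{supp}\grad\chi_0$. Since $\chi_1$ increases towards the future, $\grad\chi_1$ is past directed, and you need $\tilde h\le 1-\tau_1$ on $\operatorname{supp}\grad\chi_1$. Intuitively, moving to the future transfers weight from $\tau_0$ to $\tilde h$, which only helps monotonicity if $\tilde h$ is the larger of the two.

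Choices satisfying your conditions can genuinely fail. Take a product slab $[0,1]\times\Sigma$ with $g=-dt^2+g_\Sigma$ and $h=t=\tau_0$. Let $\tilde h$ be an S-shaped reparametrisation of $h$ that is almost $0$ on the lower collar and almost $1$ on the upper one, which is exactly what your two conditions call for. Then $f=\varepsilon/3$ at $t=\varepsilon/3$, but $f=\tilde h<\varepsilon/3$ at $t=2\varepsilon/3$, so $f$ is not increasing along the time lines. The repair is easy: take e.g.\ $\tilde h=\tfrac12+\delta(h-\tfrac12)$ with small $\delta>0$ and $\varepsilon<\tfrac13$. Then $\tilde h\approx\tfrac12$ lies above $\tau_0\le 2\varepsilon/3$ and below $1-\tau_1\ge 1-2\varepsilon/3$ on the transition regions, and your convexity argument goes through. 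The paper avoids cross terms altogether by writing $f$ as a \emph{sum} $\chi_0\circ h_0+\chi_1\circ h_1+\chi_2\circ h_2$ of nondecreasing reparametrisations, each with past causal or vanishing gradient, instead of using a partition of unity.

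Separately, the step ``rescaling time, we may assume $\dist(\Sigma_0,\Sigma_1)>1$'' is not allowed. The statement fixes the metric and the normalisation $f=\dist(\cdot,\Sigma_0)$, $f=1-\dist(\cdot,\Sigma_1)$; rescaling $g$ changes both distances, and afterwards the constants $0$ and $1$ cannot both be restored. It also would not give disjoint collars: $\dist(\Sigma_0,\Sigma_1)$ is a supremum and the slab may be thin somewhere, so a point can be within $1/3$ of both boundary components. Simply drop this step. The functions $\tau_0$ and $\tau_1$ are continuous and vanish on $M$ exactly on $\Sigma_0$ resp.\ $\Sigma_1$, so by compactness $\{\tau_0\le\varepsilon\}$ and $\{\tau_1\le\varepsilon\}$ are disjoint for small $\varepsilon$. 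No lower bound on the distance between the boundary components is needed, since $\grad f$ may be as large as necessary in the middle.
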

\begin{proof}
    Let $h_1$ be any smooth time function on $M$ with timelike gradient, $h_1(\Sigma_0)=\{0\}$ and $h_1(\Sigma_1)=\{1\}$ (e.g. project onto the time component in a smooth splitting). Define $h_0(x):=\dist(x,\Sigma_0)$ and $h_2:=1-\dist(x,\Sigma_1)$. Choose $\chi_0,\chi_1,\chi_2\colon [0,1]\rightarrow [0,1]$ smooth and $0<\epsilon<\frac{1}{6}$ such that
    \begin{itemize}
        \item While $h_1(t)<2\epsilon$, $h_0(t)$ is smooth with timelike gradient.
        \item While $h_1(t)>1-2\epsilon$, $h_2(t)$ is smooth with timelike gradient.
        \item All $\chi_i'\geq 0$.
        \item $\chi_0$ is the identity on $[0,\epsilon]$, has positive derivative on $[0,2\epsilon]$ and is constantly $\frac{1}{3}$ around $[3\epsilon,1]$.
        \item $\chi_2(t)=t-\frac{2}{3}$ for $t\geq1-\epsilon$, moreover $\chi_2$ has positive derivative on $[1-2\epsilon,1]$ and vanishes around $[0,1-3\epsilon]$.
        \item $\chi_1$ has positive derivative on $[2\epsilon,1-2\epsilon]$, vanishes around $[0,\epsilon]$ and is constantly $\frac{1}{3}$ around $[1-\epsilon,1]$.
    \end{itemize}
    Note that at any $t\in[0,1]$, some $\chi_i'(t)$ is strictly positive.

    Define
    \[f:=\chi_0\circ h_0 +\chi_1\circ h_1+\chi_2\circ h_2.\]
    As every summand has past timelike or zero gradient and at any point at least one summand has non-zero gradient, $f$ has timelike gradient. Moreover, on $h_1^{-1}([0,\epsilon])$, we have
    \[f=\chi_0\circ h_0+0=\dist(\cdot,\Sigma_0)\]
    and on $h_1^{-1}([1-\epsilon,1])$, we have
    \[f(x)=\frac{1}{3}+\frac{1}{3}+\chi_2(1-\dist(x,\Sigma_1))=1-\dist(x,\Sigma_1).\]
\end{proof}
\begin{prop}
\label{Gsplit}
        Let $\Gamma$ be a compact Hausdorff topological group acting by timeorientation preserving isometries on $M$.
        Then there is a $\Gamma$-manifold $\Sigma$ and a diffeomorphism
        \[\phi \colon [0,1]\times \Sigma\rightarrow M \]
        such that for any $\gamma\in \Gamma$, $x'\in \Sigma$, $t\in [0,1]$, we have
        \[\phi(t,\gamma.x')=\gamma\phi(t,x').\]
        This can be chosen such that every $\Sigma_t:=\phi(\{t\}\times\Sigma)$ is a Cauchy hypersurface and the pull-back of the metric takes the form
        \[\phi^*g=-N^2dt^2+g_t\]
        for a family $g_t$ of Riemannian metrics on $M$ and a smooth function $N$ that is constantly $1$ near the boundary.     
    \end{prop}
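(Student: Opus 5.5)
We first produce a $\Gamma$-invariant time function with the boundary behaviour of Lemma~\ref{lapsef}, then flow along its normalised gradient. Let $f$ be the time function from Lemma~\ref{lapsef}.

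\textbf{Step 1: averaging.} Since $\Gamma$ acts by isometries preserving time orientation, it maps $\Sigma_0$ and $\Sigma_1$ to themselves: an isometry maps Cauchy hypersurfaces to Cauchy hypersurfaces and preserves the past and future boundary components of $M$. Hence $\dist(\gamma x,\Sigma_i)=\dist(x,\Sigma_i)$. Define
\[\tilde f(x):=\int_\Gamma f(\gamma x)\,d\mu(\gamma)\]
with normalised Haar measure $\mu$. This is $\Gamma$-invariant.

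Near $\Sigma_0$ every translate $f\circ\gamma$ equals $\dist(\cdot,\Sigma_0)$. This uses a uniform neighbourhood $\{\dist(\cdot,\Sigma_0)<\delta\}$, which is $\Gamma$-invariant; to get it, I would choose $\epsilon$ in the lemma in terms of $\dist$ rather than $h_1$, or simply note that $f=\dist$ on some set of the form $\{\dist<\delta\}$ by compactness. So $\tilde f=\dist(\cdot,\Sigma_0)$ there, and similarly near $\Sigma_1$.

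Smoothness of $\tilde f$ follows by differentiating under the integral. This needs the action to be smooth in $x$ with derivatives continuous in $\gamma$; isometries of a compact Lorentzian manifold form a Lie group acting smoothly (Myers–Steenrod type results hold for the Lorentzian isometry group), and $\Gamma$ acts through a compact subgroup of it. The gradient of $\tilde f$ is the average of the vectors $d\gamma^{-1}(\nabla f(\gamma x))$, all of which are past-directed timelike. Since the past timelike cone is convex, the average is again past timelike. This convexity argument is the key point.

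\textbf{Step 2: flow.} Let $\Sigma:=\Sigma_0$ with its induced $\Gamma$-action. Define the vector field
\[X:=-\nabla\tilde f/|\nabla\tilde f|^2_g,\]
which is future timelike with $X\tilde f=1$, and $\Gamma$-invariant because $\tilde f$ is invariant and $\Gamma$ acts isometrically. Let $\phi(t,x')$ be the flow of $X$ starting at $x'\in\Sigma_0$.

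Every inextendible integral curve of $X$ is a timelike curve, so it meets $\Sigma_0$ exactly once. Along it $\tilde f$ increases at unit rate from $0$ to $1$, so the curve reaches $\Sigma_1$ at time $1$. Hence $\phi$ is a diffeomorphism $[0,1]\times\Sigma\to M$, and $\phi(\{t\}\times\Sigma)=\tilde f^{-1}(t)$. These level sets are spacelike and are Cauchy hypersurfaces, being level sets of a time function in the globally hyperbolic region. Invariance of $X$ gives $\phi(t,\gamma x')=\gamma\phi(t,x')$.

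\textbf{Step 3: the metric.} Since $X$ is orthogonal to the level sets, the pull-back metric has no cross terms and equals $-N^2dt^2+g_t$ with
\[N=|X|=1/|\nabla\tilde f|.\]
Near the boundary $\tilde f$ is the Lorentzian distance to $\Sigma_0$ (resp.\ $1$ minus the distance to $\Sigma_1$), whose gradient has unit length (a standard property of the distance function where it is smooth). Hence $N=1$ there.

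The main obstacle is Step 1: making the "near the boundary" neighbourhoods uniform in $\gamma$ and justifying smoothness of the averaged function. Timelikeness of the averaged gradient is then immediate from convexity of the cone.
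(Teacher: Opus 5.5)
Your proposal is correct and follows essentially the same route as the paper: average the time function of Lemma~\ref{lapsef} over $\Gamma$ with Haar measure, use convexity of the past timelike cone to keep the gradient timelike, flow along the normalised gradient starting from $\Sigma_0$, and use $\Gamma$-invariance of the Lorentzian distance to the boundary to get $N=1$ there. You also handle explicitly two points the paper's proof passes over silently: the need for a $\Gamma$-invariant neighbourhood $\{\dist(\cdot,\Sigma_0)<\delta\}$ on which $f$ equals the distance function, and the smoothness of the averaged function.
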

    \begin{rem}
        The key step of the proof, obtaining an equivariant time function, was already done in \cite[Theorem 1]{Mue}. However, we repeat it here as it is fairly short and elegant.
    \end{rem}
    \begin{proof}
        Choose a time function $f$ as in Lemma \ref{lapsef}. Define 
        \[f_\Gamma(x):=\int\limits_\Gamma f(\gamma^{-1}(x))dH(\gamma),\]
        where $H$ denotes the normalised left Haar measure of $\Gamma$. 
        
        As the measure is translation invariant, we have
        \[f_\Gamma(hx)=\int\limits_\Gamma f((h^{-1}\gamma)^{-1}(x))dH(\gamma)=\int\limits_\Gamma f(\gamma^{-1}(x))dH(\gamma)=f_\Gamma(x).\]
        Thus all slices $f_{\Gamma}^{-1}(t)$ are $\Gamma$ invariant.
        
        As $\grad f$ is past timelike and $d\gamma$ fixes the positive timecone, $\gamma_*\grad(f)$ is always past timelike. Thus
        \begin{align*}
            \grad(f_\Gamma)(x) = \int\limits _\Gamma\grad(f\circ{\gamma^{-1}})(x)dH(\gamma)=\int\limits_\Gamma \gamma_*\grad(f)(x)dH(\gamma)
        \end{align*}
        is past timelike, so $f_\Gamma$ is again a Cauchy time function with timelike gradient.
        
        As $f$ (and the metric)is $\Gamma$-equivariant, so is its $\grad f$ and thus also the vector
        \[V:=\frac{\grad(f_\Gamma)}{\|\grad{f_\Gamma}\|^2}\]
        (for Lorentzian "norm squared").

        Let $\phi_t$ denote the flow induced by $V$, i.e. $\partial_t\phi_t=V\circ\phi_t$, $\phi_0=id$. 
        As  $V$ is equivariant under the group action, so is $\phi$. Moreover,
        \[\partial_t f_\Gamma(\phi_t(x))=\<\grad(f_\Gamma),V \>=1,\]
        so $\phi_t$ maps $f_\Gamma^{-1}(s)$ to $f_\Gamma^{-1}(s+t)$ for any $s\in t$.
        Define $\Sigma_t:=f_\Gamma^{-1}(t)$, $\Sigma:=\Sigma_0$.
        As flow curves can only "enter" or "leave" M in (i.e. stay well defined until hitting) $\Sigma_0$ resp. $\Sigma_1$, we get a well-defined diffeomorphism
        \begin{align*}
            \phi\colon [0,1]\times \Sigma&\to M\\
            (t,x')&\mapsto \phi_t(x')
        \end{align*}
        with inverse
        \begin{align*}
            \psi\colon M&\to [0,1]\times \Sigma\\
            x&\mapsto(\phi_{-f_\Gamma(x')}(x'),f_\Gamma(x')).
        \end{align*}
        As $\phi_t$ commutes with the group action, we have for any $\gamma \in \Gamma$
        \[\gamma \phi(t,x')=\phi(t, \gamma x').\]
        For any inextendible causal curve $\gamma$, $f_\Gamma\circ\gamma$ is a strictly increasing bijection of $[0,1]$ to itself, so $\gamma$ must intersect each $\Sigma_t$ exactly once. Moreover, as the gradient of $f_\Gamma$ is perpendicular to its level sets, each flow curve intersects each $\Sigma_t$ orthogonally. This means that the metric has no off-diagonal terms, i.e. takes the form 
        \[\phi^*g=-Ndt^2+g_t.\]
        It remains to show that $N$ is 1 near the boundary. Here we use that isometries must preserve the distance from the boundary. Near $\Sigma_0$, we have
        \begin{align*}
            f_\Gamma(x)&=\int\limits_\Gamma h^*f(x)dH(h)\\
            &=\int\limits_\Gamma h^*\dist(x,\Sigma_0)dH(h)\\
            &=\int\limits_\Gamma \dist(x,\Sigma_0)dH(h)\\
            &=\dist(x,\Sigma_0).
        \end{align*}
        In particular, the gradient of $f_\Gamma$ has unit length near $\Sigma_0$. In the same way, we can show that it has unit length near $\Sigma_1$. 
        Thus
        \begin{align*}
            N(x)^2&=-\phi^*\gamma(\partial_t,\partial_t)(x)\\
            &=-\gamma(\partial_t\phi_t(x),\partial_t\phi_t(x))\\
            &=-\gamma(V(x),V(x))\\
            &=-\frac{1}{\|\grad(f_\Gamma)(x)\|^2}\\
            &=1,
        \end{align*}
        which shows that $N$ is $1$ near the boundary.
    \end{proof}
    The above proof also works for spacetimes without boundary. The key assumption is the compactness of $\Gamma$, which rules out time translations. In our setting, however, we will have a compact manifold (which requires a boundary in the globally hyperbolic case) and this already forces the action to factor through a compact group.
    \begin{prop}
    \label{Icomp}
        If $M$ is compact, the group of isometries on $M$ is compact (with the compact-open topology).
    \end{prop}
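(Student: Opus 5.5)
The idea is to reduce to the Riemannian fact that the isometry group of a compact Riemannian manifold is compact (Myers–Steenrod, or directly Arzelà–Ascoli). The difficulty is that $g$ is Lorentzian, so we first build an auxiliary Riemannian metric $\hat g$ on $M$ that every isometry of $g$ preserves. That metric must be constructed without averaging over the group, since compactness of the group is exactly what we are trying to prove.

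\textbf{Step 1: the Lorentzian distance functions.} Every isometry $\gamma$ of $M$ preserves the boundary $\partial M=\Sigma_0\sqcup\Sigma_1$. Assume first that $\gamma$ is time-orientation preserving, so it maps $\Sigma_0$ to $\Sigma_0$ and $\Sigma_1$ to $\Sigma_1$. It follows that
\[
\dist(\gamma x,\Sigma_0)=\dist(x,\Sigma_0),\qquad \dist(\gamma x,\Sigma_1)=\dist(x,\Sigma_1),
\]
as already used in the proof of Proposition~\ref{Gsplit}. Near each boundary component these functions are smooth, and their gradients $U_0,U_1$ are unit timelike vector fields, so $\gamma_*U_i=U_i$ there. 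If time-orientation reversing isometries are allowed, they form at most an index-two coset, and the argument below applies to the subgroup of index at most two.

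\textbf{Step 2: an invariant timelike vector field on all of $M$.} Near the boundary we use the Riemannian metric
\[
\hat g=g+2U^\flat\otimes U^\flat,
\]
which is invariant. In the interior we need an invariant timelike field that does not depend on a group average. The cleanest choice is the gradient of the \emph{cosmological time}-type function
\[
\tau(x)=\dist(x,\Sigma_0)-\dist(x,\Sigma_1),
\]
or, more robustly, a different approach altogether, which we now adopt.

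\textbf{Step 2$'$: work on the orthonormal frame bundle.} The isometry group acts freely on the bundle $F$ of $g$-orthonormal, time-oriented frames. This action preserves the canonical parallelism on $F$, given by the Levi-Civita connection form together with the solder form. The parallelism defines a Riemannian metric $\hat g_F$ on $F$, and isometries of $g$ act by $\hat g_F$-isometries. These maps also commute with the right $SO^+(1,n-1)$ action and are determined by the image of one frame in each component.

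Here $F$ is not compact, since $SO^+(1,n-1)$ is noncompact. So we restrict to the sub-bundle $F_0\subset F|_{\Sigma_0}$ of frames whose first vector is the unit normal $\nu(0)$. Isometries preserve $\Sigma_0$ and $\nu(0)$, hence preserve $F_0$, and $F_0$ is compact.

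\textbf{Step 3: compactness via Arzelà–Ascoli.} Take a sequence $\gamma_k$ of isometries and fix a frame $e\in F_0$. Since $F_0$ is compact, we may pass to a subsequence with $\gamma_k e\to e'$. An isometry of a connected Lorentzian manifold is determined by its value and differential at one point, through its action on geodesics or on the parallelism. Moreover, the $\gamma_k$ are $\hat g_F$-isometries, so the family is equicontinuous on $F$, and hence $d\gamma_k$ and $\gamma_k$ are equicontinuous on $M$. By Arzelà–Ascoli, and treating each of the finitely many components separately, a further subsequence converges uniformly, together with first derivatives, to a map $\gamma_\infty$ with $\gamma_\infty^*g=g$. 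The same reasoning applied to $\gamma_k^{-1}$ shows that $\gamma_\infty$ is invertible, so it is an isometry. Closedness in the compact-open topology then gives compactness.

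\textbf{Main obstacle.} The hardest step is equicontinuity on $M$ itself. Each point of $M$ must be reached from $F_0$ by flows that are intertwined by isometries, namely the flows of the standard horizontal fields of the parallelism, with parameters in a compact set. This should hold because $M$ is compact and every point lies on a geodesic of bounded parameter length issuing from $\Sigma_0$ in a direction of the frame. I would make this precise using the exponential map restricted to a compact set of initial vectors.
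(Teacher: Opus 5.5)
Your proposal has a genuine gap, and it sits exactly at the step you call the main obstacle. In Step 3 you argue that the $\gamma_k$ are $\hat g_F$-isometries, hence equicontinuous on $F$, ``and hence'' $\gamma_k$ and $d\gamma_k$ are equicontinuous on $M$. This does not follow, for three reasons:
\begin{enumerate}
\item $F$ is not compact.
\item The projection $F\to M$ is not uniformly continuous for $\hat g_F$: over strongly boosted frames, a $\hat g_F$-short horizontal vector projects to a long vector.
\item Knowing $\gamma_k e\in F_0$ for one frame gives no control of $\gamma_k p$ for frames $p$ over interior points, since you give no reason why $\hat g_F$-bounded sets should be relatively compact.
\end{enumerate}
A counterexample shows that this implication needs real extra input. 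Take the compact flat Lorentzian torus $\R^2/\Z^2$ with the quadratic form preserved by a hyperbolic matrix in $SL(2,\Z)$. The iterates of that matrix are isometries, hence $\hat g_F$-isometries, yet they are not equicontinuous on the torus, and the isometry group is not compact. So the whole content of the proposition is the control of $\gamma_k$ away from $\Sigma_0$, which you leave as something that ``should hold because $M$ is compact''.

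Compactness is not the reason; global hyperbolicity is. For each $x\in M$ there is a longest causal curve from $x$ to $\Sigma_0$, since $J^-(x)\cap J^+(\Sigma_0)$ is compact. This curve is a geodesic, and by the first variation formula it meets $\Sigma_0$ orthogonally. Hence $x=\exp_y(\lambda\nu(y))$ with $y\in\Sigma_0$ and $|\lambda|\le\max_M\dist(\cdot,\Sigma_0)$. Every time-orientation preserving isometry then satisfies $\gamma(\exp_y(\lambda\nu(y)))=\exp_{\gamma y}(\lambda\nu(\gamma y))$.

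This normal-geodesic representation is the key idea of the paper's proof. Once you have it, the frame bundle and $\hat g_F$ are superfluous, as are Steps 1--2, which you abandon anyway. Everything reduces to uniform convergence of a subsequence of $\gamma_k|_{\Sigma_0}$.
\begin{enumerate}
\item Convergence on $\Sigma_0$: the paper takes this from compactness of the isometry group of the compact Riemannian manifold $\Sigma_0$. You could instead derive it from your single frame $e$, using $\gamma_k(\exp^{\Sigma_0}_{y_0}v)=\exp^{\Sigma_0}_{\gamma_k y_0}(d\gamma_k v)$ and Hopf--Rinow on $\Sigma_0$. That is a legitimate substitute for the citation. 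Do not use the horizontal flows of the parallelism of $M$ here, since $M$-geodesics tangent to $\Sigma_0$ may leave $M$.
\item Uniform convergence on all of $M$: this follows from continuous dependence of the normal geodesics on their initial point.
\item Convergence of differentials: this follows from $d_x\gamma_k(v)=\exp^{-1}_{\gamma_k x}\gamma_k(\exp_x v)$ for small $v$.
\end{enumerate}
Your handling of inverses, of time-orientation reversing isometries, and of closedness then completes the argument.
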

    \begin{proof}
    We can assume without loss of generality that $M$ is connected, otherwise isometries are given by an isometry on each component. It also suffices to show this for timeorientation preserving isometries, as composition with any fixed time-reversing isometry gives a homeomorphism between timeorientation preserving and reversing ones.

    We use that the group of isometries of the past boundary $\Sigma_0$ is compact (\cite[Theorem I']{DvW}) (with respect to the topology induced by the distance, which coincides with the compact-open topology). We will show that an isometry is uniquely determined by its restriction to the past boundary $\Sigma_0$ and that the extension process is continuous. For metrizing the topology of $\Isom(M)$, we choose an arbitrary Riemannian metric on $M$ with respect to which we measure the distance of two isometries (the induced topology will again be the compact-open topology). For everything else, we still use the Lorentzian metric.

    Let $\phi$ be an isometry and $x\in M$. There must be a longest causal curve from $x$ to $\Sigma_0$ that is a geodesic (this follows e.g. from \cite[Proposition 14.19 and Lemma 14.21]{ONE}). This must hit $\Sigma_0$ orthogonally, otherwise there would be a longer curve. Let $y$ be the point where this geodesic hits $\Sigma_0$. Then we can write
    \[x=\exp_y(\lambda\nu(y)),\]
    where $\nu(y)$ denotes the past unit normal to $\Sigma_0$ at $y$ and $\lambda\in \R$. As $\phi$ is a timeorientation preserving isometry, it must map $\Sigma_0$ to $\Sigma_0$ and past normal vectors of $\Sigma_0$ to past normal vectors of $\Sigma_0$. Moreover, it preserves geodesics and the exponential function.
    Thus we have
    \[\phi(x)=\exp_{\phi(y)}(\lambda\nu(\phi(y))).\]
    Now let instead $\phi_n$ be a sequence of isometries. As $\Isom(\Sigma_0)$ is compact, the sequence $\phi_n|_{\Sigma_0}$ has a subsequence converging uniformly to a limit $\phi_0$. Without loss of generality, we assume that the whole sequence converges. Define (choosing values of $y$ and $\lambda$ as above for any value of $x$)
    \[\phi(x):=\exp_{\phi_0(y)}(\lambda\nu(\phi_0(y))).\]
    As the map $z\mapsto \exp_z(\lambda\nu(z))$ is smooth and thus in particular Lipschitz, we obtain that
    \[\phi_n(x)=\exp_{\phi_n(y)}(\lambda\nu(\phi_n(y)))\]
    converges uniformly to $\phi(x)$.
    It remains to show that $\phi$ is an isometry. The differential of $\phi_n$ at $x$ is (using again that $\phi$ preserves the exponential map and assuming that $v$ is small enough so the following is defined)
    \[d_x\phi_n(v)=\exp_{\phi_n(x)}^{-1}\phi_n(\exp_x(v)).\]
    The right hand side converges uniformly, so the $\phi_n$ converge in $C^1$. This shows that the limit $\phi$ must also be $C^1$ with \[d\phi=\lim\limits_{n\rightarrow\infty}d\phi_n.\]
    As all $d_x\phi_n$ are Lorentz isometries, so is $d_x\phi$ for any $x\in M$, i.e. $\phi\in\Isom(M)$.
    Thus we have shown sequential compactness, which, for metric spaces, is equivalent to compactness.
    \end{proof}
    Combining this with what we have shown before, we obtain
    \begin{cor}
    \label{Isplit}
    Every compact globally hyperbolic manifold with spacelike boundary is isometric to a manifold $[0,1]\times \Sigma$ with metric 
    \[-N^2dt^2+g_t\]
    such that $N$ is $1$ near the boundary and every time-oriented isometry of $[0,1]\times \Sigma$ to itself is constant in time, i.e. of the form $\phi(t,x)=(t,\tilde\phi(x))$ for some $\tilde \phi$ on $\Sigma$.
    \end{cor}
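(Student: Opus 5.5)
The plan is to apply Proposition~\ref{Gsplit} to the group $\Gamma:=\Isom^{\uparrow}(M)$ of time-orientation preserving isometries of $M$. Compactness of this group comes from Proposition~\ref{Icomp}. As shown there, the full isometry group of the compact manifold $M$ is compact in the compact-open topology. The time-orientation preserving isometries form a subgroup of index at most two on each component, and it is closed: time orientation is detected by the sign of $g(\rd\phi(T),T)$ for a fixed timelike field $T$, which is continuous under the $C^1$-convergence established in that proof. A closed subgroup of a compact group is compact, and $\Gamma$ is Hausdorff because the compact-open topology on maps into a metrizable space is Hausdorff. It acts on $M$ by time-orientation preserving isometries, so the hypotheses of Proposition~\ref{Gsplit} hold.

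Proposition~\ref{Gsplit} then produces a $\Gamma$-manifold $\Sigma$ and a diffeomorphism $\phi\colon[0,1]\times\Sigma\to M$. It satisfies $\phi(t,\gamma.x')=\gamma\phi(t,x')$, and $\phi^*g=-N^2\rd t^2+g_t$ with $N\equiv1$ near the boundary. Equip $[0,1]\times\Sigma$ with the metric $\phi^*g$; then $\phi$ is an isometry onto $M$, which gives the first assertion.

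For the second assertion, let $\psi$ be a time-oriented isometry of $([0,1]\times\Sigma,\phi^*g)$. Then $\gamma:=\phi\circ\psi\circ\phi^{-1}$ is a time-orientation preserving isometry of $M$, so $\gamma\in\Gamma$. Define $\tilde\psi\colon\Sigma\to\Sigma$ by $x'\mapsto\gamma.x'$, using the $\Gamma$-action on $\Sigma$. The equivariance from Proposition~\ref{Gsplit} gives
\[
\psi(t,x')=\phi^{-1}\bigl(\gamma\,\phi(t,x')\bigr)=\phi^{-1}\bigl(\phi(t,\gamma.x')\bigr)=(t,\tilde\psi(x')),
\]
so $\psi$ is constant in time.

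The only delicate point is the closedness and compactness of the time-orientation preserving subgroup. If the topological argument above is awkward, there is a fallback. The proof of Proposition~\ref{Icomp} shows that sequential limits of time-orientation preserving isometries are again isometries obtained by the same exponential-map construction from the past boundary $\Sigma_0$ to itself. Such limits therefore preserve the future/past boundary labels, and hence the time orientation, which gives sequential compactness of $\Gamma$ directly. Everything else follows directly from the two propositions.
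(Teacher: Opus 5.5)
Your proposal is correct and follows the paper's proof: apply Proposition~\ref{Gsplit} to the group of time-orientation preserving isometries, whose compactness comes from Proposition~\ref{Icomp}, and make $\phi$ an isometry by pulling back the metric. You also fill in two steps the paper leaves implicit: that this subgroup is closed (hence compact and Hausdorff), and the conjugation $\gamma=\phi\circ\psi\circ\phi^{-1}$, which together with equivariance gives $\psi(t,x')=(t,\gamma.x')$.
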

    \begin{proof}
        Apply Proposition \ref{Gsplit} with $\Gamma$ the group of time-oriented isometries. Proposition \ref{Icomp} ensures that this choice of $\Gamma$ is admissible. As the metric on the product, take the pull-back under the constructed diffeomorphism, turning it into an isometry.
    \end{proof}
\section{Group index and group spectral flow}
\label{sec: G_ind_G_sf}
\subsection{Spectral subspaces and equivariant operators}
Our approach to dealing with the equivariant index and spectral flow will rely heavily on spectral decompositions. 
\begin{df}
    For any Borel set $I\subset\C$, and any normal operator $S$, let $P_I(S)$ denote the corresponding spectral projection, i.e. the characteristic function of $I$ applied to $S$ via functional calculus. Let $E_I(S)$ denote the range of $P_I(S)$. For $I=\{\lambda\}$, we write $E_\lambda(S)$ and $P_\lambda(S)$ instead. We write $\Eig(S)$ for the set of eigenvalues of $S$.
    Let $S_{X\to Y}$ denote the restriction of $S$ to a subspace $X$, with codomain restricted to a subspace $Y$.
\end{df}
\begin{nt}
    Throughout this section, we consider the following setting: Let $\Gamma$ be a group. Let $H_1$ and $H_2$ be Hilbert spaces with a unitary $\Gamma$-action. Let $\gamma\in \Gamma$ and let $\gamma_1$ and $\gamma_2$ denote the action of $\gamma$ on $H_1$ resp. $H_2$. Let $T\colon H_1\to H_2$ be a (possibly unbounded) operator that is $\gamma$-equivariant, i.e. $\gamma_1$ and $\gamma_1^{-1}$ fix the domain of $T$ and $T\gamma_1=\gamma_2T$.
\end{nt}
In our case, the relevant spectral subspaces will be finite-dimensional and thus direct sums of eigenspaces.

If two operators commute, they preserve each others spectral subspaces. For maps between different spaces, there is the following analogue. 
Note that this holds symmetrically for $T^*$:
\begin{lemma}
    \label{interT}
    \begin{enumerate}
        \item $T$ preserves spectral subspaces, i.e. it maps $E_I(\gamma_1)$ to $E_I(\gamma_2)$ for any Borel set $I$.
        \item $\gamma_1$ preserves $\Ker(T)$.
    \end{enumerate}
\end{lemma}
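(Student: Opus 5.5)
Part (2) is immediate, so I will dispose of it first. If $Tx=0$ with $x$ in the domain, then $\gamma_1 x$ lies in the domain because $\gamma_1$ fixes the domain. The intertwining relation gives $T\gamma_1 x=\gamma_2 Tx=0$, so $\gamma_1$ maps $\Ker(T)$ into itself. The same argument with $\gamma^{-1}$ shows that $\gamma_1^{-1}$ also preserves $\Ker(T)$. Hence $\gamma_1$ restricts to a bijection of $\Ker(T)$.

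For part (1), the plan is to upgrade the intertwining relation $T\gamma_1=\gamma_2T$ from the unitaries themselves to their Borel functional calculus. Then I apply it to the indicator function $\chi_I$, which gives
\[
TP_I(\gamma_1)\supseteq P_I(\gamma_2)T
\]
on the domain of $T$. The conclusion follows from this: if $x\in E_I(\gamma_1)\cap\mathrm{dom}(T)$, then $x=P_I(\gamma_1)x$, so
\[
Tx=TP_I(\gamma_1)x=P_I(\gamma_2)Tx\in E_I(\gamma_2).
\]

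To carry out the upgrade I would proceed in three steps.

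\emph{Step 1.} From $T\gamma_1=\gamma_2T$ and the fact that $\gamma_1^{-1}$ fixes the domain, I get $T\gamma_1^{-1}=\gamma_2^{-1}T$. Since $\gamma_i^{-1}=\gamma_i^*$ for unitaries, $T$ intertwines every polynomial in $\gamma_i$ and $\gamma_i^*$. By Stone--Weierstrass on the spectrum, which is a subset of the unit circle, these are exactly the trigonometric polynomials $p$, and for them $Tp(\gamma_1)=p(\gamma_2)T$ on $\mathrm{dom}(T)$.

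\emph{Step 2.} I pass to bounded Borel functions using an adjoint formulation. This is needed because $T$ is unbounded, so limits cannot simply be taken through $T$.

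\emph{Step 3.} I verify the relation for bounded Borel $f$ via a monotone class argument.

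The main obstacle is Step 2, the passage to general Borel functions. If $T$ is closed, I would argue as follows. The identity $Tf(\gamma_1)x=f(\gamma_2)Tx$ holds for continuous $f$ by uniform approximation with trigonometric polynomials: $f_n(\gamma_1)x\to f(\gamma_1)x$ and $Tf_n(\gamma_1)x=f_n(\gamma_2)Tx\to f(\gamma_2)Tx$, and closedness of $T$ then gives the identity in the limit. For Borel $f$ I would replace norm convergence by strong convergence. The class of bounded Borel $f$ for which the identity holds is closed under bounded pointwise limits: the functional calculus converges strongly for such limits, and closedness of $T$ again handles the limit. Since this class contains the continuous functions, it contains all bounded Borel functions, in particular $\chi_I$.

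If $T$ is not assumed closed, I would note that in the paper's applications $T$ is closed (Dirac operators with APS conditions, or their Fredholm closures), so I would invoke closedness there. Alternatively, in the case actually needed, where $E_I$ is finite-dimensional and spanned by eigenvectors, it suffices to handle $I=\{\lambda\}$. That case is elementary: if $\gamma_1x=\lambda x$, then $\gamma_2Tx=T\gamma_1x=\lambda Tx$. A general $x\in E_I(\gamma_1)$ that is a finite sum of eigenvectors in the domain then maps into $E_I(\gamma_2)$ by linearity.

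Finally, the remark that the lemma holds symmetrically for $T^*$ should follow by taking adjoints. Using $\gamma_i^*=\gamma_i^{-1}$, the relation $\gamma_2^{-1}T=T\gamma_1^{-1}$ yields $T^*\gamma_2=\gamma_1T^*$ on $\mathrm{dom}(T^*)$. I would check this by a short computation with inner products, which also shows that $\gamma_2$ preserves $\mathrm{dom}(T^*)$. The arguments above then apply verbatim to $T^*$.
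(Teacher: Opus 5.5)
Part (2) is correct. Your argument for (1) is also correct once $T$ is closed, but it is organised differently from the paper's. The paper does no approximation. It sets $\hat T:=\mat{0}{0}{T}{0}$ on $H_1\oplus H_2$ and $\hat\gamma:=\mat{\gamma_1}{0}{0}{\gamma_2}$, notes $\hat T\hat\gamma=\hat\gamma\hat T$, and quotes the standard fact that commuting operators preserve each other's spectral subspaces. Since $E_I(\hat\gamma)=E_I(\gamma_1)\oplus E_I(\gamma_2)$ and $\Ker\hat T=\Ker T\oplus H_2$, both claims follow at once.

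Your Steps 1--3 (trigonometric polynomials, then uniform limits, then bounded pointwise limits plus the closed graph) amount to a direct proof of that quoted fact in intertwining form, so the $2\times2$ trick is not needed. The paper's route takes two lines. Yours is self-contained, and it exposes the hypothesis the quoted fact needs when $\hat T$ is unbounded: $T$ must be closed (or closable, passing to the closure). The paper leaves this implicit.

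The hypothesis is genuinely necessary, not a convenience. Take $H_1=H_2=L^2(S^1)$ with $\gamma_1=\gamma_2$ multiplication by $z$, and disjoint arcs $I,J$. Use the domain $\{p\chi_I+q\chi_{S^1\setminus I} : p,q \text{ Laurent polynomials}\}$ and set $T(p\chi_I+q\chi_{S^1\setminus I}):=p\chi_J$. This $T$ is well defined and densely defined, its domain is invariant under $\gamma_1^{\pm1}$, and $T\gamma_1=\gamma_2T$. Yet $T\chi_I=\chi_J\notin E_I(\gamma_2)$. So appealing to closedness of the Fredholm and self-adjoint operators to which the lemma is applied is the right fix; for $T^*$, closedness is automatic.

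Your closedness-free fallback only handles finite $I$. There, $\prod_{\mu\in I}(\gamma_1-\mu)x=0$ gives $\prod_{\mu\in I}(\gamma_2-\mu)Tx=0$ directly, without needing the eigencomponents of $x$ to lie in $\mathrm{dom}(T)$. However, the decomposition stated right after the lemma also uses $E_{\C\setminus I}(\gamma_1)$, so it is the closed-graph version that is really required.
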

\begin{proof}
    Observe that $\hat T:=\mat{0}{0}{T}{0}$ commutes with $\hat\gamma:=\mat{\gamma_1}{0}{0}{\gamma_2}$. The claims then follow from the fact that commuting operators preserve each others spectral subspaces.
\end{proof}
In particular, for $I\subseteq \Eig(\gamma_1)$, we have a direct sum decomposition
\[T=\bigoplus\limits_{\lambda\in I}T|_{E_\lambda(\gamma_1)}\oplus T|_{E_{\C\backslash I}(\gamma_1)}.\]
The following Lemma tells us that everything "interesting" will be happening in finitely many eigenspaces of the $\gamma$-action, i.e. we can choose the above decomposition such that nothing happens in the last summand.
\begin{lemma}
    \label{finiteES}
    If $\Ker(T)$ is finite dimensional, then $\Ker(T)$ is contained in the sum of finitely many eigenspaces of $\gamma_1$ and we have
    \[\tr(\gamma_1|_{\Ker(T)})=\sum\limits_{\lambda\in \Eig(\gamma_1)}\lambda\dim(\Ker(T|_{E_\lambda(\gamma_1)})).\]

    If $H_1=H_2$, $T$ is self-adjoint and  $E_I(T)$ is finite dimensional, then $E_I(T)$ is contained in the sum of finitely many eigenspaces of $\gamma_1$ and we have
    \[\tr(\gamma_1|_{E_I(T)})=\sum\limits_{\lambda\in \Eig(\gamma_1)}\lambda\dim(E_I(T|_{E_\lambda(\gamma_1)})).\]
\end{lemma}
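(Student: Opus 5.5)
The plan is to use Lemma~\ref{interT}(2): $\gamma_1$ preserves $\Ker(T)$, and since $\gamma_1$ is unitary, so does $\gamma_1^{-1}=\gamma_1^*$. Hence $\Ker(T)$ is a finite-dimensional invariant subspace, and its orthogonal complement is invariant as well. The restriction $\gamma_1|_{\Ker(T)}$ is then a unitary operator on a finite-dimensional space, so it is diagonalisable with finitely many eigenvalues $\lambda_1,\dots,\lambda_k$. This gives an orthogonal decomposition $\Ker(T)=\bigoplus_j V_j$, where $V_j$ is the $\lambda_j$-eigenspace of $\gamma_1|_{\Ker(T)}$.

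The key identification is
\[V_j=\Ker(T)\cap E_{\lambda_j}(\gamma_1)=\Ker(T|_{E_{\lambda_j}(\gamma_1)}).\]
The first equality holds because a vector of $\Ker(T)$ satisfies $\gamma_1 v=\lambda_j v$ exactly when it lies in $E_{\lambda_j}(\gamma_1)$. The second is the definition of the restricted kernel, where $T|_{E_\lambda(\gamma_1)}$ has domain $\mathrm{dom}(T)\cap E_\lambda(\gamma_1)$; by Lemma~\ref{interT}(1) it maps into $E_\lambda(\gamma_2)$. Consequently $\Ker(T)\subseteq\bigoplus_j E_{\lambda_j}(\gamma_1)$, a finite sum of eigenspaces. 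For any $\lambda\in\Eig(\gamma_1)$ not among the $\lambda_j$, the kernel of $T|_{E_\lambda(\gamma_1)}$ is $\Ker(T)\cap E_\lambda(\gamma_1)=0$, so those terms of the sum vanish. Taking the trace of the diagonal operator $\gamma_1|_{\Ker(T)}=\bigoplus_j\lambda_j\,\mathrm{id}_{V_j}$ then gives $\sum_j\lambda_j\dim V_j$, which is the claimed formula.

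The self-adjoint statement is proved the same way, with $\Ker(T)$ replaced by $E_I(T)$. Since $T$ commutes with the unitary $\gamma_1$, it commutes with its spectral projections: formally, $\gamma_1^{-1}T\gamma_1=T$ and the uniqueness of functional calculus give $\gamma_1P_I(T)=P_I(T)\gamma_1$, which is the commuting-operators fact already used in the proof of Lemma~\ref{interT}. So $E_I(T)$ is a finite-dimensional $\gamma_1$-invariant subspace, and we decompose it as above. It remains to identify $E_I(T)\cap E_\lambda(\gamma_1)$ with $E_I(T|_{E_\lambda(\gamma_1)})$. Because $P_\lambda(\gamma_1)$ commutes with $T$, the space $E_\lambda(\gamma_1)$ reduces $T$, and $T|_{E_\lambda(\gamma_1)}$ is self-adjoint there. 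Its spectral projections are the restrictions $P_I(T)|_{E_\lambda(\gamma_1)}$, as one checks via the resolvent or the uniqueness of the spectral measure. The range of this restriction is $P_I(T)E_\lambda(\gamma_1)=E_I(T)\cap E_\lambda(\gamma_1)$, using that the two projections commute.

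The only delicate points are unbounded-operator bookkeeping: that the restricted operators are well defined with the right domains, and that the spectral projection of the restriction is the restriction of the spectral projection. The latter is the main obstacle. I would handle it by noting that $(T-z)^{-1}$ commutes with $P_\lambda(\gamma_1)$ for $z\notin\R$, so the resolvents of the restriction are restrictions of the resolvents, and then invoking Stone's formula.
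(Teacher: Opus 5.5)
Your proposal is correct and takes the same route as the paper. You restrict the unitary $\gamma_1$ to the finite-dimensional invariant subspace $X=\Ker(T)$ or $X=E_I(T)$, diagonalise it, and identify $E_\lambda(\gamma_1)\cap X$ with the kernel (respectively the spectral subspace) of the restricted operator. The paper settles that last step with a one-line remark that kernels and spectral subspaces are compatible with restriction to closed invariant subspaces; your reducing-subspace and resolvent argument spells out that step rather than departing from it.
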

\begin{proof}
    Let $X$ denote either $\Ker(T)$ or $E_I(T)$. As $X$ is invariant under $\gamma_1$, $\gamma_1|_X$ is a unitary map on a finite-dimensional space and thus has a basis of eigenvectors. As these are also eigenvectors of the unrestricted $\gamma_1$ (and only finitely many), the first part of the claim follows.

    Writing the trace as the sum of the eigenvalues, we get
    \begin{align*}
        \tr(\gamma_1|_{X})&=\sum\limits_{\lambda\in \Eig(\gamma_1|_{X})}\lambda \dim(E_\lambda(\gamma_1|_{X}))\\
        &=\sum\limits_{\lambda\in \Eig(\gamma_1)}\lambda \dim(E_\lambda(\gamma_1)\cap X).\\
    \end{align*}
    As taking kernels or spectral subspaces is compatible with restriction to closed invariant subspaces, this concludes the second part of the claims.
\end{proof}

\subsection{Equivariant index}
\label{sec: g_ind}
There are various equivalent ways of defining a group index for a Fredholm operator that commutes with the action of a group $\Gamma$. The properties of the operator ensure that its kernel and cokernel are finite-dimensional vector spaces preserved by the group action. They can either be viewed as representations of the group (which represent an element of the representation ring) or as modules over the group $C^*$-algebra (which represent an element in the K-theory group $K_0(C^*(\Gamma))$). In either case, one ends up in a setting where one can take a formal difference of the kernel and cokernel. The equivariant index of $T$ is then defined 
as~\cite[p. 519]{Atiyah_AnnMath_1968_I}
\[\ind_\Gamma(T)=[\ker(T)]-[\ker(T^*)].\]
The approach that we will take is to look, rather than at the representations themselves, at the characters associated to them.
The relation between the two is the follows: To a finite-dimensional unitary representation $\pi$ of a group $\Gamma$, on can asociate the character, i.e. the function 
\begin{align*}
    \chi_\pi\colon \Gamma&\to\C\\
    \gamma&\mapsto \tr(\pi(\gamma)).
\end{align*}
This association is invariant under isomorphism and additive under direct sums, so it descends to a map from the representation ring by setting
\[\chi_{[\pi]-[\rho]}(\gamma)=\tr(\pi(\gamma))-\tr(\rho(\gamma)).\]
Moreover, a representation is uniquely (up to unitary equivalence) defined by its character, so this induced map is injective. Thus we can define the index as a function from $\Gamma$ to $\C$, and this will contain the same information as the corresponding element in the representation ring. This definition has the advantage that one does not need to look at the group as a whole, but only has to consider individual elements of $\Gamma$. We thus refrain from venturing into representation theory or K-theory any further and simply use the following definition.

\begin{df} \label{def: g_ind}
    If $T$ is a $\gamma$-equivariant Fredholm operator then one defines 
    \[\ind_\gamma(T):=\tr(\gamma_1|_{\Ker(T)})-\tr(\gamma_2|_{\Ker(T^*)}).\]
\end{df}
Note that in case $\gamma=id$, this coincides with the usual Fredholm index. The group index is thus an extension of the Fredholm index. In order to be able to transfer results from the usual index, we use an eigenspace decomposition for $\gamma$ and exploit that on each eigenspace, the $\gamma$-index is just a multiple of the usual index.

\begin{thm}
\label{inddeco}
        Let $T\colon H_1\rightarrow H_2$ be an equivariant Fredholm operator between Hilbert spaces with an isometric group action. Then 
        \[\ind_\gamma(T)=\sum\limits_{\lambda\in \Eig(\gamma_1)}\lambda\ind(T|_{E_\lambda(\gamma_1)\rightarrow E_\lambda(\gamma_2)}).\]
\end{thm}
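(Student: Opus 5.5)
Apply Lemma~\ref{finiteES} twice, once to $T$ and once to $T^*$, and subtract. Since $T$ is Fredholm, $\Ker(T)$ and $\Ker(T^*)$ are finite dimensional. By Lemma~\ref{interT}, $\gamma_1$ preserves $\Ker(T)$. Likewise $\gamma_2$ preserves $\Ker(T^*)$, because $T^*$ is equivariant as well: from $T\gamma_1=\gamma_2T$ and unitarity we get $\gamma_1^{-1}T^*=T^*\gamma_2^{-1}$, hence $T^*\gamma_2=\gamma_1T^*$, including the domain statement. Lemma~\ref{finiteES} then gives
\[\tr(\gamma_1|_{\Ker T})=\sum_{\lambda}\lambda\dim\Ker(T|_{E_\lambda(\gamma_1)}),\qquad \tr(\gamma_2|_{\Ker T^*})=\sum_{\lambda}\lambda\dim\Ker(T^*|_{E_\lambda(\gamma_2)}),\]
where only finitely many terms are nonzero. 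Here $\lambda$ runs over $\Eig(\gamma_1)$ in the first sum and over $\Eig(\gamma_2)$ in the second. If $\lambda\in\Eig(\gamma_2)\setminus\Eig(\gamma_1)$, then $E_\lambda(\gamma_1)=0$, so the restricted operator is $0\to E_\lambda(\gamma_2)$. Its index is $-\dim E_\lambda(\gamma_2)$, and only the cokernel contributes. The formula should therefore be read with $\lambda$ ranging over $\Eig(\gamma_1)\cup\Eig(\gamma_2)$, or one checks that such cokernels vanish; either way we sum over the union.

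Next, identify the restricted operators. By Lemma~\ref{interT}, $T$ maps $E_\lambda(\gamma_1)\cap\operatorname{dom}T$ into $E_\lambda(\gamma_2)$, and $T^*$ maps back. The spectral projections $P_\lambda(\gamma_i)$ commute with $T$ in the sense that $TP_\lambda(\gamma_1)\subseteq P_\lambda(\gamma_2)T$; this follows from the commutation of $\hat T$ and $\hat\gamma$ used in the proof of Lemma~\ref{interT}. Consequently $T$ is block diagonal with respect to the orthogonal decompositions $H_i=E_\lambda(\gamma_i)\oplus E_\lambda(\gamma_i)^\perp$. It follows that $T_\lambda:=T|_{E_\lambda(\gamma_1)\to E_\lambda(\gamma_2)}$ is closed and densely defined, and that its adjoint is $(T_\lambda)^*=T^*|_{E_\lambda(\gamma_2)\to E_\lambda(\gamma_1)}$.

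This adjoint identification is the step needing the most care with unbounded domains. I would handle it by checking directly: for $y\in E_\lambda(\gamma_2)$ in $\operatorname{dom}(T_\lambda^*)$, the functional $x\mapsto\langle Tx,y\rangle$ vanishes on $E_\lambda(\gamma_1)^\perp\cap\operatorname{dom}T$ by block diagonality, and is therefore bounded on all of $\operatorname{dom}T$. Hence $y\in\operatorname{dom}T^*$, and $T^*y\in E_\lambda(\gamma_1)$. The range of $T_\lambda$ is $\operatorname{ran}(T)\cap E_\lambda(\gamma_2)=P_\lambda(\gamma_2)\operatorname{ran}T$, which is closed, so $T_\lambda$ is Fredholm. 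We then have $\Ker T_\lambda=\Ker T\cap E_\lambda(\gamma_1)$ and $\Ker T_\lambda^*=\Ker T^*\cap E_\lambda(\gamma_2)$, and therefore
\[\ind T_\lambda=\dim\Ker(T|_{E_\lambda(\gamma_1)})-\dim\Ker(T^*|_{E_\lambda(\gamma_2)}).\]

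Finally, subtract the two trace formulas. Both sums are finite, so we may combine them termwise and multiply each index by $\lambda$. This yields $\ind_\gamma(T)=\sum_\lambda\lambda\,\ind(T_\lambda)$, with only finitely many nonzero terms.
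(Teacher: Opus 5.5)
Your proposal is correct and follows the paper's proof: apply Lemma~\ref{finiteES} to $T$ and to $T^*$ and subtract. Your extra care (equivariance of $T^*$, the identification $(T_\lambda)^*=T^*|_{E_\lambda(\gamma_2)\to E_\lambda(\gamma_1)}$, Fredholmness of $T_\lambda$) fills in what the paper leaves implicit. Summing over $\Eig(\gamma_1)\cup\Eig(\gamma_2)$ is genuinely necessary, not merely one of two options: the paper's proof silently sums the $T^*$-trace over $\Eig(\gamma_1)$, and for $T\colon\{0\}\to\C$ with $\gamma_2=\mu$ one gets $\ind_\gamma(T)=-\mu$ while the sum over $\Eig(\gamma_1)=\emptyset$ vanishes, so your alternative ``such cokernels vanish'' is false in general (this is harmless in the paper's applications, where $\gamma_1=\gamma_2$).

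One slip: block diagonality is $P_\lambda(\gamma_2)T\subseteq TP_\lambda(\gamma_1)$, i.e.\ $P_\lambda(\gamma_1)$ preserves $\operatorname{dom}T$. The reverse inclusion you wrote fails whenever $E_\lambda(\gamma_1)^\perp\not\subseteq\operatorname{dom}T$.
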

\begin{proof}
    By Lemma \ref{finiteES}, we have
    \begin{align*}
        \tr(\gamma_1|_{\Ker(T)})&=\sum\limits_{\lambda\in \Eig(\gamma_1)}\lambda \dim(\Ker(T|_{E_\lambda(\gamma_1)\to E_\lambda(\gamma_2)}).\\
    \end{align*}
    Doing the same for $T^*$, we obtain
    \begin{align*}
        \tr(\gamma_2|_{\Ker(T^*)})&=\sum\limits_{\lambda\in \Eig(\gamma_1)}\lambda \dim(\Ker(T^*|_{E_\lambda(\gamma_1)\to E_\lambda(\gamma_2)})\\
        &=\sum\limits_{\lambda\in \Eig(\gamma_1)}\lambda \dim(\Ker((T|_{E_\lambda(\gamma_1)\to E_\lambda(\gamma_2)})^*).\\
    \end{align*}
    As the index is the difference of kernel and cokernel, we obtain as the difference of these two equations
    \[\ind_\gamma(T)=\sum\limits_{\lambda\in \Eig(\gamma_1)}\lambda\ind(T|_{E_\lambda(\gamma_1)\rightarrow E_\lambda(\gamma_2)}).\]
\end{proof}
This characterization of the equivariant index immediately yields that properties like homotopy invariance carry over from the classical to the equivariant index.
\subsection{Equivariant spectral flow}
The definition of equivariant spectral flow follows the same pattern as that of the equivariant index. We simply replace dimensions by traces of $\gamma$ over the corresponding space. As for the index, one could equivalently consider classes in the representation ring or in K-theory.

Consider a family $(A(t))_{t\in[0,1]}$ of unbounded self-adjoint Fredholm operators in a Hilbert space $H$ that is continuous (in $t$) with respect to the gap metric (equivalently, $(A(t)+i)^{-1}$ depends continuously on $t$, see \cite[Theorem 1.1]{BLP}). The gap metric induces a fairly coarse topology that is more general than what we are going to need. For our purpose, the following is sufficient: If all $A$ have the same domain $W$ and are norm continuous as a family of bounded operators $W\to H$, where $W$ carries the graph norm of any $A$, then $A$ is also continuous with respect to the gap metric.

Let us start by recalling the definition of ordinary spectral flow (see \cite[Definition 2.12 and surrounding propositions]{BLP})
\begin{defprop}
    There is a partition $0=t_0<t_1<\dots<t_n=1$ of $[0,1]$ and numbers $a_1,\dots,a_n\in\R$ such that $P_{[-a,a]}(A(t))$ has finite rank and is continuous in $t$ for $t\in[t_{n-1},t_n]$. We shall call a choice of such $t_k$ and $a_k$ a flow partition for the family $A$.
    For such a flow partition, the spectral flow of the family $A$ is defined as
    \[\sfl(A)=\sum\limits_{k=1}^n\dim(E_{[0,a_k]}(A(t_k)))-\dim(E_{[0,a_k]}(A(t_{k-1}))).\]
    The spectral flow is independent of the choice of flow partition.
\end{defprop}
We now turn to the equivariant setting.
\begin{nt}
    For the rest of this section, let $A(t)$ be a gap continuous family of self-adjoint  Fredholm operators commuting with a unitary operator $\gamma$.
\end{nt}
The operator $\gamma$ should, of course, be thought of as the action of a group element, but the choice of group plays no role in our considerations. In fact, we only need that $\gamma$ is normal and commutes with $A(t)$.
\begin{df}
    For any flow partition of $A$, we define the $\gamma$ spectral flow of $A$ as
    \[\sfl_\gamma(A)=\sum\limits_{k=1}^n\tr(\gamma|_{E_{[0,a_k]}(A(t_k))})-\tr(\gamma|_{E_{[0,a_k]}}(A(t_{k-1}))).\]
\end{df}
For $\gamma=1$, one obtains ordinary spectral flow. Well-definedness of this can be shown completely analogous to that of the ordinary spectral flow. However, it also follows automatically from the following result, which is at the same time exactly what we need to relate the equivariant spectral flow to the non-equivariant one.
\begin{thm}
\label{sfdeco}
        For any flow partition, we have
        \[\sfl_\gamma(A)=\sum\limits_{\lambda\in \Eig(\gamma)}\lambda \sfl(A|_{E_\lambda(\gamma)}).\]
        In particular, $\sfl_\gamma(A)$ is well defined, i.e. independent of the choice of flow partition.
\end{thm}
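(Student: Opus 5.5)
Fix a flow partition $0=t_0<\dots<t_n=1$, $a_1,\dots,a_n$ for $A$. The plan is to apply the second part of Lemma~\ref{finiteES} termwise and then check that the same data form a flow partition for each restricted family $A|_{E_\lambda(\gamma)}$.

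\emph{Termwise identity.} Since $\gamma$ commutes with each $A(t)$, Lemma~\ref{interT} shows that $A(t)$ preserves every eigenspace $E_\lambda(\gamma)$. Hence the restriction $A_\lambda(t):=A(t)|_{E_\lambda(\gamma)}$ is a self-adjoint operator on the closed subspace $E_\lambda(\gamma)$, with domain $\mathrm{dom}(A(t))\cap E_\lambda(\gamma)$. Because $\gamma$ commutes with $A(t)$, it also commutes with its spectral projections. Therefore
\[
E_I(A_\lambda(t))=E_I(A(t))\cap E_\lambda(\gamma)\quad\text{and}\quad P_I(A_\lambda(t))=P_I(A(t))|_{E_\lambda(\gamma)}.
\]
Each $E_{[0,a_k]}(A(t_j))$ is finite dimensional, so Lemma~\ref{finiteES} gives
\[
\tr\bigl(\gamma|_{E_{[0,a_k]}(A(t_j))}\bigr)=\sum_{\lambda\in\Eig(\gamma)}\lambda\dim E_{[0,a_k]}\bigl(A_\lambda(t_j)\bigr).
\]
Only finitely many $\lambda$ contribute here. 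Substituting this into the definition of $\sfl_\gamma(A)$ and exchanging the two sums, which is harmless because all sums are finite, we get
\[
\sum_\lambda \lambda\sum_k\Bigl(\dim E_{[0,a_k]}\bigl(A_\lambda(t_k)\bigr)-\dim E_{[0,a_k]}\bigl(A_\lambda(t_{k-1})\bigr)\Bigr).
\]

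\emph{Flow partition for each restriction.} It remains to identify the inner sum with $\sfl(A_\lambda)$, i.e.\ to check that $(t_k,a_k)$ is a flow partition for $A_\lambda$. This is the main obstacle, although I expect it to be routine.
\begin{itemize}
\item \emph{Gap continuity.} Write $P_\lambda:=P_\lambda(\gamma)$. Since $P_\lambda$ commutes with $(A(t)+\ri)^{-1}$, we have $(A_\lambda(t)+\ri)^{-1}=P_\lambda(A(t)+\ri)^{-1}|_{E_\lambda(\gamma)}$. This is norm continuous in $t$, so $A_\lambda$ is gap continuous by the characterization in \cite{BLP}.
\item \emph{Fredholmness.} The kernel of $A_\lambda(t)$ is finite dimensional. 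Its range is $\mathrm{ran}(A(t))\cap E_\lambda(\gamma)$, which is closed. The range of $A(t)$ splits as the direct sum of its parts in $E_\lambda(\gamma)$ and in $E_\lambda(\gamma)^\perp$, so the cokernel of $A_\lambda(t)$ is a subspace of the cokernel of $A(t)$ and hence finite dimensional.
\item \emph{Projections.} $P_{[-a_k,a_k]}(A_\lambda(t))$ is the restriction of the finite-rank continuous family $P_{[-a_k,a_k]}(A(t))$ on $[t_{k-1},t_k]$, so it is again of finite rank and continuous.
\end{itemize}

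\emph{Conclusion.} Each inner sum is therefore $\sfl(A_\lambda)$, which proves the formula. The right-hand side is independent of the flow partition, because ordinary spectral flow is. This gives the well-definedness of $\sfl_\gamma(A)$. One also has to check that the right-hand side is a finite sum independent of the partition. Indeed, $\sfl(A_\lambda)$ vanishes whenever $E_\lambda(\gamma)$ meets none of the finitely many finite-dimensional spaces $E_{[0,a_k]}(A(t_j))$ of one fixed partition, so only finitely many $\lambda$ give a nonzero term.
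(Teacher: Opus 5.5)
Your proposal is correct and follows the same route as the paper. Like the paper, you apply Lemma~\ref{finiteES} termwise to the defining sum, exchange the finite sums, and note that a flow partition for $A$ is also one for each restriction $A|_{E_\lambda(\gamma)}$; your extra checks (gap continuity and Fredholmness of the restrictions, and that only finitely many $\lambda$ contribute independently of the partition) fill in details the paper leaves implicit.
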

    \begin{proof}
        Let $(t_k)_{0\leq k\leq n}$ and $(a_k)_{1\leq k\leq n}$ be a flow partition for $A$. As the spectral projections of restrictions of $A$ are finite rank and continuous if those of $A$ are, this is also a flow partition for any restriction of $A$ to an invariant subspace.  Using Lemma \ref{finiteES}, we compute
        \begin{align*}
            \sfl_\gamma(A)&=\sum\limits_{k=1}^n\tr(\gamma|_{E_{[0,a_k]}(A(t_k))})-\tr(\gamma|_{E_{[0,a_k]}(A(t_{k-1}))})\\
            &=\sum\limits_{k=1}^n\sum\limits_{\lambda\in \Eig(\gamma)}\lambda\dim(E_{[0,a_k]}(A(t_k)|_{E_\lambda(\gamma)}))-\sum\limits_{\lambda\in \Eig(\gamma)}\lambda\dim(E_{[0,a_k]}(A(t_{k-1})|_{E_\lambda(\gamma)}))\\
            &=\sum\limits_{\lambda\in \Eig(\gamma)}\lambda\sum\limits_{k=1}^n\dim(E_{[0,a_k]}(A(t_k)|_{E_\lambda(\gamma)}))-\dim(E_{[0,a_k]}(A(t_{k-1})|_{E_\lambda(\gamma)}))\\
            &=\sum\limits_{\lambda\in \Eig(\gamma)}\lambda\sfl(A|_{E_\lambda(\gamma)}).\\
        \end{align*}
    \end{proof}

    This can be used to easily translate results on ordinary spectral flow into results on equivariant spectral flow. To illustrate this, we give a very simple alternative proof for a list of known properties (c.f. \citep{IJW, GSFunique}) of the equivariant spectral flow, using their non-equivariant analogues.
    
    \begin{thm}
    \label{sfaxioms}
        The $\gamma$ spectral flow has the following properties:
        \begin{enumerate}
            \item If all $A(t)$ are invertible, then $\sfl_\gamma(A)=0$.
            \item The spectral flow of a direct sum is given by
            \[\sfl_\gamma(A\oplus B)=\sfl_\gamma(A)+\sfl_\gamma(B).\]
            \item For a continuous homotopy $(A(t,s))$ with fixed endpoints we have
            \[\sfl_\gamma(A(\cdot,1))=\sfl_\gamma(A(\cdot,0)).\]
            \item If $H$ is finite dimensional, then 
            \[\sfl_\gamma(A)=\tr(\gamma|_{E_{(-\infty,0)}(A(0))})-\tr(\gamma|_{E_{(-\infty,0)}(A(1))})\]
            \item The spectral flow of a concatenation of paths is given by
            \[\sfl_\gamma(A*B)=\sfl_\gamma(A)+\sfl_\gamma(B).\]
        \end{enumerate}
    \end{thm}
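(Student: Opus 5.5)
The plan is to reduce each of the five properties to its non-equivariant counterpart via Theorem~\ref{sfdeco}:
\[\sfl_\gamma(A)=\sum_{\lambda\in\Eig(\gamma)}\lambda\,\sfl(A|_{E_\lambda(\gamma)}).\]
The sum is effectively finite. If we fix one flow partition, Lemma~\ref{finiteES} shows that only finitely many eigenspaces meet the finite-rank spectral projections, and every other restriction has zero spectral flow. For each property I first check that the restrictions to the closed invariant subspaces $E_\lambda(\gamma)$ are again admissible families. Since $\gamma$ commutes with $A(t)$, the resolvents commute with $P_\lambda(\gamma)$, so the restricted families are self-adjoint, Fredholm and gap continuous. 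Then I apply the classical property on each summand (see \cite{BLP}).

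\textbf{Properties (1), (2), (5).} For (1), each restriction $A(t)|_{E_\lambda(\gamma)}$ is invertible, so every non-equivariant spectral flow vanishes and hence so does the sum.

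For (2), let $\gamma$ act diagonally on $H\oplus H'$. Then
\[E_\lambda(\gamma\oplus\gamma')=E_\lambda(\gamma)\oplus E_\lambda(\gamma'),\qquad (A\oplus B)|_{E_\lambda}=A|_{E_\lambda(\gamma)}\oplus B|_{E_\lambda(\gamma')}.\]
Additivity of ordinary spectral flow on each summand then gives the claim after summing over $\lambda$.

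For (5), restriction commutes with concatenation, so (5) follows from additivity of ordinary spectral flow under concatenation.

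\textbf{Property (3).} The homotopy $A(t,s)$ is assumed $\gamma$-equivariant for all $s$, so its restriction to each $E_\lambda(\gamma)$ is a homotopy with fixed endpoints there. Ordinary homotopy invariance gives $\sfl(A(\cdot,1)|_{E_\lambda})=\sfl(A(\cdot,0)|_{E_\lambda})$ for each $\lambda$, and summing over $\lambda$ gives (3). The step needing care is continuity of the restricted homotopy. I would obtain it by noting that the resolvent of the restriction is the compression $P_\lambda(\gamma)(A+i)^{-1}P_\lambda(\gamma)$, which is norm continuous whenever $(A+i)^{-1}$ is.

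\textbf{Property (4).} When $H$ is finite dimensional, the classical formula gives
\[\sfl(A|_{E_\lambda})=\dim E_{(-\infty,0)}(A(0)|_{E_\lambda})-\dim E_{(-\infty,0)}(A(1)|_{E_\lambda}).\]
I multiply by $\lambda$, sum over $\lambda$, and apply Lemma~\ref{finiteES} with $I=(-\infty,0)$ and $T=A(0)$, respectively $T=A(1)$. This reassembles the sums into $\tr(\gamma|_{E_{(-\infty,0)}(A(0))})-\tr(\gamma|_{E_{(-\infty,0)}(A(1))})$.

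I expect the only real obstacle to be justifying that restriction to eigenspaces preserves the hypotheses (gap continuity, Fredholmness, continuity of homotopies) and that the infinite sum over $\lambda$ causes no convergence issues. The algebra in each case is immediate.
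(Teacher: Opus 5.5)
Your proposal is correct and follows the paper's proof: restrict every operator to the eigenspaces $E_\lambda(\gamma)$, apply the classical property there, and reassemble via Theorem~\ref{sfdeco}, with Lemma~\ref{finiteES} used for (4). Your added checks, namely that the sum is effectively finite for a fixed flow partition and that restricted homotopies stay gap continuous because the restricted resolvent is the compression $P_\lambda(\gamma)(A+i)^{-1}P_\lambda(\gamma)$, spell out details the paper only asserts when it says that ``all constructions involved are compatible with restrictions to eigenspaces''.
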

\begin{proof}
    Each of the Axioms can be written in the form
    \[\LHS_\gamma=\RHS_\gamma.\]
    As the classical spectral flow satisfies all the axioms, we have for every $\lambda\in \Eig(\gamma)$
    \[\LHS_1|_{E_\lambda(\gamma)}=\RHS_1|_{E_\lambda(\gamma)}\]
    where the restriction is to be interpreted as restricting every operator inside the expression.
    As all constructions involved are compatible with restrictions to eigenspaces (e.g. the restriction of a homotopy is again a homotopy) and traces are additive over direct sums (for 4.), we get using Theorem \ref{sfdeco}
    \begin{align*}
        \LHS_\gamma&=\sum\limits_{\lambda\in\Eig(\gamma)}\lambda \LHS_1|_{E_\lambda(\gamma)}\\
        &=\sum\limits_{\lambda\in\Eig(\gamma)}\lambda \RHS_1|_{E_\lambda(\gamma)}\\
        &=\RHS_\gamma.
    \end{align*}
\end{proof}
\begin{rem} The equivariant spectral flow is uniquely determined by these properties (or even a suitable subset thereof), as shown in \cite[Theorem 2.1]{GSFunique}. A short alternative proof of this could be achieved using the methods above, given a suitable version of the uniqueness theorem for non-equivariant spectral flow. However, as the existing results use slightly different choices of axioms from \citep{GSFunique} and are stated for maps into $\Z$ (while we would require, a priori, maps into $\C$), we will not go into this here.
\end{rem}

%
%
%
%
%
%
%
%
%
%
\section{Set-up: The Dirac bundle} 
\label{sec: set-up}

Let $M$ be an even-dimensional timelike compact globally hyperbolic smooth spin-manifold with spacelike boundary $\partial M$ (Definition~\ref{def: cpt_GHST_sBdy}). 
Let $\Gamma$ be a group acting by isometries on $M$ that preserve spin-structure and time-orientation.

By Corollary~\ref{Isplit}, we may assume without loss of generality that $M$ is a smooth product manifold $[0, 1] \times \Sigma$ with boundary $\partial M := \Sigma_{0} \sqcup \Sigma_{1}$, where $\Sigma_t : =\{t\} \times \Sigma$ is a smooth spacelike Cauchy hypersurface and $\Sigma_{0}$ is in the causal past of $\Sigma_{1}$.  
The spacetime metric on $M$ is given by
\begin{equation} \label{eq: spacetime_metric}
    g=-N^2dt^2+g_t, 
\end{equation}
where $N$ is a smooth positive real-valued function on $[0, 1] \times \Sigma$ that is identically $1$ around $\Sigma_{0}$ and $\Sigma_{1}$ and $(g_t)_{t\in[0,1]}$ is a smooth family of Riemannian metrics on $\Sigma$. 
The action of $\Gamma$ on $M$ is induced by a $\Gamma$-action on $\Sigma$ via 
\begin{equation} \label{eq: group_action_Cauchy_surface}
    \gamma.(t,x')=(t,\gamma.x').
\end{equation}
Define a Riemannian metric on $M$ by 
\begin{equation} \label{eq: spacetime_Riem_metric}
    \hat{g} := N^2dt^2+g_t.
\end{equation}
Note that $\hat g$ is also invariant under the group action: as $g$ and $dt$ are invariant and orthogonality to slices is preserved, $N$ and $g_t$ must also be invariant and so $\hat g$ is invariant. 

We now briefly describe the Dirac bundle over a Lorentzian and Riemannian manifolds in a unified fashion, and refer to, 
e.g.~\citep{BSglobal} 
or~\citep{vDsplit} 
for details. 
The latter can be either $(M, \hat{g})$ or $(\Sigma, g_{t})$.
To deal with ${\ri}$-factors that differ between both cases, we let $\tau$ denote $1$ in the Riemannian and ${\ri}$ in the Lorentzian case. 

Let $\nu$ denote the past-directed unit normal field to $\Sigma_t$.  
The spinor bundle $SM$ of $M$ splits into bundles of right- and left-handed spinors, denoted by $S^+M$ and $S^-M$. We identify $S^+M|_{\Sigma_t}$ and $S\Sigma_t$ such that Clifford multiplications $\fc$ and $\fc_{\Sigma}$ on $M$ and $\Sigma_t$, respectively, are related by 
(see 
e.g.~\citep[Section 2.3]{vDsplit}) 
\[\mathsf{c}_\Sigma(X)=\tau\mathsf{c}(\nu)\mathsf{c}(X)\]
for any tangent vector $X$ on $M$. 
Note 
that~\citet{vDsplit} uses $\nu$ future-directed. 
This also leads to different choices for other identifications, so in the end everything ends up the same except for the sign of $\partial_t$ being opposite in every formula. 

In the Lorentzian case, the indefinite pairing $\<\cdot,\cdot\>_M$ of $SM$ is related to the inner product $\<\cdot,\cdot\>_{\Sigma}$ of $S\Sigma_t$ via
\[\<\cdot,\cdot\>_{\Sigma}=\<\cdot,\mathsf{c}(\nu)\cdot\>_M.\] 
Clifford multiplication on $SM$ is self-adjoint with respect to the pairing of $M$.
In the Riemannian case, both pairings coincide and Clifford multiplication is always skew-adjoint.
The Levi-Civita spin connections of $\spinorBun$ and $S \Sigma_{t}$ on $S^+M|_{\Sigma_t} \cong S\Sigma_t$ are related via 
(see 
e.g.~\citep[(2.9)]{vDsplit})
\begin{equation}
    \nabla^{SM}_X=\nabla^{S\Sigma_t}_X+\mathsf{c}(\nu)\mathsf{c}(\mathscr{W}X),\label{conneq}
\end{equation}
where $\mathscr{W}$ denotes the Weingarten map of $\Sigma_t$.

We additionally consider a complex twisting vector bundle $E$ equipped with a Hermitian  metric, a compatible connection $\nabla^{E}$ and an action of $\Gamma$ that covers the $\Gamma$-action on $M$ and preserves the connection and the hermitian metric. We define the twisted spinor bundle
\[F:=SM\otimes E\]
with grading
\[F^\pm:=S^\pm(M)\otimes E.\]
We obtain Clifford multiplications as above on $F$ by acting in the first factor and also obtain pairings on $F^+$, defined from the pairings above via 
\[\<s\otimes v, s'\otimes v'\>=\<s,s'\>\<v,v'\>.\] Whenever we talk about $L^2$-spaces of sections, these will be defined via the positive definite product $\<\cdot,\cdot\>_\Sigma$.

All the properties listed above also hold after twisting with $E$, and in the following, we will always use the above notation to refer to the operations on $F$.
The natural connection on $F$ is given by
\[\nabla^{F} (s\otimes v)=(\nabla^{SM}s)\otimes v+s\otimes(\nabla^Ev)\]
and it is compatible with the inner product of $F$. 
Formula \eqref{conneq} holds verbatim for the twisted connections (as the additional summand from the twisting is the same on either side of the equation).

The twisted Dirac operator (on positive chirality spinors) is given by 
\[D:= \sum\limits_{i=0}^{d} g (e_i,e_i) \mathsf{c}(e_i)\nabla_{e_i}^{F} : C^{\infty} (M; F^+) \to C^{\infty} (M; F^{-}),\]
where $(e_{i})$ is a local orthonormal frame of $TM$ and $C^{\infty} (M; F^\pm)$ is the space of smooth sections of the twisted spinor bundle $F^\pm$. 
In the following section, we want to treat the Lorentzian and Riemannian Dirac operators in parallel. 
Thus we allow $g$ to be either our Lorentzian metric~\eqref{eq: spacetime_metric} or the Riemannian metric~\eqref{eq: spacetime_Riem_metric}, and denote the twisted Dirac operators for either metric by $D$. 
Later on, we will distinguish them by writing $D$ and $\hat{D}$, respectively.

Let $A(t)$ denote the twisted Dirac operator on a hypersurface $\Sigma_t$. Note that we identified $(F^+)|_{\Sigma_t}$ with $S\Sigma_t\otimes (E|_{\Sigma_t})$, so the family $(A(t))_{t\in [0,1]}$ acts on sections of $F^+$ via
\[(Af)(t,x)=(A(t)f(t,\cdot))(x).\]
Note that $A (t)$ has discrete real spectrum as $\Sigma$ is compact.  
The equivariant $\eta$-invariant $\eta_{\gamma} \big( A (t) \big)$ of $A (t)$ is defined as the analytic continuation of the spectral function 
\begin{equation*}
    \eta_{\gamma} \big( s, A (t) \big) :=  \sum_{\lambda \in \spec A (t)} \sign (\lambda) |\lambda|^{-s} \tr_{E_{\lambda} (A (t))} (\gamma |_{E_{\lambda} (A (t))})  
\end{equation*}
at $s=0$ 
(\citep[p. 892]{Donnelly_IndianaUMJ_1978}). 

In the following, we shall be identifying functions on a product with iterated functions, i.e. writing $f(t)$ for $f(t,\cdot)$. We will also occasionally identify $\Sigma$ and $\Sigma_0$.

Note that everything constructed above is preserved by the $\Gamma$-action, as it is constructed from building blocks that are preserved.

\section{Relating to an abstract model operator}
\label{sec: relation_abstract_model_op}
Section \ref{sec: G_ind_G_sf} tells us how to reduce equivariant problems to the non-equivariant case: decompose everything into eigenspaces. While this works well on the functional analytic side, this does not work well on the geometric side: the restriction of a Dirac operator to a closed subspace is not again a Dirac operator. We thus want to relate our Dirac operator to operators of the form $\partial_t-{\ri}A$  (and do something similar for a suitable Riemannian Dirac operator). Operators of the latter form stay of that form under suitable restrictions, which will prove useful to us later.

If the lapse function $N$ were constantly one, we could get the result we want directly from \cite[Proposition 3.5]{vDsplit} (in that case, we would automatically have that the normal field $\nu$ is parallel). We can always obtain a metric with $N=1$ via continuous deformation of $g$. However, it is at this point not trivial to show that this deformation would preserve the equivariant index. It can be done independently, but we believe that the conversion between the geometric and the abstract setting (without a restriction on the lapse function) is interesting and potentially useful in its own right.

\subsection{Suitable isometries}
While $M$ is topologically a product, the inner products that occur on different timeslices do not coincide. To account for this, we have to carefully choose how we relate spinors at different times. Doing this in the right way will cancel out error terms later on.
\begin{df}
\label{dfisos}
    Let $\rho(t)$ denote the positive function on $\Sigma$ such that
    \[\dVol_{\Sigma_t}=\rho(t)^2\dVol_{\Sigma_0}.\]
    Let $\Pi_t$ denote parallel transport along timelines from $\Sigma_t$ to $\Sigma_0$ (by slight abuse of notation, we will use the same symbol for parallel transport on $TM$ and $F$). Let $\tilde\nu(t):=\Pi_t\nu(t)$. Let $\Phi(t)$ be the solution to the differential equation
    \[\partial_t\Phi(t)=-\frac{1}{2}\mathsf{c}(\tilde\nu(t))\mathsf{c}(\partial_t\tilde\nu(t))\Phi(t)\]
    with
    \[\Phi(0)=id.\]
    Define 
    \[U(t):=\Phi^{-1}(t)\Pi_t, \quad V(t):=\rho(t)U(t), \quad W(t):=N(t)^{\frac{1}{2}}V(t).\]
\end{df}

Note that all the objects defined above are constructed only from $g$-equivatiant ingredients and thus $g$-equivariant themselves. Similarly, everything preserves the grading (Clifford multiplication only occurs as a pair) and thus descends to $S^+M$.

\begin{lemma}
With respect to the pairings induced by either $\<\cdot,\cdot\>_\Sigma$ or $\<\cdot,\cdot\>_M$,
    \begin{enumerate}
        \item $U(t)$ is pointwise unitary and $U(t)\mathsf{c}(\nu(t))U(t)^{-1}=\mathsf{c}(\nu(0))$. 
        \item $V(t)$ as a map $L^2(\Sigma_t; S^+M)\to L^2(\Sigma_0; S^+M)$ is unitary.
        \item $W$ as a map $L^2(M; S^+M)\mapsto L^2([0,1]\times \Sigma_0; \pr^*(S^+M|_{\Sigma_0}))$, defined by $(W\phi)(t,x):=W(t)(\phi(t,x))$, is unitary. 
    \end{enumerate}
\end{lemma}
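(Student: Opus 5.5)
We treat the three claims in order, each resting on the previous one.

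For (1), we first note that parallel transport $\Pi_t$ along the timelines is pointwise unitary for both pairings, because $\nabla^F$ is compatible with $\<\cdot,\cdot\>_M$ (and, in the Riemannian case, with the unique pairing). It also intertwines Clifford multiplication, i.e. $\Pi_t\mathsf{c}(X)\Pi_t^{-1}=\mathsf{c}(\Pi_tX)$, because the spin connection is compatible with Clifford multiplication. Consequently $\Pi_t\mathsf{c}(\nu(t))\Pi_t^{-1}=\mathsf{c}(\tilde\nu(t))$.

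Next we show that $\Phi(t)$ is unitary and satisfies $\Phi(t)\mathsf{c}(\tilde\nu(0))\Phi(t)^{-1}=\mathsf{c}(\tilde\nu(t))$, with $\tilde\nu(0)=\nu(0)$.
\begin{itemize}
\item Differentiating $\<\tilde\nu,\tilde\nu\>=\pm1$ gives $\tilde\nu\perp\partial_t\tilde\nu$. Hence $\mathsf{c}(\tilde\nu)\mathsf{c}(\partial_t\tilde\nu)$ is a product of two anticommuting Clifford multiplications, i.e. an element of $\mathfrak{spin}$.
\item Such an element is skew-adjoint with respect to the spinor pairing in both signatures. In the Lorentzian case each factor is self-adjoint and the factors anticommute; in the Riemannian case each factor is skew-adjoint. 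In either case $(\mathsf{c}(a)\mathsf{c}(b))^*=\mathsf{c}(b)\mathsf{c}(a)=-\mathsf{c}(a)\mathsf{c}(b)$.
\item A linear ODE whose generator is skew-adjoint yields a unitary solution. This gives unitarity of $\Phi(t)$, and hence of $U(t)$, for $\<\cdot,\cdot\>_M$.
\end{itemize}
For the conjugation identity, set $X(t):=\Phi(t)^{-1}\mathsf{c}(\tilde\nu(t))\Phi(t)$ and differentiate. Using $\mathsf{c}(\tilde\nu)^2=\pm1$ and the anticommutation above, the commutator term $\frac12[\mathsf{c}(\tilde\nu)\mathsf{c}(\partial_t\tilde\nu),\mathsf{c}(\tilde\nu)]$ equals $-\mathsf{c}(\partial_t\tilde\nu)$ up to the sign conventions. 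This cancels the term $\mathsf{c}(\partial_t\tilde\nu)$ coming from differentiating $\mathsf{c}(\tilde\nu(t))$, so $\partial_tX=0$ and $X(t)=\mathsf{c}(\nu(0))$. Therefore $U(t)\mathsf{c}(\nu(t))U(t)^{-1}=\Phi^{-1}\mathsf{c}(\tilde\nu(t))\Phi=\mathsf{c}(\nu(0))$.

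Unitarity for $\<\cdot,\cdot\>_\Sigma=\<\cdot,\mathsf{c}(\nu)\cdot\>_M$ then follows directly: $\<U\phi,U\psi\>_{\Sigma_0}=\<U\phi,\mathsf{c}(\nu(0))U\psi\>_M=\<U\phi,U\mathsf{c}(\nu(t))\psi\>_M=\<\phi,\psi\>_{\Sigma_t}$. The main obstacle is getting this sign computation right consistently in both signatures, so I would carry it out once with $\tau$-generic conventions.

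For (2) and (3), we integrate the pointwise statement.
\begin{itemize}
\item For (2), since $\rho$ is a real scalar, $\int_{\Sigma}\<\rho U\phi,\rho U\psi\>\,\dVol_{\Sigma_0}=\int_\Sigma\<\phi,\psi\>\,\dVol_{\Sigma_t}$. This shows $V(t)$ is isometric. It is surjective because $U(t)$ is pointwise invertible and $\rho>0$, so $V(t)$ is unitary.
\item For (3), the volume form of $M$ for either metric is $N\,\rd t\wedge\dVol_{\Sigma_t}$. Hence $\|W\phi\|^2=\int_0^1\int_\Sigma N\,\|V(t)\phi(t)\|^2\,\dVol_{\Sigma_0}\,\rd t=\|\phi\|^2_{L^2(M)}$, using that $N$ is a positive function commuting with $V(t)$.
\item Surjectivity of $W$ follows because $W^{-1}(t)=N^{-1/2}V(t)^{-1}$ is given pointwise and is measurable in $t$.
\end{itemize}
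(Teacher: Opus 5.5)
Your route is the paper's. You get unitarity of $\Pi_t$ and $\Phi(t)$ for $\langle\cdot,\cdot\rangle_M$ from the skew-adjoint generator, the conjugation identity by differentiating $\Phi^{-1}\mathsf{c}(\tilde\nu)\Phi$, unitarity for $\langle\cdot,\cdot\rangle_\Sigma$ from $\langle\cdot,\cdot\rangle_\Sigma=\langle\cdot,\mathsf{c}(\nu)\cdot\rangle_M$, and then (2) and (3) by integrating against $\rho^2$ and $N\rho^2$. Your surjectivity remarks are a small addition that the paper leaves implicit.

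The one step that fails as you state it is the conjugation identity holding ``up to the sign conventions'' in both signatures. The sign is not a convention. Write $\Omega=-\frac12\mathsf{c}(\tilde\nu)\mathsf{c}(\partial_t\tilde\nu)$ and use $\tilde\nu\perp\partial_t\tilde\nu$. Then
\[\partial_t\bigl(\Phi^{-1}\mathsf{c}(\tilde\nu)\Phi\bigr)=\Phi^{-1}\bigl(\mathsf{c}(\partial_t\tilde\nu)-[\Omega,\mathsf{c}(\tilde\nu)]\bigr)\Phi=\bigl(1-\mathsf{c}(\tilde\nu)^2\bigr)\Phi^{-1}\mathsf{c}(\partial_t\tilde\nu)\Phi .\]
Carried out $\tau$-generically, this prefactor is $1+\tau^2$. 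In Lorentzian signature $\mathsf{c}(\nu)^2=1$, which is forced by positivity of $\langle\cdot,\mathsf{c}(\nu)\cdot\rangle_M$ together with self-adjointness of $\mathsf{c}(\nu)$. So there the derivative vanishes and the cancellation works. In Riemannian signature Clifford multiplication is skew-adjoint, so $\mathsf{c}(\nu)^2=-1$. The derivative is then $2\Phi^{-1}\mathsf{c}(\partial_t\tilde\nu)\Phi$. This is non-zero wherever $\partial_t\tilde\nu=\Pi_t\nabla_{\partial_t}\nu\neq 0$, i.e.\ wherever $\grad_\Sigma N\neq 0$. So your planned $\tau$-generic computation cannot succeed with $\Phi$ as defined.

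The fix is what the paper does. It proves $\langle\cdot,\cdot\rangle_M$-unitarity in both signatures, remarks that ``in the Riemannian case, this is all we need'', and runs the conjugation computation only in the Lorentzian case, where it is needed for $\langle\cdot,\cdot\rangle_\Sigma$. (Note that this means the paper's proof establishes the conjugation part of (1) only in Lorentzian signature.) Nothing is lost for the unitarity claims. In the Riemannian case $\langle\cdot,\cdot\rangle_\Sigma=\langle\cdot,\cdot\rangle_M$, so unitarity of $U(t)$ for $\langle\cdot,\cdot\rangle_M$ is all that (1)--(3) require there.

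Relatedly, your opening sentence should say that $\Pi_t$ is unitary for $\langle\cdot,\cdot\rangle_M$ only. In Lorentzian signature it is not unitary for $\langle\cdot,\cdot\rangle_\Sigma$, since in general $\Pi_t\mathsf{c}(\nu(t))\Pi_t^{-1}=\mathsf{c}(\tilde\nu(t))\neq\mathsf{c}(\nu(0))$. That failure is exactly why $\Phi$ is introduced. Your later argument never uses the stronger claim, so this is only a wording issue.
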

\begin{proof}
Parallel transport is an isometry with respect to the fibrewise pairing of $M$. We have 
\[\<\tilde\nu,\partial_t\tilde\nu\>_M=\frac{1}{2}\partial_t\<\tilde\nu,\tilde\nu\>=0,\]
as $\nu$ has unit length. Thus $\mathsf{c}(\tilde\nu)$ and $\mathsf{c}(\partial_t\tilde\nu(t))$ anticommute. This implies that $\mathsf{c}(\tilde\nu(t))\mathsf{c}(\partial_t\tilde\nu(t))$ is skew-adjoint with respect to the pairing of $M$ in either signature. We compute for pointwise spinors $v,w$
\begin{align*}
    \partial_t\<\Phi v,\Phi w\>_M&=-\frac{1}{2}(\<\mathsf{c}(\tilde\nu(t))\mathsf{c}(\partial_t\tilde\nu(t))\Phi v,\Phi w\>_M+\<\Phi v,\mathsf{c}(\tilde\nu(t))\mathsf{c}(\partial_t\tilde\nu(t))\Phi w\>_M)\\
    &=-\frac{1}{2}(\<\mathsf{c}(\tilde\nu(t))\mathsf{c}(\partial_t\tilde\nu(t))\Phi v,\Phi w\>_M-\<\mathsf{c}(\tilde\nu(t))\mathsf{c}(\partial_t\tilde\nu(t))\Phi v,\Phi w\>_M)\\
    &=0.
\end{align*}
As $\Phi(0)$ is the identity, we can conclude that all $\Phi(t)$ are isometries with respect to the pairing of $M$. Thus also all $U(t)$ are isometries for this pairing. In the Riemannian case, this is all we need, in the Lorentzian case we also have the pairing of $\Sigma$ to consider. For this we compute (using again that $\mathsf{c}(\tilde\nu)$ and $\mathsf{c}(\partial_t\tilde\nu(t))$ anticommute):
\begin{align*}
    &\partial_t(U(t)\mathsf{c}(\nu(t))U^{-1}(t))\\
    &=\partial_t(\Phi(t)^{-1}\Pi_t\mathsf{c}(\nu(t))\Pi_t^{-1}\Phi(t))\\
    &=\partial_t(\Phi(t)^{-1}\mathsf{c}(\tilde \nu(t))\Phi(t))\\
    &=-\Phi(t)^{-1}(\partial_t\Phi(t))\Phi(t)^{-1}\mathsf{c}(\tilde \nu(t))\Phi(t)+\Phi(t)^{-1}(\partial_t \mathsf{c}(\tilde \nu(t)))\Phi(t)+\Phi(t)^{-1}\mathsf{c}(\tilde \nu(t))(\partial_t\Phi(t))\\
    &=\frac{1}{2}\Phi(t)^{-1}\mathsf{c}(\tilde\nu(t))\mathsf{c}(\partial_t\tilde\nu(t))c(\tilde \nu(t))\Phi(t)+\Phi(t)^{-1} \mathsf{c}(\partial_t\tilde \nu(t))\Phi(t)\\
    &-\frac{1}{2}\Phi(t)^{-1}\mathsf{c}(\tilde \nu(t))\mathsf{c}(\tilde\nu(t))\mathsf{c}(\partial_t\tilde\nu(t))\Phi(t)\\
    &=\Phi(t)^{-1}\left(-\frac{1}{2}\mathsf{c}(\partial_t\tilde\nu(t)))+\mathsf{c}(\partial_t\tilde\nu(t)))-\frac{1}{2}\mathsf{c}(\partial_t\tilde\nu(t)))\right)\Phi(t)\\
    &=0
\end{align*}
Thus we have for all $t\in[0,1]$
\[U(t)\mathsf{c}(\nu(t))U^{-1}(t)=U(0)\mathsf{c}(\nu(0))U^{-1}(0)=\mathsf{c}(\nu(0)).\]
Using that $U$ is an isometry with respect to the pairing of $M$, we obtain for any two spinors $v,w$ over some point in $\Sigma_0$:
\begin{align*}
    \<U(t)^{-1}v,U(t)^{-1}w\>_{\Sigma}&=\<U(t)^{-1}v,\mathsf{c}(\nu(t))U(t)^{-1}w\>_{M}\\
    &=\<v,U(t)\mathsf{c}(\nu(t))U(t)^{-1}w\>_{M}\\
    &=\<v,\mathsf{c}(\nu(0))w\>_{M},\\
    &=\<v,w\>_{\Sigma}
\end{align*}
so $U$ is indeed an isometry for either pairing.

   Continuing with either pairing, we have for sections $\phi,\psi\in L^2(\Sigma_t; S^{+}M)$
    \begin{align*}
        \int\limits_\Sigma \<V(t)\phi(x),V(t)\psi(x)\>{\dVol}_{\Sigma_0}(x)&=\int\limits_\Sigma \<U(t)\phi(x),U(t)\psi(x)\>\rho(t,x)^2 {\dVol}_{\Sigma_0}(x)\\&=\int\limits_\Sigma \<\phi(x),\psi(x)\>{\dVol}_{\Sigma_t}.
    \end{align*}
    This proves the second claim. For the third claim, we observe that
    \[{\dVol}_M={\dVol}_{\Sigma_t}\wedge Ndt\]
    and thus for $\phi,\psi\in L^2(M; S^{+}M)$
    \begin{align*}
        &\int\limits_0^1\int\limits_\Sigma \<W(t)\phi(t,x),W(t)\psi(t,x)\>{\dVol}_{\Sigma_0}(x)dt\\ &=\int\limits_0^1\int\limits_\Sigma \<U(t)\phi(t,x),U(t)\psi(t,x)\>N(t,x)\rho(t,x)^2 {\dVol}_{\Sigma_0}(x)\\
        &=\int\limits_0^1\int\limits_\Sigma \<\phi(t,x),\psi(t,x)\>N(t,x){\dVol}_{\Sigma_t}(x)dt\\
        &=\int\limits_M \<\phi(z),\psi(z)\>{\dVol}_M(z).
    \end{align*}
\end{proof}
\begin{rem}
    In the Lorentzian case, $\Phi$ is needed to obtain isometries. In he Riemannian case, the use of $\Phi$ seems fairly arbitrary. It is nevertheless the right choice to make our later calculations work. 
\end{rem}
\subsection{Relating $D$ to the model operator}
\begin{lemma}
\label{gradN}
    We have
    \[\grad_\Sigma {N}=-\nabla_{\partial_t}\nu,\]
    where $\grad_\Sigma$ denotes the gradient as a function on $\Sigma_t$.
\end{lemma}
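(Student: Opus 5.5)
The plan is a direct computation with the Koszul-type identities for the Levi-Civita connection of $g=-N^2dt^2+g_t$. We first express $\nu$ through the coordinate field $\partial_t$. Since the metric has no off-diagonal terms, $\partial_t$ is orthogonal to every $\Sigma_t$ and satisfies $g(\partial_t,\partial_t)=-N^2$. Moreover, $\partial_t$ is future directed, because $t$ increases towards the future boundary. Since $\nu$ is the past-directed unit normal, this gives
\[\nu=-\tfrac{1}{N}\partial_t .\]
(In the Riemannian metric $\hat g$ one must redo this with $\hat g(\partial_t,\partial_t)=+N^2$ and whatever sign convention for $\nu$ is used there. The computation below is written for the Lorentzian metric, which is the case the lemma is needed for.)

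Next we check that both sides are tangent to $\Sigma_t$. The left side is tangent by definition. For the right side, $g(\nu,\nu)=-1$ is constant, so
\[g(\nabla_{\partial_t}\nu,\nu)=\tfrac12\partial_t g(\nu,\nu)=0.\]
It therefore suffices to show
\[g(\nabla_{\partial_t}\nu,X)=-X(N)\]
for every $X$ tangent to $\Sigma_t$, since $g$ restricted to $\Sigma_t$ is $g_t$ and $X(N)=g_t(\grad_\Sigma N,X)$. We take $X$ to be the time-independent extension of a vector field on $\Sigma$ via the product structure $[0,1]\times\Sigma$. Then $[\partial_t,X]=0$ and $g(\partial_t,X)\equiv 0$.

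Now we compute. Expanding $\nabla_{\partial_t}\nu=-\partial_t(1/N)\,\partial_t-\tfrac1N\nabla_{\partial_t}\partial_t$, the first term drops out when paired with $X$, because $\partial_t\perp X$. Hence
\[g(\nabla_{\partial_t}\nu,X)=-\tfrac1N\, g(\nabla_{\partial_t}\partial_t,X).\]
By metric compatibility, torsion-freeness and $[\partial_t,X]=0$,
\begin{align*}
g(\nabla_{\partial_t}\partial_t,X)
&=\partial_t g(\partial_t,X)-g(\partial_t,\nabla_{\partial_t}X)\\
&=-g(\partial_t,\nabla_X\partial_t)\\
&=-\tfrac12 X\, g(\partial_t,\partial_t)\\
&=\tfrac12 X(N^2)\\
&=N\,X(N).
\end{align*}
Combining the two displays gives $g(\nabla_{\partial_t}\nu,X)=-X(N)$, which is the claim.

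There is no real obstacle here. The only delicate point is keeping the signs straight: the orientation of $\nu$ relative to $\partial_t$ and the Lorentzian sign of $g(\partial_t,\partial_t)$. These combine to give the minus sign in the statement.
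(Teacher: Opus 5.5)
Your proof is correct and is essentially the paper's argument: both isolate the tangential part of $\nabla_{\partial_t}\partial_t$ and read it off from the spatial derivative of $g(\partial_t,\partial_t)=-N^2$. The paper does this through the Christoffel symbols $\Gamma^k_{00}$ in coordinates, while you use metric compatibility against a commuting lift $X$. Your sign bookkeeping ($\nu=-N^{-1}\partial_t$, $g(\partial_t,\partial_t)=-N^2$) is in fact cleaner than the paper's, which writes $\nu=N^{-1}\partial_t$ and $\Gamma^k_{00}=-\tfrac12\grad_\Sigma(N^2)_k$; these are two slips that cancel in the Lorentzian case. Your caveat about $\hat g$ is also justified, since there the same computation with past-directed $\nu$ gives $\nabla_{\partial_t}\nu=+\grad_\Sigma N$.
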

\begin{proof}
    We choose coordinates $(x^0,\dots,x^{d-1})$ on $\Sigma$ and obtain coordinates on $M$ by using $t$ as the $0$-coordinate.
    We have
    \[\nabla_{\partial_t}\nu=\nabla_{\partial_t}(N^{-1}\partial_t)=(\partial_tN^{-1})\partial_t +\sum\limits_{k=0}^{d-1}N^{-1}\Gamma^k_{00}\partial_k.\]
    As $\nu$ has constant length, we must have
    \[\<\nu,\nabla_{\partial_t} \nu\>=\frac{1}{2}\partial_t\<\nu,\nu\>=0,\]
    so only the parts orthogonal to $\nu$ remain:
    \[\nabla_{\partial_t} \nu=\sum\limits_{k=1}^{d-1}N^{-1}\Gamma^k_{00}\partial_k.\]
    We compute the relevant Christoffel symbols. For $k\neq 0$, we have
    \[\Gamma^k_{00}=\frac{1}{2}\sum\limits_{l=0}^{d-1}g^{kl}(-\partial_{x^l}g_{00})=-\frac{1}{2}\grad_\Sigma (N^2)_k=-N\grad_\Sigma (N)_k\]
    Thus
    \[\nabla_{\partial_t}\nu=-\grad_\Sigma {N}.\]
\end{proof}
\begin{cor}
$\partial_t\Phi(t)=0$ if $N$ is constant on $\Sigma_t$.
\end{cor}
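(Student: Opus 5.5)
The argument goes directly through Lemma~\ref{gradN} and the defining differential equation of $\Phi$ in Definition~\ref{dfisos}. Recall that
\[\partial_t\Phi(t)=-\tfrac{1}{2}\mathsf{c}(\tilde\nu(t))\mathsf{c}(\partial_t\tilde\nu(t))\Phi(t),\]
so $\partial_t\Phi(t)=0$ at a given $t$ as soon as $\partial_t\tilde\nu(t)=0$ there. It therefore suffices to show that $\partial_t\tilde\nu(t)$ vanishes whenever $N$ is constant on $\Sigma_t$.

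The first step is to rewrite $\partial_t\tilde\nu$. Since $\tilde\nu(t)=\Pi_t\nu(t)$, where $\Pi_t$ is parallel transport along the timelines $s\mapsto(s,x)$ back to $\Sigma_0$, the standard property of parallel transport gives
\[\partial_t\big(\Pi_t\nu(t)\big)=\Pi_t\big(\nabla_{\partial_t}\nu\big)(t).\]
To verify this, write $\nu$ along each timeline in a parallel frame; its coefficient derivatives are then exactly the components of the covariant derivative.

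The second step applies Lemma~\ref{gradN}, which gives $\nabla_{\partial_t}\nu=-\grad_\Sigma N$. If $N$ is constant on $\Sigma_t$, its tangential gradient vanishes, so $\nabla_{\partial_t}\nu=0$ at time $t$. Hence $\partial_t\tilde\nu(t)=0$, and the right-hand side of the ODE vanishes, giving $\partial_t\Phi(t)=0$.

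The proof is essentially immediate; the only point needing care is the identity $\partial_t\Pi_t=\Pi_t\nabla_{\partial_t}$ for the transported vector field. I expect no real obstacle. Note also that if $N$ is constant on every slice (for instance near the boundary, where $N\equiv1$), then $\Phi$ is constant in $t$ on that interval of time. In particular, $\Phi\equiv\mathrm{id}$ on an initial interval near $\Sigma_0$.
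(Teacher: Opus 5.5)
Your proposal is correct and follows the paper's argument. Lemma~\ref{gradN} gives $\nabla_{\partial_t}\nu=0$ on $\Sigma_t$, hence $\partial_t\tilde\nu(t)=0$, and the defining equation of $\Phi$ then forces $\partial_t\Phi(t)=0$. You are slightly more careful than the paper: you make explicit the identity $\partial_t(\Pi_t\nu(t))=\Pi_t(\nabla_{\partial_t}\nu)(t)$, which the paper uses silently, and you read $\partial_t\Phi(t)$ directly off the ODE rather than relying on the paper's looser remark that $\Phi(t)$ ``only depends on $t$ through $\tilde\nu(t)$''.
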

\begin{proof}
    By the previous lemma, we obtain
    \[\partial_t\tilde \nu(t)=\pm N\nabla_\nu \nu(t)=0.\]
    By the chain rule, this implies the claim, as $\Phi(t)$ only depends on $t$ through $\tilde \nu(t)$.   
\end{proof}

The following is a generalisation of \cite[Proposition 3.5]{vDsplit} to the case where $\nabla_\nu\nu$ need not vanish.
\begin{thm}
    \label{Diractoabstract}
    We have
    \[WDW^{-1}=-\tau^2\mathsf{c}(\nu(0))(N^{-\frac{1}{2}}\partial_tN^{-\frac{1}{2}}+\tau^{-1}\tilde A)=-\tau^2\mathsf{c}(\nu(0))N^{-\frac{1}{2}}(\partial_t+\tau^{-1}N^\frac{1}{2}\tilde AN^\frac{1}{2})N^{-\frac{1}{2}},\]
    where $\tilde A(t):=V(t)A(t)V(t)^{-1}$ is unitarily equivalent to $A$ for each $t$(in a $\Gamma$-equivariant way).
\end{thm}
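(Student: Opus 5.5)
We would prove Theorem~\ref{Diractoabstract} by first writing $D$ in the split form adapted to the foliation $(\Sigma_t)$, then conjugating step by step with $\Pi_t$, $\Phi(t)^{-1}$, $\rho(t)$ and $N^{1/2}$. The aim is to check that every zeroth-order error term produced along the way is cancelled.

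\textbf{Step 1: splitting $D$.} Choose a local $g$-orthonormal frame $e_0=\nu$, $e_1,\dots,e_{d}$ tangent to $\Sigma_t$. Then
\[D=g(\nu,\nu)\mathsf{c}(\nu)\nabla^F_\nu+\sum_{j\ge1}\mathsf{c}(e_j)\nabla^F_{e_j}.\]
Multiply the spatial part by $\mathsf{c}(\nu)^{2}=-g(\nu,\nu)$ and use $\mathsf{c}_\Sigma(X)=\tau\mathsf{c}(\nu)\mathsf{c}(X)$ together with \eqref{conneq}. This turns the spatial part into $-\tau^2\tau^{-1}\mathsf{c}(\nu)A(t)$ plus a term $\mathsf{c}(\nu)\sum_j\mathsf{c}(e_j)\mathsf{c}(\nu)\mathsf{c}(\mathscr W e_j)$. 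Anticommuting, this term reduces to a multiple of $\tr(\mathscr W)\,\mathsf{c}(\nu)$, i.e.\ a mean-curvature term $\tfrac{1}{2}H\mathsf{c}(\nu)$ with the appropriate sign. In total,
\[D=-\tau^2\mathsf{c}(\nu)\big(N^{-1}\nabla_{\partial_t}+\tau^{-1}A(t)-\tfrac{1}{2}H\big),\]
with signs fixed by $\nu$ being past directed (this is the computation of \cite[Section 3]{vDsplit}, redone without assuming $N=1$).

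\textbf{Step 2: conjugating by $U$, then $\rho$ and $N^{1/2}$.} Since $\Pi_t$ is parallel transport along $\partial_t$, we have $\Pi_t\nabla_{\partial_t}\Pi_t^{-1}=\partial_t$. Conjugating by $\Phi^{-1}$ gives
\[\Phi^{-1}\partial_t\Phi=\partial_t-\tfrac{1}{2}\Phi^{-1}\mathsf{c}(\tilde\nu)\mathsf{c}(\partial_t\tilde\nu)\Phi.\]
By the Lemma, $U\mathsf{c}(\nu)U^{-1}=\mathsf{c}(\nu(0))$. The extra term transports back to $-\tfrac{1}{2}\mathsf{c}(\nu)\mathsf{c}(\nabla_{\partial_t}\nu)=\tfrac{1}{2}\mathsf{c}(\nu)\mathsf{c}(\grad_\Sigma N)$ by Lemma~\ref{gradN}. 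Next, conjugating by $\rho$ produces $-\rho^{-1}\partial_t\rho$. Since $\partial_t\dVol_{\Sigma_t}=N H\,\dVol_{\Sigma_t}$ (first variation of area, normal speed $N$), this equals $-\tfrac{1}{2}NH$, which after the factor $N^{-1}$ cancels the mean-curvature term from Step 1. Finally, conjugating by $N^{1/2}$ and writing $N^{-1}\partial_t$ as $N^{-1/2}\partial_tN^{-1/2}$ up to $\tfrac{1}{2}N^{-2}\partial_tN$ accounts for the symmetrisation.

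\textbf{Step 3: the main obstacle.} The hard part is showing that the remaining zeroth-order terms cancel. These are the $\grad_\Sigma N$ term coming from $\Phi$, and the fact that $A(t)$ is not conjugated by $N$ in the intended formula but $\tilde A$ is conjugated by $V$ only. Concretely, I expect $V A(t)V^{-1}$ to differ from $U A(t)U^{-1}$ by $\mathsf{c}_\Sigma(\grad\log\rho)$-type terms, and the $\mathsf{c}(\nu)\mathsf{c}(\grad_\Sigma N)$ term must combine with something from commuting $N^{\pm1/2}$ past the spatial part. I would first check pointwise that $\mathsf{c}(\nu)\mathsf{c}(\grad_\Sigma N)$ matches exactly the term $\tfrac{1}{2}N^{-1}\mathsf{c}_\Sigma(\grad N)$ produced by the commutator $[\tilde A,N^{1/2}]$. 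The target is the first expression $N^{-1/2}\partial_tN^{-1/2}+\tau^{-1}\tilde A$, in which $N$ does not conjugate $\tilde A$. So I expect the Step~1 formula to be the right place to keep $A$ unweighted: the $\nu$-derivative carries the $N^{-1}$, and the $\grad N$ term from $\Phi$ should cancel against the non-parallelity of $\nu$, which enters through $\nabla^F_\nu$ versus $\nabla_{\partial_t}$ in the frame. I would verify this cancellation directly in the frame, keeping track of signs via $\tau$.

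\textbf{Step 4: the equalities in the statement.} The second equality is immediate by factoring $N^{-1/2}$ on both sides. Equivariance of $\tilde A$ holds because $V$ is built from $\Gamma$-invariant data, as noted after Definition~\ref{dfisos}, and unitarity is part~2 of the Lemma.
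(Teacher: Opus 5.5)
\textbf{Overall.} Your outline is the paper's proof: split $D$ as in \cite[(2.10)]{vDsplit}, conjugate by $W$, and show that the zeroth-order remainders vanish. Your Step~1 formula is that identity with $\nabla_\nu=-N^{-1}\nabla_{\partial_t}$ and $H=\operatorname{div}\nu$.

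\textbf{The gap is Step~3.} That is where the content lies, and you leave it as an expectation. Of the two mechanisms you float, you end on the wrong one. You argue that, since $\tilde A$ appears unweighted in the target, the $\Phi$-term should cancel against ``non-parallelity of $\nu$'' entering through $\nabla^F_\nu$ versus $\nabla^F_{\partial_t}$. There is no such term. By tensoriality in the direction, $\nabla^F_\nu=-N^{-1}\nabla^F_{\partial_t}$ exactly. The acceleration $\nabla_\nu\nu$ does not appear explicitly in the split formula, since \eqref{conneq} involves $\nabla_X\nu$ only for $X$ tangent to $\Sigma_t$. After conjugating by $\Pi_t$ it enters only through $\Phi$.

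The correct mechanism is your first guess. Because $W$ contains $N^{1/2}$, the spatial part does come out weighted. Since $V$ commutes with multiplication by functions and $[f,A]=-\mathsf{c}_\Sigma(\grad_\Sigma f)$,
\[WAW^{-1}=N^{\frac12}\tilde AN^{-\frac12}=\tilde A+[N^{\frac12},\tilde A]N^{-\frac12}=\tilde A-\tfrac12N^{-1}V\mathsf{c}_\Sigma(\grad_\Sigma N)V^{-1}.\]
Your $\Phi$-term, after the prefactor $N^{-1}$ and $\mathsf{c}(\nu)\mathsf{c}(X)=\tau^{-1}\mathsf{c}_\Sigma(X)$, becomes $+\tfrac12\tau^{-1}N^{-1}V\mathsf{c}_\Sigma(\grad_\Sigma N)V^{-1}$. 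With the factor $\tau^{-1}$ in front of $A$, the two cancel and leave exactly $\tau^{-1}\tilde A$. This is the paper's proof that the first summand of its remainder $R$ vanishes. So the unweighted $\tilde A$ in the target is the outcome of this cancellation, not a reason to look elsewhere. The $\mathsf{c}_\Sigma(\grad\log\rho)$ terms you expect between $VAV^{-1}$ and $UAU^{-1}$ need no cancellation at all; they are absorbed into $\tilde A$ by definition.

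\textbf{Two sign points for the write-up.}

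First, in Step~2 you have $\partial_t=-N\nu$. So with $H=\operatorname{div}\nu$ the first variation reads $\partial_t\dVol_{\Sigma_t}=-NH\,\dVol_{\Sigma_t}$. With your sign the two $H$-terms add up to $-H$ instead of cancelling. The paper sidesteps this by computing $\operatorname{div}\nu=-2N^{-1}\rho^{-1}\partial_t\rho$ directly in coordinates.

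Second, if you do track $\tau$ honestly: the proof of Lemma~\ref{gradN} uses $g_{00}=+N^2$ together with $\nu=+N^{-1}\partial_t$. For the past-directed normal of either metric one finds instead $\nabla_{\partial_t}\nu=\tau^2\grad_\Sigma N$. Correspondingly, for $\tau=1$ the ODE defining $\Phi$ has to use $\mathsf{c}(\tilde\nu)^{-1}$ in place of $\mathsf{c}(\tilde\nu)$. This is also what makes $U\mathsf{c}(\nu)U^{-1}=\mathsf{c}(\nu(0))$ true when $\mathsf{c}(\nu)^2=-1$. The two corrections compensate, so the transported $\Phi$-term is $\tfrac12\mathsf{c}(\nu)\mathsf{c}(\grad_\Sigma N)$ in both signatures and the cancellation above goes through.
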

\begin{proof}
    The Dirac operator has the form (\cite[(2.10)]{vDsplit})
    \[D=\mathsf{c}(\nu)(\tau^2\nabla_\nu-\tau A-\frac{d-1}{2}H),\]
    where $H$ is the mean curvature of the $\Sigma_t$ and again $\tau$ is 1 in the Riemannian case and $\ri$ in the Lorentzian case (the proof in \cite{vDsplit} is for the non-twisted case but works verbatim with a twisting bundles, as \eqref{conneq} and all other identities used are the same as in the non-twisted case).
    We have
    \[W\mathsf{c}(\nu)W^{-1}=\Phi^{-1}\mathsf{c}(\tilde\nu)\Phi=\mathsf{c}(\nu(0)).\]
    and
    \begin{align*}
        W\nabla_\nu W^{-1}&=N^{\frac{1}{2}}\rho\Phi^{-1}\partial_\nu\circ\Phi\rho^{-1} N^{-\frac{1}{2}}\\
        &=N^{\frac{1}{2}}\partial_\nu\circ N^{-\frac{1}{2}}+N^{\frac{1}{2}}\rho\Phi^{-1}(\partial_\nu(\Phi\rho^{-1})) N^{-\frac{1}{2}}\\
        &=-N^{-\frac{1}{2}}(\partial_t+\rho\Phi^{-1}(\partial_t(\Phi\rho^{-1}))N^{-\frac{1}{2}}.
    \end{align*}
    Moreover
    \[WAW^{-1}=N^\frac{1}{2}\tilde AN^{-\frac{1}{2}}=N^{-\frac{1}{2}}(N\tilde A)N^{-\frac{1}{2}}=N^{-\frac{1}{2}}(N^{\frac{1}{2}}\tilde AN^{\frac{1}{2}}+N^{\frac{1}{2}}[N^{\frac{1}{2}},\tilde A])N^{-\frac{1}{2}}.\]
    Overall, we obtain
    \begin{align*}
        WDW^{-1}&= (W\mathsf{c}(\nu)W^{-1})^{-1}(\tau^2W\nabla_\nu W^{-1}-\tau WAW^{-1}-\frac{d-1}{2}H)\\
        &=-\tau^2 \mathsf{c}(\nu(0))^{-1}N^{{-\frac{1}{2}}}(\partial_t+\tau^{-1}(N^{\frac{1}{2}}\tilde AN^{\frac{1}{2}}+R))N^{{-\frac{1}{2}}},
    \end{align*}
    with
    \begin{align*}
        R(t)&:=N^{\frac{1}{2}}[N^{\frac{1}{2}},\tilde A]+\tau\Big(\rho\Phi^{-1}(\partial_t(\Phi\rho^{-1}))+\tau^2\frac{d-1}{2}NH\Big)\\
        &=\Big(N^{\frac{1}{2}}[N^{\frac{1}{2}},\tilde A]+\tau \Phi^{-1}\partial_t\Phi\Big) +\tau\Big(\rho(\partial_t\rho^{-1})+\tau^2\frac{d-1}{2}NH\Big)\\
    \end{align*}
    It remains to show that $R(t)=0$. For the first summand, we compute (using Lemma \ref{gradN}):
    \begin{align*}
        V^{-1}(N^{\frac{1}{2}}[N^{\frac{1}{2}},\tilde A]+\tau \Phi^{-1}\partial_t\Phi)V&=N^{\frac{1}{2}}[N^{\frac{1}{2}}, A]+\tau\Pi_t^{-1}(\partial_t\Phi)\Phi^{-1}\Pi_t\\
        &=-N^\frac{1}{2}\mathsf{c}_\Sigma(\grad_\Sigma(N^{\frac{1}{2}}))-\frac{1}{2}\tau\Pi_t^{-1}\mathsf{c}(\tilde{\nu})\mathsf{c}(\partial_t\tilde\nu)\Pi_t\\
        &=-\frac{1}{2}\mathsf{c}_\Sigma(\grad_\Sigma(N))-\frac{1}{2}\tau\mathsf{c}(\nu)\mathsf{c}(\nabla_{\partial_t}\nu)\\
        &=\frac{1}{2}\mathsf{c}_\Sigma(\nabla_{\partial_t} \nu)-\frac{1}{2}\mathsf{c}_\Sigma(\nabla_{\partial_t}\nu)\\
        &=0.       
    \end{align*}
    For the second summand, we replicate the computation of \cite[(3.5)]{vDsplit}. Let $(e_i)$ be an orthonormal frame with $e_0=\nu$ and choose coordinates on $\Sigma$ (which give coordinates on $M$). Let $\mathscr{W} (X):=-\tau^2\nabla_X(\nu)$ denote the Weingarten map. Using once again that $\<\nu,\partial_\nu \nu\>=0$, we compute
    \begin{align*}
        \tau^2(d-1)H&=\tau^2\tr(\mathscr{W} (X))\\
        &=-\sum\limits_{j=1}^{d-1} g(e_j,\nabla_{e_j}\nu)\\
        &=-\sum\limits_{j=0}^{d-1} g(e_j,\nabla_{e_j}\nu)\\
        &=-\operatorname{div}(\nu)\\
        &=-\det(g)^{-\frac{1}{2}}\partial_t(\det(g)^{\frac{1}{2}}(-N^{-1}))\\
        &=N^{-1}\det(g_t)^{-\frac{1}{2}}\partial_t(\det(g_t)^{\frac{1}{2}})\\
        &=N^{-1}\rho^{-2}\partial_t(\rho^2)\\
        &=2N^{-1}\rho^{-1}\partial_t\rho\\
        &=-2N^{-1}\rho\partial_t\rho^{-1}
    \end{align*}
    thus we have
    \[\rho\partial_t\rho^{-1}+\tau^2\frac{{d-1}}{2}NH=0\]
    and hence 
    \[R=0.\]
\end{proof}

%
%
%
%
%
%
%
%
%
%
\section{Equivariant index theorem} 
\label{sec: g_ind_thm}
\subsection{From Lorentzian to Riemannian Dirac index}

In order to reduce the Lorentzian equivariant index theorem to the Riemannian one, we want to show that the equivariant index of our Lorentzian Dirac operator is equal to that of the constructed Riemannian Dirac operator. To do this, we show that both indices are equal to the equivariant spectral flow of the slice Dirac operators. To disambiguate the Dirac operators, we will from now on write $\hat D$ for the Dirac operator associated to $\hat g$ and keep $D$ for the Lorentzian one. Note that the family $A$ of hypersurface Dirac operaotors is the same in the Riemannian and Lorentzian case. To get a well-defined index, we impose APS boundary conditions on our operators:

\begin{df}
    Let $S$ be an unbounded operator either in $L^2([0,T]; H)$ for some Hilbert space $H$ or in $L^2(M; S^+M)$ (in which case we set $T=1$). Let $(B(t))_{t\in[0,T]}$ be self-adjoint operators either on $H$ or on $L^2(\Sigma_t; S\Sigma_t)$. The closure of $S$ with APS boundary conditions for $B$ is defined as follows:
    take the closure of the operator $S$ (with the maximal domain it is reasonably defined on) and then restrict its domain to those functions/sections $f$ that satisfy $f(0)\in E_{(-\infty,0)}(B(0))$ and $f(T)\in E_{(0,\infty)}(B(T))$.
    
    Let $D(B,T)$ denote the closure of the operator $\partial_t-{\ri}B(t)$ in $L^2([0,T]; H)$ with APS boundary conditions for $B$. Let $D(B):=D(B,1)$. Let $\hat D(B)$ denote the closure of $\partial_t+B$ in $L^2([0,1]; H)$ with APS boundary conditions for $B$. 

    Let $D$ denote the twisted spin-Dirac operator associated to $g$ and $\hat D$ that associated to $\hat g$. We denote by $D_{\APS}$ resp. $\hat D_{\APS}$ the closure of $D$ resp. $\hat D$ with APS boundary conditions for $A$.
\end{df}
\begin{rem}
    A reader familiar with APS boundary conditions in the Riemannian case might wonder why we impose different conditions on the future and past boundary. In fact the APS conditions defined here are the usual APS conditions, the apparent difference comes from the fact that we identify $S^+M|_{\partial M}$ and $S\partial M$ in different ways. This identification depends on a choice of normal vector field. While the usual formulation of the Riemannian APS uses the outward pointing normal vector field, we use the past directed normal vector field, which gives us a different sign on the future boundary.
\end{rem}
We shall use the following theorem from the non-equivariant setting (see \cite[theorems 4.9 and 5.9]{vDR}).
\begin{thm}
    \label{indsf}
    Let $(B(t))_{t\in[0,1]}$ be a strongly continuously differentiable family of self-adjoint operators with a common domain. Then $\hat D(B)$ is Fredholm and
    \[\ind(\hat D(B))=\sfl(B)-\dim\ker(B(1)).\]
    If additionally $D(B,t)$ is Fredholm for all $t\in [0,1]$, then
    \[\ind (D(B))=\sfl(B)-\dim\ker(B(1)).\]
\end{thm}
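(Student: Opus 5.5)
The plan is to treat the Riemannian operator $\hat D(B)$ first, by deformation to a constant family, and then to handle the Lorentzian operator $D(B)$ by a direct description of its kernel and cokernel through the unitary evolution of $\partial_t-\ri B$.

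\emph{Riemannian case.} I would start from a constant family $B(t)\equiv B_0$ and decompose $H$ into eigenspaces of $B_0$ (or spectral subspaces if the spectrum is not discrete). On the $\mu$-eigenspace, $\hat D(B)$ reduces to the scalar operator $f'+\mu f$ with $f(0)\in E_{(-\infty,0)}(B_0)$ and $f(1)\in E_{(0,\infty)}(B_0)$.
\begin{enumerate}
\item For $\mu\neq 0$, one of the two boundary conditions forces $f=0$, and the cokernel is also trivial.
\item For $\mu=0$, the kernel is trivial. The adjoint $-\partial_t+B_0$ carries the complementary conditions $f(0)\in E_{[0,\infty)}(B_0)$ and $f(1)\in E_{(-\infty,0]}(B_0)$, so its kernel consists of the constants in $\ker B_0$.
\end{enumerate}
Uniform resolvent estimates on each eigenspace then give Fredholmness and $\ind\hat D(B)=-\dim\ker B_0$, which is the claimed formula with $\sfl=0$.

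For a general family, I would choose a flow partition $(t_k,a_k)$. On each $[t_{k-1},t_k]$, I would split $B(t)$ into the finite-rank part $P_{[-a_k,a_k]}(B(t))B(t)$ and an invertible remainder. The remainder contributes index zero by the computation above together with stability under homotopy through families with invertible endpoints. The finite-rank part is a finite-dimensional ODE, whose index can be computed explicitly by counting how many eigenvalues change sign.

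The resulting pieces are combined by a gluing formula: the index of the concatenated problem equals the sum over the intervals, up to a boundary correction $\dim\ker B(t_k)$ at each interior cut, and these corrections telescope. Fredholmness in general follows by building a parametrix from the pieces.

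\emph{Lorentzian case.} Every solution of $(\partial_t-\ri B)f=0$ has the form $f(t)=Q(t,0)f(0)$, where $Q(t,s)$ is the unitary propagator. Therefore
\[
\ker D(B)\cong E_{(-\infty,0)}(B(0))\cap Q(1,0)^{-1}E_{(0,\infty)}(B(1)),
\]
and similarly for the cokernel with the complementary projections. Hence $\ind D(B)$ equals the Fredholm index of $P_{(0,\infty)}(B(1))\,Q(1,0)$ restricted to $E_{(-\infty,0)}(B(0))$ and mapped into $E_{(0,\infty)}(B(1))$, taken relative to the complementary pair; that is, it is a relative index of two projections.

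Next I would fix $B(0)$ and vary the endpoint $T$. The hypothesis that every $D(B,t)$ is Fredholm says that this relative index is defined for all $t$. As $T\to0$ it tends to $-\dim\ker B(0)$, matching the constant case. Its jumps in $t$ come only from eigenvalues of $B(t)$ crossing zero, and they occur exactly as in the spectral flow, with the $\dim\ker$ correction tracking the endpoint. This gives $\ind D(B)=\sfl(B)-\dim\ker B(1)$.

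I expect the main obstacle to be the continuity statement in the endpoint: the APS projection $P_{(0,\infty)}(B(T))$ is discontinuous in $T$ exactly where the index changes. I would handle it with the same device as the flow partition, replacing the projection by $P_{(a,\infty)}(B(T))$ for small $a>0$ on each partition interval. These shifted projections vary continuously, so the relative index is locally constant, and the finite-dimensional differences are bookkept by the terms $\dim E_{[0,a_k]}(B(t))$.
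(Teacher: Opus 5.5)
The paper does not prove this statement itself. Both parts are quoted from \cite[theorems 4.9 and 5.9]{vDR}, and the only adjustment is the extra term $-\dim\ker B(1)$, which comes from using $E_{(0,\infty)}(B(1))$ rather than $E_{[0,\infty)}(B(1))$ at the future end. So your argument has to stand on its own, and its Riemannian half has a genuine gap.

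\emph{The splitting does not decouple.} Cutting $B(t)$ into $P_{[-a_k,a_k]}(B(t))B(t)$ plus a remainder does not split $\hat D(B)$. The reason is that $\partial_t$ does not preserve the moving subspaces $E_{[-a_k,a_k]}(B(t))$. You would need a transport unitary $U(t)$ and would then have to control the coupling term $U(t)^{-1}\dot U(t)$.

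\emph{Fredholmness is assumed, not proved.} More seriously, ``the remainder contributes index zero by homotopy'' and ``Fredholmness follows from a parametrix built from the pieces'' both presuppose that the APS problem for $\partial_t+B(t)$ is Fredholm for a \emph{non-constant} family. The constant-coefficient computation does not give this. Homotopy invariance does not either, since it needs Fredholmness along the homotopy. In fact it cannot come for free: your argument uses no compactness, and the Fredholm conclusion is false without it. Take $B_0(t)=R(\omega t)\,\mathrm{diag}(1,-1)\,R(\omega t)^{-1}$ on $\C^2$, with $R(\phi)$ the rotation by $\phi$ and $\kappa=\sqrt{\omega^2-1}$ a root of $\tan\kappa=-\kappa$. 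The substitution $f=R(\omega t)g$ shows that $\hat D(B_0)$ has a one-dimensional kernel, although $\spec B_0(t)=\{\pm1\}$. Hence $B=\one\otimes B_0$ on $\ell^2\otimes\C^2$ is a smooth family of bounded invertible operators with $\sfl(B)=0$, yet $\hat D(B)$ has infinite-dimensional kernel.

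So a hypothesis such as compact resolvents of the $B(t)$ is needed. It is implicit in the quoted statement and holds in the application, where $B$ is elliptic on the closed manifold $\Sigma$. It has to enter through a real estimate, for example $\|f\|_{H^1([0,1];H)}+\|f\|_{L^2([0,1];W)}\lesssim\|\hat D(B)f\|+\|f\|$, combined with compactness of this domain in $L^2([0,1];H)$.

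\emph{The gluing formula is unproved.} The formula with corrections $\dim\ker B(t_k)$ is a substantial splitting theorem in the Riemannian setting, where no propagator exists. It cannot simply be asserted.

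Your Lorentzian half is sound in outline and uses the hypothesis on $D(B,t)$ in the right way, but it needs two repairs.

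\emph{Wrong projection.} The space $E_{(-\infty,0)}(B(0))\cap Q(1,0)^{-1}E_{(0,\infty)}(B(1))$ is the kernel of $P_{(-\infty,0]}(B(1))Q(1,0)\colon E_{(-\infty,0)}(B(0))\to E_{(-\infty,0]}(B(1))$, not of the map you wrote. It is cleanest to say that $\ind D(B,T)$ is the index of the Fredholm pair $\big(E_{(-\infty,0)}(B(0)),\,Q(T,0)^{*}E_{(0,\infty)}(B(T))\big)$.

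\emph{Local constancy needs a different argument.} This is the real issue. Norm continuity of $P_{(a,\infty)}(B(T))$ does not by itself make the relative index locally constant. In your formulation the other subspace, $Q(T,0)E_{(-\infty,0)}(B(0))$, moves with the propagator, which is only strongly continuous. Pull back to time $0$ instead. Since $\partial_TQ(T,0)=\ri B(T)Q(T,0)$,
\[
\frac{d}{dT}\Big(Q(T,0)^{*}P_{(a,\infty)}(B(T))Q(T,0)\Big)=Q(T,0)^{*}\Big(\frac{d}{dT}P_{(a,\infty)}(B(T))+\ri\big[P_{(a,\infty)}(B(T)),B(T)\big]\Big)Q(T,0),
\]
and the commutator vanishes. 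So on each partition interval this projection is norm continuous, while $E_{(-\infty,0)}(B(0))$ stays fixed.

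With that repair, your bookkeeping goes through:
\begin{enumerate}
\item The pair built from $E_{(a_k,\infty)}(B(T))$ has constant index on $[t_{k-1},t_k]$.
\item The pair built from $E_{(0,\infty)}(B(T))$ exceeds it by $\dim E_{(0,a_k]}(B(T))$.
\item The value at $T=0$ is $-\dim\ker B(0)$.
\end{enumerate}
Summing over the partition gives $\ind D(B)=\sfl(B)-\dim\ker B(1)$, as you intended.
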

\begin{rem}
    The extra term $-\dim\ker(B(1))$ compared to \cite{vDR} comes in because APS boundary conditions were defined with $E_{[0,\infty)}(A(1))$ instead of $E_{(0,\infty)}(A(1))$ for convenience in that reference.
\end{rem}
Using our preparations on equivariant index and spectral flow, we can obtain an equivariant version of this.
\begin{thm}
\label{indsfeq}
    Let $(B(t))_{t\in[0,1]}$ be a strongly continuously differentiable family of self-adjoint operators with a common domain in a Hilbert space $H$. Assume that all  $B(t)$ are equivariant with respect to the action of some group element $\gamma$. Then
    \[\ind_\gamma(\hat D(B))=\sfl_\gamma(B)-\tr(\gamma|_{\ker B(1)}).\]
    If additionally $D(B,t)$ is Fredholm for all $t\in [0,1]$, then
    \[\ind_\gamma (D(B))=\sfl_\gamma(B)-\tr(\gamma|_{\ker B(1)}).\]
\end{thm}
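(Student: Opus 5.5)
The idea is to reduce Theorem~\ref{indsfeq} to its non-equivariant counterpart, Theorem~\ref{indsf}, by decomposing into eigenspaces of $\gamma$, as prepared in Section~\ref{sec: G_ind_G_sf}. The Hilbert space on which $\hat D(B)$ and $D(B)$ act is $L^2([0,1];H)$, and $\gamma$ acts pointwise in $t$. Its eigenspaces are therefore $L^2([0,1];E_\lambda(\gamma))$, with $E_\lambda(\gamma)\subseteq H$.

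First I check that everything restricts well to these eigenspaces. Since each $B(t)$ commutes with $\gamma$, Lemma~\ref{interT} shows that $B(t)$ maps $E_\lambda(\gamma)\cap\mathrm{dom}$ into $E_\lambda(\gamma)$. The restricted family $B_\lambda(t):=B(t)|_{E_\lambda(\gamma)}$ is then a family of self-adjoint operators on the closed subspace $E_\lambda(\gamma)$ with common domain $\mathrm{dom}(B(t))\cap E_\lambda(\gamma)$. Strong continuous differentiability is inherited, because it is a statement about $B(t)x$ for $x$ in the domain.

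Next I show that the operators and boundary conditions are compatible with the decomposition. The spectral projections $P_{(-\infty,0)}(B(0))$ and $P_{(0,\infty)}(B(1))$ commute with $\gamma$, so the APS conditions split. Hence $\hat D(B)$ is the orthogonal direct sum of $\hat D(B_\lambda)$ over the eigenvalues, plus a summand on the orthogonal complement of the closed span of the eigenspaces, and likewise for $D(B)$. For $\gamma$ unitary with possibly continuous spectrum, I handle this by applying the direct sum decomposition after Lemma~\ref{interT} to the finite set of eigenvalues that matter.

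With the decomposition in place, the formula follows in three steps.
\begin{enumerate}
\item Theorem~\ref{indsf} applied to $\hat D(B)$ shows it is Fredholm. By Lemma~\ref{finiteES}, its kernel and cokernel lie in finitely many eigenspaces, so Theorem~\ref{inddeco} gives $\ind_\gamma(\hat D(B))=\sum_\lambda\lambda\ind(\hat D(B)|_{E_\lambda})$.
\item The restriction $\hat D(B)|_{E_\lambda}$ is exactly $\hat D(B_\lambda)$. Theorem~\ref{indsf} then gives $\ind(\hat D(B_\lambda))=\sfl(B_\lambda)-\dim\ker B_\lambda(1)$.
\item Summing with weights $\lambda$, Theorem~\ref{sfdeco} turns $\sum_\lambda\lambda\sfl(B_\lambda)$ into $\sfl_\gamma(B)$, and Lemma~\ref{finiteES} turns $\sum_\lambda\lambda\dim\ker B_\lambda(1)$ into $\tr(\gamma|_{\ker B(1)})$.
\end{enumerate}
Throughout, only finitely many $\lambda$ contribute nonzero terms, since every relevant finite-dimensional space is contained in finitely many eigenspaces.

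For the Lorentzian operator the argument is the same, except that the hypothesis of Theorem~\ref{indsf} must be verified on each eigenspace. I need that $D(B_\lambda,t)$ is Fredholm for all $t$. This holds because $D(B_\lambda,t)$ is a direct summand of the Fredholm operator $D(B,t)$, and a restriction of a Fredholm operator to a reducing closed subspace is Fredholm. Its kernel and cokernel are subspaces of those of $D(B,t)$, and its range is closed because it is the intersection of the closed range of $D(B,t)$ with the invariant subspace.

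I expect the main obstacle to be this domain bookkeeping: showing that the closure of $\partial_t-\ri B$ with APS conditions, restricted to $L^2([0,1];E_\lambda(\gamma))$, coincides with the closure built from $B_\lambda$. The inclusion of one domain in the other is clear. For the reverse inclusion I would use that the orthogonal projection onto $E_\lambda(\gamma)$, applied pointwise in $t$, commutes with $\partial_t$ and $B(t)$ and preserves the APS boundary subspaces. It therefore maps the domain of $D(B)$ into itself, so that $D(B)=\bigoplus_\lambda D(B_\lambda)\oplus D(B)|_{\text{rest}}$ holds as closed operators.
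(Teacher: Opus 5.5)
Your proposal is correct and follows essentially the same route as the paper. Both decompose $L^2([0,1];H)$ into the $\gamma$-eigenspaces, identify $D(B)|_{E'_\lambda(\gamma)}$ and $\hat D(B)|_{E'_\lambda(\gamma)}$ with $D(B_\lambda)$ and $\hat D(B_\lambda)$, apply the non-equivariant Theorem~\ref{indsf} on each eigenspace, and reassemble via Theorems~\ref{inddeco} and~\ref{sfdeco} together with Lemma~\ref{finiteES}. You give more detail than the paper's one-line assertions on the reducing-subspace domain argument and on why the restricted $D(B_\lambda,t)$ remain Fredholm.
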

\begin{proof}
    Let $E_I(\gamma)$ denote the spectral subspaces of the action of $\gamma$ on $H$ and let $E_I'(\gamma)$ those of the action on $L^2([0,1]; H)$. The restriction of the closure of an operator to a closed invariant subspace is the closure of the restricted operator. Taking positive/negative subspaces of the boundary operator also commutes with restricting. Thus we have
    \[D(B|_{E_I(\gamma)})=D(B)|_{E_I'(\gamma)}\]
    and
    \[\hat D(B|_{E_I(\gamma)})=\hat D(B)|_{E_I'(\gamma)}.\]
    As restriction to a closed invariant subspace preserves Fredholmness, we know that
    \[D(B|_{E_I(\gamma)},t)\]
    is Fredholm for all $t\in [0,1]$. We can now combine theorems \ref{inddeco}, \ref{sfdeco}, and \ref{indsf} to calculate
    \begin{align*}
    \ind_\gamma(D(B))&=\sum\limits_{\lambda\in \Eig(\gamma)}\lambda\ind(D(B)|_{E_\lambda'(\gamma)})\\
    &=\sum\limits_{\lambda\in \Eig(\gamma)}\lambda\ind(D(B|_{E_\lambda(\gamma)}))\\
    &=\sum\limits_{\lambda\in \Eig(\gamma)}\lambda\sfl(B|_{E_\lambda(\gamma)})-\dim\ker(B(1)|_{E_\lambda(\gamma)})\\
    &=\sfl_\gamma(B)-\tr(\gamma|_{\ker B(1)}).
    \end{align*}
    The same calculation with hats shows 
    \[\ind_\gamma(\hat D(B))=\sfl_\gamma(B)-\tr(\gamma|_{\ker B(1)}).\]
\end{proof}
Now the work we did in relating the Dirac operators to abstract ones allows us to apply this theorem to Dirac operators as well.
\begin{thm}
    \label{indsfdirac}
    For any $\gamma\in \Gamma$, we have
    \[\ind_\gamma(D_{\APS})=\sfl_{\gamma} (A)-\tr(\gamma|_{\ker A(1)})=\ind_\gamma(\hat D_{\APS}).\]
\end{thm}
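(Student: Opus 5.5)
The plan is to transport both Dirac operators, via the unitary $W$ of Definition~\ref{dfisos}, to the abstract model operators of Theorem~\ref{indsfeq}, and then read off the equivariant index from the equivariant spectral flow.

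By Theorem~\ref{Diractoabstract} (Lorentzian case, $\tau=\ri$),
\[WDW^{-1}=\mathsf{c}(\nu(0))N^{-\frac12}(\partial_t-\ri B)N^{-\frac12},\qquad B:=N^{\frac12}\tilde AN^{\frac12}.\]
In the Riemannian case ($\tau=1$) we get $W\hat DW^{-1}=-\mathsf{c}(\nu(0))N^{-\frac12}(\partial_t+B)N^{-\frac12}$ with the same $B$. Since $N\equiv1$ near $\partial M$, we have $B(0)=\tilde A(0)=A(0)$ and $B(1)=\tilde A(1)=V(1)A(1)V(1)^{-1}$.

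The first step is to show that the APS conditions match. For $f=N^{-\frac12}Wu$ we have $f(0)=Wu(0)$ and $f(1)=V(1)u(1)$. Hence $u(0)\in E_{(-\infty,0)}(A(0))$ if and only if $f(0)\in E_{(-\infty,0)}(B(0))$, and similarly at $t=1$.

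The second step is to check that the outer factors are harmless. Multiplication by $N^{\pm\frac12}$ is a bounded, $\Gamma$-invariant isomorphism on $L^2$, since $N$ is smooth, positive and invariant. Clifford multiplication by $\mathsf{c}(\nu(0))$ is a bounded, invertible, equivariant map $F^+\to F^-$, constant in $t$. $W$ is unitary and equivariant. Conjugating or composing with such maps preserves domains (the maximal domains and their closures correspond) and maps kernels and cokernels isomorphically and $\gamma$-equivariantly. So
\[\ind_\gamma(D_{\APS})=\ind_\gamma(D(B)),\qquad \ind_\gamma(\hat D_{\APS})=\ind_\gamma(\hat D(B)).\]
The cokernel needs a little care, because the left factors are not unitary. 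Here I would argue that kernels of adjoints correspond through the adjoint isomorphisms, which still commute with $\gamma$. Alternatively, the trace of $\gamma$ on the cokernel $\operatorname{coker}=H_2/\operatorname{ran}$ is invariant under equivariant isomorphisms.

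The third step is to verify the hypotheses of Theorem~\ref{indsfeq} for $B$ acting in $H=L^2(\Sigma_0;F^+|_{\Sigma_0})$:
\begin{enumerate}
\item The common domain is $H^1$, because $\tilde A(t)$ is a smooth family of first-order elliptic operators and conjugating by smooth $N^{\frac12}$ preserves $H^1$.
\item The family is strongly $C^1$ by smoothness of all ingredients.
\item Each $B(t)$ is self-adjoint, since $N^{\frac12}\tilde A N^{\frac12}$ is symmetric and elliptic.
\item $B$ is $\gamma$-equivariant, because $V$ and $N$ are.
\end{enumerate}
The extra hypothesis in the Lorentzian case is that $D(B,t)$ is Fredholm for all $t$. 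I would obtain this by rescaling time: on $[0,t]$, the operator $D(B,t)$ corresponds to the Lorentzian Dirac operator with APS conditions on the globally hyperbolic region between $\Sigma_0$ and $\Sigma_t$. By \cite{BSglobal} (or \cite{vDR}) that operator is Fredholm. Its lapse is not $1$ near $\Sigma_t$, but Fredholmness of the abstract operator is what \cite{vDR} establishes for such families, so I would cite it there. This is the step I expect to need the most care.

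Finally, Theorem~\ref{indsfeq} gives both indices as $\sfl_\gamma(B)-\tr(\gamma|_{\ker B(1)})$. To finish:
\begin{enumerate}
\item $B(t)$ is homotopic to $\tilde A(t)$ through $N_s:=(1-s)+sN$ with fixed endpoints, since $N=1$ at $t=0,1$. By Theorem~\ref{sfaxioms}(3), $\sfl_\gamma(B)=\sfl_\gamma(\tilde A)$.
\item $\sfl_\gamma(\tilde A)=\sfl_\gamma(A)$, because $V(t)$ is an equivariant unitary and traces of $\gamma$ on spectral subspaces are preserved.
\item $\tr(\gamma|_{\ker B(1)})=\tr(\gamma|_{\ker A(1)})$ for the same reason.
\end{enumerate}
This proves the stated equality.
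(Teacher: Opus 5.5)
Your proposal is correct and follows the paper's proof essentially step by step. You conjugate by $W$ using Theorem~\ref{Diractoabstract}, use $N\equiv 1$ near $\partial M$ to match the APS conditions, discard the equivariant isomorphisms $\mathsf{c}(\nu(0))N^{-1/2}$ and $N^{-1/2}$, apply Theorem~\ref{indsfeq}, and pass from $B$ to $\tilde A$ to $A$ by a fixed-endpoint homotopy and an equivariant unitary equivalence.

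Your extra care with the cokernel, and your attempt to check that $D(B,t)$ is Fredholm for every $t$, go beyond the paper, which invokes Theorem~\ref{indsfeq} without verifying that hypothesis. One correction there: on $[0,t]\times\Sigma$ the geometric APS condition at $\Sigma_t$ becomes $f(t)\in E_{(0,\infty)}(\tilde A(t))$, not $f(t)\in E_{(0,\infty)}(B(t))$. So transferring Fredholmness from \cite{BSglobal} needs one more step: the two spectral projections differ by a compact operator, because they have the same principal symbol ($N>0$ is scalar).
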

\begin{proof}
    By Theorem \ref {Diractoabstract}, we have
    \[WDW^{-1}=\mathsf{c}(\nu(0))N^{-\frac{1}{2}}(\partial_t-{\ri}B)N^{-\frac{1}{2}}\]
    for
    \[B=N^\frac{1}{2}\tilde AN^\frac{1}{2}\]
    and $\tilde A$ unitarily equivalent to $A$. As $A$ is a family of self-adjoint elliptic operators with smooth coefficients, so is $B$. In particular, $B$ is strongly continuously differentiable on the common domain $H^1(\Sigma; S\Sigma)$. As $N$ was constructed to be constantly $1$ near the boundary, multiplication by powers of $N$ preserves boundary conditions and $W$ maps sections with APS-boundary conditions for $A$ to sections with APS boundary conditions for $B$ bijectively. Thus we have
    \[WD_{APS}W^{-1}=\mathsf{c}(\nu(0))N^{-\frac{1}{2}}D(B)N^{-\frac{1}{2}}.\]
    As compositions with equivariant isomorphisms preserve the equivariant index we get (using Theorem \ref{indsfeq})
    \[\ind_\gamma(D_{\APS})=\ind_\gamma(D(B))= \sfl_{\gamma} (B) + \tr (\gamma|_{\ker B (1)}).\]
    Using again that $N$ is $1$ near the boundary, we get a continuous homotopy through self-adjoint elliptic $\gamma$-equivariant operators from $\tilde A$ to $B$ with fixed endpoints via
    \[s\mapsto N^{\frac{s}{2}}\tilde A N^{\frac{s}{2}}.\]
    As $\gamma$-spectral flow is preserved under fixed endpoint homotopies and equivariant unitary equivalence, we have
    \[\ind_\gamma(D_{\APS})=\sfl_\gamma(B)-\tr(\gamma|_{\ker B(1)})=\sfl_\gamma(\tilde A)-\tr(\gamma|_{\ker \tilde A(1)})=\sfl_\gamma(A)-\tr(\gamma|_{\ker A(1)}).\]
    The same argument with hats and $+B$ instead of $-{\ri}B$ yields
    \[\ind_\gamma(\hat D_{\APS})=\sfl_\gamma(A)-\tr(\gamma|_{\ker A(1)}).\]
\end{proof}
\begin{rem}
    The index $=$ spectral flow theorem in the Riemannian case has already been shown in \cite[Theorem 3.17]{HoYa}, where it is derived more generally in a K-theoretic framework. 
\end{rem}
As a corollary, we obtain the following:
\begin{cor} \label{cor: g_ind_Dirac_op_metric_deformation}
    The equivariant index of the Lorentzian Dirac operator $D_{\APS}$ is preserved under continuous deformations of the slice metrics $g_t$, the lapse function $\eta$ and the twisting connection $\nabla^E$, as long as all quantities are held constant on the boundary.
\end{cor}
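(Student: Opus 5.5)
By Theorem~\ref{indsfdirac}, for every $\gamma\in\Gamma$ we have
\[\ind_\gamma(D_{\APS})=\sfl_\gamma(A)-\tr(\gamma|_{\ker A(1)}),\]
where $A=(A(t))_{t\in[0,1]}$ is the family of twisted hypersurface Dirac operators. The plan is therefore to show that the right-hand side is invariant under the stated deformations. Let $s\in[0,1]$ parametrise a continuous deformation $(g_t^s, N^s, \nabla^{E,s})$, all $\Gamma$-invariant, with the data fixed on (a neighbourhood of) $\Sigma_0\sqcup\Sigma_1$ and $N^s\equiv 1$ near the boundary, so that the setup of Section~\ref{sec: set-up} and Theorem~\ref{indsfdirac} applies for each $s$. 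Invariance of the group action is needed for $D^s$ to be $\gamma$-equivariant, so I would take it as implicit in ``deformation'' here. For each $s$ this yields a family $A^s(t)$, namely the Dirac operator on $(\Sigma,g^s_t)$ twisted by $(E|_{\Sigma_t},\nabla^{E,s})$. Note that $A^s(t)$ does not depend on the lapse at all, so deforming $N$ changes nothing in the formula.

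Because the data is held fixed on the boundary, the endpoints $A^s(0)=A(0)$ and $A^s(1)=A(1)$ are independent of $s$. In particular the term $\tr(\gamma|_{\ker A^s(1)})$ is constant in $s$. It remains to see that $\sfl_\gamma(A^s)$ is constant, and for this I would invoke the homotopy invariance with fixed endpoints, Theorem~\ref{sfaxioms}(3). That requires $(t,s)\mapsto A^s(t)$ to be a continuous two-parameter family of $\gamma$-equivariant self-adjoint Fredholm operators in the gap topology.

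The main obstacle is that $A^s(t)$ acts on $L^2(\Sigma; S\Sigma_t\otimes E)$ with inner products, and even spinor bundles, depending on $(t,s)$. As in Definition~\ref{dfisos}, I would conjugate by unitaries $V^s(t)$ to a fixed Hilbert space $L^2(\Sigma_0; S\Sigma_0\otimes E)$. These unitaries are built from parallel transport, the $\Phi$-correction, and $\rho$, all of which depend continuously on the metric, and they are $\Gamma$-equivariant. Conjugation by them preserves $\gamma$-spectral flow and kernels. The conjugated operators $\tilde A^s(t)$ are first-order elliptic with coefficients depending continuously on $(t,s)$ in $C^1$, and they have the common domain $H^1$. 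So they are norm-continuous as maps $H^1\to L^2$, hence gap continuous, which is exactly the criterion stated before the definition of spectral flow.

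One further point needs checking: the unitaries for different $s$ differ at $t=0,1$, which would affect the fixed-endpoint hypothesis. Since the data are fixed near the boundary, the parallel transports there coincide and $V^s(0)=V^s(1)$ is independent of $s$, so the endpoints stay fixed after conjugation too. Combining Theorem~\ref{sfaxioms}(3) with Theorem~\ref{indsfdirac} then gives constancy of $\ind_\gamma(D^s_{\APS})$.
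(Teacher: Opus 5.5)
Your reduction is the paper's: by Theorem~\ref{indsfdirac} it suffices that $\sfl_\gamma(A)-\tr(\gamma|_{\ker A(1)})$ is unchanged. The paper simply asserts that the deformation gives a fixed-endpoint homotopy of $A$ and invokes Theorem~\ref{sfaxioms}(3), without saying how the varying Hilbert spaces are identified.

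Your attempt to make that identification explicit is where the argument breaks, because the claim that $V^s(1)$ is independent of $s$ is false. It is true that $V^s(0)=\mathrm{id}$ for every $s$. But $V^s(1)=\rho(1)\Phi^s(1)^{-1}\Pi^s_1$ involves two quantities that see the interior:
\begin{itemize}
\item $\Pi^s_1$ is parallel transport along the entire timeline from $\Sigma_1$ down to $\Sigma_0$;
\item $\Phi^s(1)$ is obtained by integrating the ODE of Definition~\ref{dfisos} over all of $[0,1]$.
\end{itemize}
Both therefore depend on the interior data $g^s_t$, $N^s$, $\nabla^{E,s}$ that you are deforming. Only $\rho(1)$ is fixed, since it depends only on $g_0,g_1$. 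For instance, adding to $\nabla^E$ an interior term $-\ri s\chi(t)K\,\rd t$ with $\chi$ supported in $(0,1)$ multiplies $\Pi^s_1$ by $\exp(-\ri s(\int\chi)K)$ on the $E$-factor. Then $\tilde A^s(1)=V^s(1)A(1)V^s(1)^{-1}$ genuinely moves with $s$ as soon as this factor does not commute with $A(1)$. So your conjugated two-parameter family does not have fixed endpoints, and Theorem~\ref{sfaxioms}(3) cannot be applied as you state it.

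The gap is easy to close. Because the data near $\Sigma_1$ are fixed, $s\mapsto V^s(1)V^0(1)^{-1}$ is a norm-continuous path of $\gamma$-equivariant unitaries of $L^2(\Sigma_0;S\Sigma_0\otimes E)$. Hence the right edge $s\mapsto\tilde A^s(1)$ of the square $[0,1]^2$ is isospectral, with $E_I(\tilde A^s(1))=V^s(1)E_I(A(1))$. Take a single flow-partition interval with $\pm a\notin\spec A(1)$. Since $V^s(1)$ commutes with $\gamma$, the $\gamma$-spectral flow of the right edge is $\tr(\gamma|_{E_{[0,a]}(A(1))})-\tr(\gamma|_{E_{[0,a]}(A(1))})=0$. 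The left edge $s\mapsto\tilde A^s(0)=A(0)$ is constant. The path along the bottom and right edges is homotopic with fixed endpoints to the path along the left and top edges. Theorem~\ref{sfaxioms}(3) and (5) then give $\sfl_\gamma(\tilde A^0)=\sfl_\gamma(\tilde A^1)$. Finally, $\tr(\gamma|_{\ker\tilde A^s(1)})=\tr(\gamma|_{\ker A(1)})$ is constant by equivariant unitary equivalence, which completes your argument.
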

\begin{proof}
    Such a deformation of the metric leads to a fixed endpoint homotopy for the associated hypersurface Dirac operators $A$. As this preserves the $\gamma$-spectral flow,
    \[\ind_\gamma(D_{\APS}) = \sfl_\gamma(A)-\tr(\gamma|_{\ker A(1)})\]
    remains constant under the deformation.
\end{proof}
\begin{rem}
    Unlike in the Riemannian case, the above does not follow immediately from deformation invariance of the index, as the domain of (the closure of) the Lorentzian Dirac operator varies with a change in the metric, independently from boundary conditions.
\end{rem}

%
%
%
%
%
%
%
%
%
%
\subsection{A Riemannian equivariant index theorem}
In this section, we introduce the characteristic forms that arise in both the Riemannian and the Lorentzian index theorems and present the version of the Riemannian index theorem that we will be using.

For this section and the next, we allow $M$ to be any compact Riemannian spin manifold with boundary that has a structure-preserving action of a group $\Gamma$ (in particular, this could be our $(M,\hat g)$ from before). Those definitions that also make sense on a Lorentzian manifold we will also use for our Lorentzian spin spacetime $(M,g)$.

Let $M_{\gamma} := \{ x \in M \,|\, x = \gamma . x \}$ be the fixed-point set of $\gamma \in \Gamma$. 
By, 
e.g.~\cite[Theorem 5.1]{Kobayashi_Springer_1995},~\cite[Theorem 1.10.15]{Klingenberg_deGruyter_1995}), 
$M_\gamma$ is a smooth, closed, totally geodesic submanifold of $M$ that is not necessarily connected and may have non-constant dimension. We denote the induced metric on $M_\gamma$ by $\fh$. One has the orthogonal splitting 
\begin{equation*}
    \rT \! M |_{M_{\gamma}} = \rT \! M_{\gamma} \oplus M_{\gamma}^{\perp}, 
\end{equation*}
where $M_{\gamma}^{\perp}$ is the normal bundle of $M_{\gamma}$. 
Since $M_{\gamma}$ is totally geodesic, the Levi-Civita connection on $M$ induces the Levi-Civita connection $\nabla$ on $M_{\gamma}$. 
Let $R$ denote the Riemann curvature of $M_\gamma $ and $R^{\perp}$ the curvature of $M_{\gamma}^{\perp}$ with respect to the restriction of the Levi-Civita connection of $M$.  
The $\hat{A}$-form of $M_{\gamma}$ is now defined by (see 
e.g.~\cite[p. 187]{Berline_Springer_2004}) 
\begin{equation} \label{eq: def_A_hat_genus_fixed_pt_set}
    \hat{A} (M_{\gamma}) := \hat{A} (\nabla) := \mathrm{det}^{1/2} \frac{R/2}{ \sinh (R/2)}. 
\end{equation}
Here $\det^{1/2}$ of a differential form that has to be understood as the Pfaffian of the form, see, 
e.g.~\cite[Proposition 1.36 and (1.29)]{Berline_Springer_2004} 
for details. 
%
%
%

For any vector bundle $\sE$, we get a map 
(see 
e.g.~\cite[Proposition 1.31 and Definition 1.32]{Berline_Springer_2004}) 
\begin{equation*}
    \Str_\mathscr{E} : C^{\infty} \big( M; \wedge \coTan M \otimes \End \mathscr{E} \big) \to C^{\infty} (M; \wedge \coTan M), 
    \quad 
    \Str_\mathscr{E} (\alpha \otimes a) := \alpha \Str a,  
\end{equation*}
where $\wedge \coTan M \to M$ is the bundle of exterior differential forms. 
The localised Chern character form of the twisting bundle $E$ on $M_\gamma$ is defined 
by (\cite[p. 190]{Berline_Springer_2004}) 
\begin{equation} \label{eq: def_Chern_char_form_twist_bundle}
    \ch_{\gamma} (\nabla^{E}) := \Str_{E} \big( \gamma^{E} \exp (- \Omega) \big),   
\end{equation}
where $\gamma^{E} \in \End (E |_{M_{\gamma}})$ and $\Omega$ is the curvature of $\nabla^{E}$ restricted to $M_{\gamma}$.

%
%
%
\begin{rem}
    The normalisation 
    (following~\cite[p. 47]{Berline_Springer_2004} 
    and~\cite[(11.17) and (11.22)]{Lawson_PUP_1989}) 
    of the $\hat{A}$-form and Chern character form used in this article differs from that preferred by topologists by a $- 2 \pi {\ri}$ factor. 
    In other words, the substitutions $R \mapsto {\ri} R / 2 \pi$ in~\eqref{eq: def_A_hat_genus_fixed_pt_set} and $\Omega \mapsto {\ri} \Omega / 2\pi$ in~\eqref{eq: def_Chern_char_form_twist_bundle} give the conventional definitions. 
\end{rem}
%
%
%

The form we are interested in, which will appear as the integrand in the index theorem is the following.
\begin{df}
    We  define
    \[{\ka}  
    := \frac{\hat{A} (\nabla) \wedge \ch_{\gamma}(\nabla^{E})}{\det^{1/2} \big( \one - \gamma^{\perp} \exp (- {R}^{\perp}) \big)}.\]
\end{df}

%
%
%
\begin{rem} \label{lem: BGVD_ind_thm_spin}
    If $M_{\gamma}$ has a spin-structure then 
    (cf.~\citealp[p. 193]{Berline_Springer_2004}) 
    \begin{equation*}
        \ka = \ri^{-\ell^{\perp}} \frac{\hat{A} (M_{\gamma}) \wedge \ch_{\gamma} (\nabla^E)}{\ch_{\gamma} (\sS M_{\gamma}^{\perp})},  
    \end{equation*}
    where $\sS M_{\gamma}^{\perp}$ is the spinor bundle over $M_{\gamma}^{\perp}$. 
\end{rem}
%
%
%

To compute the index of $\tilde D$, we need a suitable version of the Riemannian index theorem as given below.
\begin{thm}[\citet*{Berline_Springer_2004},~\cite{Donnelly_IndianaUMJ_1978}]
\label{thm: BGVD_ind_thm}
Let $M$ be a smooth compact Riemannian spin manifold with boundary $\partial M$. Let $E$ be a smooth hermitian vector bundle with a compatible connection. Assume that there is a bundle map $\gamma$ that descends to an isometry on $M$ and also preserves all further structures. Assume furthermore that all structures are of product form near the boundary.  Let $\slashed D$ be the twisted Dirac operator on positive chirality spinors. Let $A_{\partial M}$ be the Dirac operator on the boundary, with the spin structure induced via the outward pointing normal vector field. Then we have
\begin{equation*}
    \gInd (\slashed{D}_{\APS}) 
    = 
    \int_{M_{\gamma}} (2 \pi \ri)^{- \ell} \ri^{-\ell^{\perp}} \ka 
    - 
    \frac{1}{2}(\eta_\gamma(A_{\partial M})+\tr(\gamma|_{\ker(A_{\partial M})})), 
\end{equation*}
where $\ell:=\frac{\dim(M_\gamma)}{2}$ and $\ell^\bot:=\frac{\mathrm{codim} (M_\gamma)}{2}$.
\end{thm}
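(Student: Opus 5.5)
The plan is to obtain the statement by splicing two existing results rather than proving anything new. \citet[Theorem 1.2]{Donnelly_IndianaUMJ_1978} handles the boundary: it gives the equivariant APS index as an interior integral of a local fixed-point density, minus $\frac{1}{2}\big(\eta_\gamma(A_{\partial M})+\tr(\gamma|_{\ker A_{\partial M}})\big)$. \citet[Theorem 6.16]{Berline_Springer_2004} handles the interior: it computes that local density explicitly on closed manifolds. What remains is to check that the two fit together, in particular that the interior density is purely local and that the conventions agree.

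\emph{Step 1 (Donnelly's reduction).} I will use that all structures are of product form near $\partial M$. Glue the half-infinite cylinder $(-\infty,0]\times\partial M$ (or $[0,\infty)\times\partial M$) to $M$ and extend $\gamma$ $t$-independently. Build a parametrix for $e^{-s\slashed D^*\slashed D}$ and $e^{-s\slashed D\slashed D^*}$ by patching the closed-manifold heat kernel in the interior with the explicit cylinder heat kernel for $\partial_u+A_{\partial M}$ with APS conditions. The equivariant McKean--Singer formula, $\gInd(\slashed D_{\APS})=\tr(\gamma e^{-s\slashed D^*\slashed D})-\tr(\gamma e^{-s\slashed D\slashed D^*})$, is independent of $s$, since nonzero eigenspaces of $\slashed D^*\slashed D$ and $\slashed D\slashed D^*$ are $\gamma$-equivariantly isomorphic. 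Letting $s\to0$, the interior contribution is the constant term in the small-time expansion of the pointwise supertrace of $\gamma\,k_s(\gamma^{-1}x,x)$. Off-diagonal Gaussian decay localises this to a neighbourhood of $M_\gamma$. The cylinder contribution is computed exactly by a Laplace-transform computation in the normal variable, and yields $-\frac12(\eta_\gamma+\tr(\gamma|_{\ker}))$, with $\eta_\gamma$ as defined in Section~\ref{sec: set-up}.

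\emph{Step 2 (identifying the density).} The density from Step 1 agrees with the one for a closed manifold: it depends only on the germ of the geometry near $M_\gamma$. For that density, \citet[Theorem 6.16]{Berline_Springer_2004} gives an explicit formula. The method is Getzler rescaling along $M_\gamma$ combined with the Mehler formula in the normal directions, where $\gamma^\perp$ acts without fixed vectors. The result is $(2\pi\ri)^{-\ell}\ri^{-\ell^\perp}\ka$, with the factor $\det^{1/2}\big(\one-\gamma^\perp\exp(-R^\perp)\big)^{-1}$ arising from the normal Gaussian integral. Near the boundary, $M_\gamma$ is itself a product $[0,\epsilon)\times(\partial M)_\gamma$, so $\ka$ is an honest integrable form up to $\partial M_\gamma$ and no extra boundary term arises from the interior density.

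\emph{Main obstacle: conventions.} The analysis is already in the literature, so the real work is bookkeeping. I must check the following against each other: Donnelly's sign of the $\eta$ term and whether his APS projection includes the kernel; the outward versus past normal convention for identifying $S^+M|_{\partial M}$ with $S\partial M$; the BGV normalisation of $\hat A$ and $\ch_\gamma$ (the $-2\pi\ri$ factor) together with the orientation of $M_\gamma$ and the choice of square root in $\det^{1/2}$; and the factor $\ri^{-\ell^\perp}$. I would first run the identity-element case $\gamma=\one$, which must reduce to the classical APS theorem, and a model rotation of $\R^2$, to fix all signs. Then I would invoke the two theorems verbatim.
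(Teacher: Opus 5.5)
Your plan is the paper's. Donnelly's Theorem 1.2 reduces $\gInd(\slashed{D}_{\APS})$ to $\mathfrak{A}-\frac12(\eta_\gamma(A_{\partial M})+\tr(\gamma|_{\ker A_{\partial M}}))$, where $\mathfrak{A}$ is the constant term in the small-time expansion of the equivariant heat supertrace integrated over $M$, and $\mathfrak{A}$ is then evaluated by the Berline--Getzler--Vergne localisation to $M_\gamma$.

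One correction: you must cite the local result, BGV Theorem 6.11 together with Propositions 3.21 and 6.15 and Definition 6.13, not the global Theorem 6.16. Theorem 6.16 only computes integrals over closed manifolds, whereas $\mathfrak{A}$ is an integral over $M$ alone, so you cannot invoke it verbatim. The bookkeeping you defer is exactly what the paper carries out: $(4\pi)^{-\ell}(-2\ri)^{d/2}2^{-\ell^\perp}=(2\pi\ri)^{-\ell}\ri^{-\ell^\perp}$, with $M_\gamma$ oriented so that $\varepsilon(\gamma)=1$.
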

%
%
%
\begin{rem} \label{rem: orientation_fixed_pt_set}
The action of $\gamma$ induces an orientation on $M_\gamma$
(see ~\cite[Proposition 6.14]{Berline_Springer_2004}). 
In Theorem~\ref{thm: global_g_ind} and Theorem~\ref{thm: BGVD_ind_thm}, we have endowed $M_{\gamma}$ with that orientation. 
If $M_\gamma$ is equipped with some other natural orientation (e.g. if it is zero-dimensional) the change in orientation may introduce further signs in the integrand (c.f. ~\cite[p. 191 and 193]{Berline_Springer_2004}).
\end{rem}
%
%
%
\begin{proof}
\citet{Donnelly_IndianaUMJ_1978} uses a standard doubling construction to embed $M$ into a closed manifold and considers the heat kernels $f_1$ resp. $f_2$ of the operators $\slashed{D}^*\slashed{D}$ resp. $\slashed{D} \slashed{D}^*$ on the double. He then shows (\citep[Theorem 1.2 and (1.1)]{Donnelly_IndianaUMJ_1978}) that 
\begin{equation}
    \gInd (\slashed{D}_{\APS}) = \mathfrak{A} - 
    \frac{1}{2}(\eta_\gamma(A_{\partial M})+\tr(\gamma|_{\ker(A_{\partial M})})), 
\end{equation}
where $\mathfrak{A}$ is the constant part in the small-$t$ asymptotic expansion of 
\[\int\limits_M \tr(\gamma^{-1}f_1(x,\gamma x,t))-\tr(\gamma^{-1}f_2(x,\gamma x,t)))dx.\]

Now we employ~\cite[Theorem 6.11]{Berline_Springer_2004}. We use the notation $\Phi_j$ as in that theorem and $\sigma$ as in ~\cite[Definition 3.31]{Berline_Springer_2004} and denote by $\dVol^\bot$ the unique form such that $\dVol_M=\dVol_{M_\gamma}\wedge \dVol^\bot$. Applying ~\cite[proposition 3.21 and 6.15, Definition 6.13]{Berline_Springer_2004} \footnote {Note that there is a factor of $\dVol^\bot$ missing on the right hand side of \citep[Proposition 6.15]{Berline_Springer_2004}. Also note that we chose the orientation of $M_\gamma$ such that the factor $\varepsilon(\gamma)$ in \citep{Berline_Springer_2004} is 1.}, as well as the fact that integration only sees the top degree part of a form, allows us to calculate (as in \citep{Berline_Springer_2004})
\begin{align*}
    \mathfrak{A}&=\int\limits_M(4\pi)^{-\ell} \mathrm{Str} (\Phi_\ell(\gamma))\dVol\\
    &=\int\limits_M (4\pi)^{-l}(-2 \ri)^\frac{d}{2}\sigma_d(\Phi_\ell(\gamma))\\
    &=\int\limits_M (4\pi)^{-l}(-2 \ri)^\frac{d}{2}2^{-\ell^\bot}\ka\wedge\dVol^\bot\delta_{M_\gamma}\\
    &=\int\limits_{M_\gamma}(2\pi \ri)^{-\ell}i^{-\ell^\bot} \ka. 
\end{align*}
\end{proof}
%
%
%
%
%
%
%
%
%
%
\subsection{Transgression forms}

The index theorem above
requires all structures to be of product form near the boundary. Thus we introduce comparison structures that satisfy this requirement. Following~\citet{BSglobal}, 
we consider a deformation of the metric~\eqref{eq: spacetime_Riem_metric} near the boundary $\partial M$ to a metric $\tilde{g}$ with product structure near the boundary, i.e.,
\begin{equation*}
    \tilde{g} = \rd t^{2} + g_{j} 
\end{equation*}
near $\Sigma_{j}$ where $j = 0, 1$. Let ${\langle \cdot, \cdot \rangle}'_E$ and $\tilde\nabla^E$ be a hermitean product and a compatible connection on $E$, that are of product structure near $\partial M$ and coincide with the original structures on $\partial M$. These can be constructed as follows: take any metric and connection in the interior, take the product metric and product connection near the boundary and glue these with a partition of unity. This will yield a hermitian metric ${\langle \cdot, \cdot \rangle}_E'$ and a connection $\nabla'$ that is product and compatible near the boundary, but not compatible everywhere. We fix this using a trick from \citet[(2.13)]{HuLe}. We let $\nabla'^{*}$ denote the adjoint connection of $\nabla'$ defined via
\[{\langle \nabla'^{*}_Xf, g \rangle}'_E=\partial_X{\langle f, g \rangle}'_E-{\langle f, \nabla'_X g \rangle}'_E.\]
Then the connection
\begin{equation}
\label{adjointconn}
\tilde\nabla^E:=\frac{1}{2}(\nabla'+\nabla'^{*})
\end{equation}
is metric compatible and coincides with $\nabla^0$ near the boundary, i.e. has product structure there.
We denote the twisted Dirac operator corresponding to these new structures by $\tilde{D}$. Near the $\Sigma_{j}$ it is of the form 
\begin{equation*} \label{eq: Dirac_op_bdy}
    \tilde{D} = - \fc (\nu) (\partial_{t} + A_{j}). 
\end{equation*}

Let $\tilde{\fh}$ be the Riemannian metric on $M_{\gamma}$ induced from $\tilde{g}$ and $\tilde{\nabla}$ the Levi-Civita connection on $(M_{\gamma}, \tilde{\fh})$ induced from the Levi-Civita connection on $(M, \tilde{g})$. 
We denote the corresponding $\hat{A}$-form by $\hat{A} (\tilde{\nabla})$. 
Analogously, $\ch_{\gamma} (\tilde{\nabla}^{E})$ denotes the localised Chern character form of the product connection $\tilde{\nabla}^{E}$ on the twisting bundle $E$.

The integrand $\ka$ is comprised of three different forms: the $\hat A$-form, the localized Chern character and the term coming from the normal bundle of the fixed point set.
To compare these quantities between the original set-up and the new one with product structure, we shall use what is known as transgression forms. By a transgression form for some curvature dependent form $\alpha$, we will mean a form $\fT \alpha$ associated to two connections such that 
\[\alpha(\nabla_1)-\alpha(\nabla_2)=\rd \, \fT \alpha(\nabla_2,\nabla_1),\]
where $\rd$ denotes the exterior derivative. For any such transgression form, we have by Stokes theorem
\begin{equation} \label{eq: int_transgression_form_ASS_integrand}
    \int\limits_{\partial M}\fT\alpha(\nabla_1,\nabla_2)=\int\limits_M \rd \, \fT\alpha(\nabla_1,\nabla_2)=\int\limits_M\alpha(\nabla_2)-\int\limits_M\alpha(\nabla_1). 
\end{equation}
In particular, the boundary integral does not depend on the choice of transgression form. 

Our goal is to compute the transgression forms for the forms in the integrand of the equivariant index theorem. This will work analogously to the computation for standard characteristic forms 
(see~\citet[Proposition 1.41 and Theorem 7.7]{Berline_Springer_2004}). 
Let $\sE$ be  a vector bundle with a connection $\nabla^{\sE}$ and $C^{\infty}(M; \wedge^{\even} \coTan M \otimes \End \sE)$ the space of even degree $\End \sE$-valued differential forms on $M$. 
In what follows, we need a slight generalisation of an identity used in the proof of the transgression formula 
in~\citet[Proposition 1.41]{Berline_Springer_2004}. 
\begin{lemma} \label{lem: str_alpha_beta_gamma}
Let $\alpha_\cdot \colon [0,1]\to \wedge^{\even} \coTan M \otimes \End \sE.$ If $\beta,\gamma\in \wedge^{\even} \coTan M \otimes \End \sE$ commute with $\alpha_s$, then for any polynomial $f$, we have
    \[\Str_\sE(\beta\partial_s f(\alpha_s)\gamma)=\Str_\sE(\beta(\partial_s\alpha_s)f'(\alpha_s)\gamma).\] 
\end{lemma}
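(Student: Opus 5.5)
By linearity of $\Str_\sE$ and of $\partial_s$, it suffices to prove the identity for a monomial $f(x)=x^n$; the general polynomial then follows by summing. For $n=0$ both sides vanish, since $\partial_s 1 = 0$ and $f'=0$. For $n\geq 1$, the Leibniz rule in the (noncommutative) algebra $\wedge^{\even}\coTan M\otimes\End\sE$ gives
\[
\partial_s(\alpha_s^n)=\sum_{k=0}^{n-1}\alpha_s^{k}(\partial_s\alpha_s)\alpha_s^{n-1-k}.
\]
This is legitimate because all entries are even forms. The form parts therefore commute among themselves, and the only noncommutativity comes from the $\End\sE$ factors.

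Next I multiply by $\beta$ on the left and $\gamma$ on the right, and use the hypothesis that $\beta$ and $\gamma$ commute with $\alpha_s$. This lets me rewrite each summand as
\[
\beta\,\alpha_s^{k}(\partial_s\alpha_s)\alpha_s^{n-1-k}\gamma=\alpha_s^{k}\,\beta(\partial_s\alpha_s)\alpha_s^{n-1-k}\gamma .
\]
The key tool is now the trace property of $\Str_\sE$ on even-degree $\End\sE$-valued forms: $\Str_\sE(XY)=\Str_\sE(YX)$ when $X$ (or $Y$) has even form degree, which is \cite[Proposition 1.31]{Berline_Springer_2004}. Since $\alpha_s^k$ is even, I apply this with $X=\alpha_s^k$ and move it to the right end, obtaining
\[
\Str_\sE\big(\beta(\partial_s\alpha_s)\alpha_s^{n-1-k}\gamma\,\alpha_s^{k}\big).
\]
Commuting $\alpha_s^k$ back past $\gamma$, again by hypothesis, turns this into $\Str_\sE\big(\beta(\partial_s\alpha_s)\alpha_s^{n-1}\gamma\big)$. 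Summing over $k$ gives $n$ identical terms, that is, $\Str_\sE\big(\beta(\partial_s\alpha_s)f'(\alpha_s)\gamma\big)$ with $f'(x)=nx^{n-1}$.

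The only delicate point is the use of the supertrace cyclicity. I must check that $\Str_\sE$ vanishes on graded commutators, and that with all forms even these are ordinary commutators, so that no signs appear. I also need $\partial_s$ to commute with $\Str_\sE$, but this is not used here because the identity is stated inside the trace. Note also that $\partial_s\alpha_s$ itself need not commute with $\beta$, $\gamma$ or $\alpha_s$, and the argument above never needs that. I expect no further obstacle. If $f$ is instead a power series, such as $\exp$, the same argument applies termwise, since everything converges in the finite-dimensional nilpotent form algebra.
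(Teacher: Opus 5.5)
Your proof is correct and is essentially the paper's argument: reduce to monomials, expand $\partial_s(\alpha_s^n)$ by the product rule, and use that $\beta,\gamma$ commute with $\alpha_s$ together with the cyclicity of $\Str_\sE$ when one factor has even degree. This collapses each of the $n$ summands to $\Str_\sE\big(\beta(\partial_s\alpha_s)\alpha_s^{n-1}\gamma\big)$. One small correction: the product-rule expansion holds in any associative algebra, so evenness is needed only for the trace cyclicity, not to justify the Leibniz formula as you claim.
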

\begin{proof}
    As even forms commute and the trace is cyclic on $\End \sE$, we have for $a,b\in \wedge^{\even} \coTan M \otimes \End \sE$:
    \[\Str_\sE(ab)=\Str_\sE(ba).\]
    By linearity, it suffices to show the Lemma for monomials $f(z)=z^k$. For these, we have
    \begin{align*}
        \Str_\sE(\beta\partial_s f(\alpha_s)\gamma)&=\Str_\sE(\beta\sum\limits_{j=1}^k\alpha_s^{k-j}\partial_s \alpha_s\alpha_s^{j-1}\gamma)\\
        &=\sum\limits_{j=1}^k\Str_\sE(\beta(\partial_s \alpha_s)\alpha_s^{k-1}\gamma)\\
        &=\Str_\sE(\beta(\partial_s \alpha_s)f'(\alpha_s)\gamma). 
    \end{align*}
\end{proof}
%
%
%

To determine $\fT \! \hat{A} (\tilde{\nabla}, \nabla)$, we set $\nabla_{s} := (1 - s) \nabla + s \tilde{\nabla}$ for $s \in [0, 1]$. 
Let $R_{s}$ be the respective curvature with respect to $\nabla_{s}$. 
Note that~\eqref{eq: def_A_hat_genus_fixed_pt_set} can be expressed as 
\begin{equation*}
    \hat{A} (\nabla_{s}) = \exp \Big( \Str \big( f_{\hat{A}} (R_{s}) \big) \Big), 
    \quad 
    f_{\hat{A}} (R_{s}) := \frac{1}{2} \ln \frac{R_{s}/2}{\sinh (R_{s}/2)}. 
\end{equation*}
Now, performing the $\rd / \rd s$ derivative in $\hat{A} (\tilde{\nabla}) - \hat{A} (\nabla) = \int_{0}^{1} \rd \hat{A} (\nabla_{s})/ \rd s$ using Lemma~\ref{lem: str_alpha_beta_gamma} results in the transgression form of the $\hat{A}$-form 
\begin{equation} \label{eq: transgression_form_A_hat}
    \fT \! \hat{A} (\tilde{\nabla}, \nabla) := \frac{\rd}{2} \int_{0}^{1} \hat{A} (\nabla_{s}) \Str_{\tangent M_\gamma} \left( \frac{\rd \nabla_{s}}{\rd s} \frac{\rd}{\rd s} \ln \frac{R_{s} / 2}{ \sinh (R_{s} / 2)} \right) \rd s.  
\end{equation}

We now proceed as before by setting $\nabla_{s}^{E} := (1 - s) \nabla^{E} + s \tilde{\nabla}^{E}$ for $s \in [0, 1]$ to determine $\fT \ch_{\gamma} (\tilde{\nabla}^{E}, \nabla^{E})$. 
Let $\Omega_{s}$ be the respective curvature with respect to $\nabla_{s}^{E}$. 
Then employing Lemma~\ref{lem: str_alpha_beta_gamma}, we obtain  
\begin{align*}
    \ch_{\gamma} (\tilde{\nabla}^{E}) - \ch_{\gamma} (\nabla^{E}) 
    & = 
    \int_{0}^{1} \frac{\rd \ch_{\gamma} (\tilde{\nabla}_{s})}{\rd s} \rd s
    \nonumber \\ 
    & = 
    \int_{0}^{1} \Str_{E} \left( \gamma^{E} \frac{\rd \exp (- \Omega_{s})}{\rd s} \right)
    \nonumber \\  
    & = 
    - \int_{0}^{1} \Str_{E} \left( \gamma^{E} \frac{\rd \Omega_{s}}{\rd s} \exp (- \Omega_{s})  \right)
    \nonumber \\  
    & = 
    - \int_{0}^{1} \Str_{E} \left( \gamma^{E} \left[ \nabla_{s}^{E}, \frac{\rd \nabla_{s}^{E}}{\rd s} \exp (- \Omega_{s}) \big) \right] \right)
    \nonumber \\
    & = 
    - \int_{0}^{1} \Str_{E} \left(  \left[ \nabla_{s}^{E}, \gamma^{E}\frac{\rd \nabla_{s}^{E}}{\rd s} \exp (- \Omega_{s}) \big) \right] \right)
    \nonumber \\ 
    & = 
    -
    \int_{0}^{1} \rd \Str_{E} \left( \gamma^{E} \frac{\rd \nabla_{s}^{E}}{\rd s} \exp (- \Omega_{s})  \right), 
\end{align*}
where we have used the fact that $\Omega_{s} = (\nabla_{s}^{E})^2$ 
and ~\cite[Lemma 1.42]{Berline_Springer_2004}).
Therefore, the transgression form of the Chern character form is given by 
\begin{equation} \label{eq: transgression_form_Chern}
    \fT \! \ch_{\gamma} (\tilde{\nabla}^{E}, \nabla^{E}) := - \int_{0}^{1} \Str_{E} \left( \gamma^{E} \omega \exp (- \Omega_{s}) \right) \rd s.  
\end{equation}

Next, we compute the transgression form of 
\[{\det}^{-1/2} \big( \one - \gamma^{\perp} \exp (- R^{\perp}) \big) = \exp (- \frac{1}{2} \Str_{M_\gamma^\bot} \ln f^{\perp}),\]
where $f^{\perp} := \one - \gamma^{\perp} \exp (- R^{\perp})$.
As before, set $\nabla_{s}^{\perp} := (1 - s) \nabla^{\perp} + s \tilde{\nabla}^{\perp}$ for $s \in [0, 1]$. 
Employing Lemma~\ref{lem: str_alpha_beta_gamma} twice, we get 
\begin{align*}
    \exp (- \frac{1}{2} \Str_{M_\gamma^\bot} \ln \tilde{f}^{\perp}) 
    & - \exp (- \frac{1}{2} \Str_{M_\gamma^\bot} \ln f^{\perp}) 
    \nonumber \\ 
    & = 
    - \frac{1}{2} \int_{0}^{1} \frac{\rd \Str_{M_\gamma^\bot} \ln f_{s}^{\perp}}{\rd s} \exp (- \frac{1}{2} \Str_{M_\gamma^\bot} \ln f_{s}^{\perp}) \rd s 
    \nonumber \\ 
    & = 
    - \frac{1}{2} \int_{0}^{1} \Str_{M_\gamma^\bot} (-\gamma^\bot\frac{\rd \exp(-R^\perp_s)}{\rd s} (f_{s}^{\perp})^{-1]}) \exp (- \frac{1}{2} \Str_{M_\gamma^\bot} \ln f_{s}^{\perp}) \rd s 
    \nonumber \\ 
    & = 
    \frac{1}{2} \int_{0}^{1}  \Str_{M_\gamma^\bot} \left( \frac{\rd R_{s}^{\perp}}{\rd s} \frac{\gamma^{\perp} \exp (- R^{\perp})}{f_{s}^{\perp}} \right) \exp (- \frac{1}{2} \Str_{M_\gamma^\bot} \ln f_{s}^{\perp}) \rd s 
    \nonumber \\ 
    & = 
    \frac{1}{2} \int_{0}^{1} \Str_{M_\gamma^\bot} \left( \left[ \nabla_{s}^{\perp}, \frac{\rd \nabla_{s}^{\perp}}{\rd s} \frac{\one - f_{s}^{\perp}}{f_{s}^{\perp}} \exp (- \frac{1}{2} \Str_{M_\gamma^\bot} \ln f_{s}^{\perp}) \right] \right) 
     \rd s 
    \nonumber \\ 
    & = 
    \frac{1}{2} \rd \int_{0}^{1} \Str_{M_\gamma^\bot} \left( \frac{\rd \nabla_{s}^{\perp}}{\rd s} \frac{\one - f_{s}^{\perp}}{f_{s}^{\perp}} \exp (- \frac{1}{2} \Str_{M_\gamma^\bot} \ln f_{s}^{\perp}) \right) 
     \rd s. 
\end{align*}
This gives 
\begin{equation} \label{eq: transgression_form_Riem_curv_perp}
    \fT \exp (- \frac{1}{2} \Str_{M_\gamma^\bot} \ln f^{\perp}) 
    = 
    \frac{1}{2} \int_{0}^{1} \Str_{M_\gamma^\bot} \left( \frac{\rd \nabla_{s}^{\perp}}{\rd s} \frac{\one - f_{s}^{\perp}}{f_{s}^{\perp}} \exp (- \frac{1}{2} \Str_{M_\gamma^\bot} \ln f_{s}^{\perp}) \right) 
     \rd s.
\end{equation}

A transgression form of a product of forms is given by 
(see~\cite[(6.12)]{Braverman_IndianaUMJ_2019})
\[\fT(\alpha\wedge\beta)(\nabla_1,\nabla_1',\nabla_2,\nabla_2'):=\alpha(\nabla_2)\wedge\fT\beta(\nabla_1',\nabla_2')+\fT\alpha(\nabla_1,\nabla_2)\wedge\beta(\nabla_1').\]
We combine the above results to obtain the following.

%
%
%
\begin{lemma} \label{lem: transgression_form_ASS}
    The transgression form $\fT \ka$ of the integrand $\ka$ in Theorem~\ref{thm: global_g_ind} is given by 
    \begin{align}
        \fT \ka 
        & = 
        \fT \big( \exp (- \frac{1}{2} \Str_{M_\gamma^\bot} \ln f^{\perp}) \big) \wedge \hat{A} (\tilde{\nabla}) \wedge \ch_{\gamma} (\tilde{\nabla}^{E})  
        \nonumber \\ 
        & + 
        \exp (- \frac{1}{2} \Str_{M_\gamma^\bot} \ln f^{\perp}) \wedge \fT \! \hat{A} (\tilde\nabla,\nabla) \wedge \ch_{\gamma} (\tilde{\nabla}^{E}) 
        \nonumber \\ 
        & + 
        \exp (- \frac{1}{2} \Str_{M_\gamma^\bot} \ln f^{\perp}) \wedge \hat{A} (\tilde{\nabla}) \wedge \fT \! \ch_{\gamma} (\tilde{\nabla}^{E}, \nabla^{E}), 
    \end{align}
    where $\fT \! \hat{A} (\fh), \fT \! \ch_{\gamma} (\tilde{\nabla}^{E}, \nabla^{E})$, and $\fT \big( \exp (- \frac{1}{2} \Str_{M_\gamma^\bot} \ln f_{\perp}) \big)$ are given by~\eqref{eq: transgression_form_A_hat},~\eqref{eq: transgression_form_Chern}, and~\eqref{eq: transgression_form_Riem_curv_perp}, respectively. 
\end{lemma}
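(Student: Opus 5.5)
The plan is to apply the product rule for transgression forms, quoted just above from \cite[(6.12)]{Braverman_IndianaUMJ_2019}, twice to the factorisation
\[\ka = \exp\big(-\tfrac{1}{2}\Str_{M_\gamma^\bot}\ln f^{\perp}\big)\wedge \hat A(\nabla)\wedge \ch_\gamma(\nabla^E),\]
and then insert the three individual transgression forms \eqref{eq: transgression_form_A_hat}, \eqref{eq: transgression_form_Chern} and \eqref{eq: transgression_form_Riem_curv_perp}. All three factors are even-degree forms on $M_\gamma$, so they commute and the order of the wedge product is irrelevant.

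\emph{Step 1: the two-factor rule.} I first verify the product formula directly, since everything rests on it. Write $\alpha_i=\alpha(\nabla_i)$ and $\beta_i=\beta(\nabla_i')$. Then
\[\alpha_1\beta_1-\alpha_2\beta_2=(\alpha_1-\alpha_2)\beta_1+\alpha_2(\beta_1-\beta_2).\]
Each difference equals $\rd$ of the corresponding transgression form. The outer factors $\beta_1$ and $\alpha_2$ are closed, being characteristic forms, and they are even. Hence the Leibniz rule gives
\[\rd\big(\fT\alpha\wedge\beta_1+\alpha_2\wedge\fT\beta\big)=\alpha_1\beta_1-\alpha_2\beta_2.\]
This holds up to the sign conventions for which connection sits in which slot.

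\emph{Step 2: closedness of the equivariant factors.} I need $\ch_\gamma(\nabla^E)$ and the normal factor to be closed on $M_\gamma$. For the Chern factor, $\gamma^E$ is parallel along $M_\gamma$ because the action preserves the connection, so it commutes with $\nabla^E$. Then $\rd\Str_E(\gamma^E e^{-\Omega})=\Str_E([\nabla^E,\gamma^E e^{-\Omega}])=0$ by the Bianchi identity. The normal factor is closed by the same argument, since $\gamma^\perp$ is parallel for the induced connection on $M_\gamma^\perp$. This parallelism is also exactly what justified using Lemma~\ref{lem: str_alpha_beta_gamma} in deriving \eqref{eq: transgression_form_Chern} and \eqref{eq: transgression_form_Riem_curv_perp}.

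\emph{Step 3: iterate.} I take $\alpha=\exp(-\frac12\Str\ln f^\perp)$ and $\beta=\hat A\wedge\ch_\gamma$, and apply the rule once. I then expand $\fT\beta$ by applying the rule again to $\hat A\wedge\ch_\gamma$. The result is three terms: in each, exactly one factor is replaced by its transgression form, and the remaining factors are evaluated at the appropriate connections. Matching these slots to the displayed formula is bookkeeping. Any discrepancy between $\nabla$ and $\tilde\nabla$ in the untransgressed factors only changes the result by an exact form, because the untransgressed factors are closed. By \eqref{eq: int_transgression_form_ASS_integrand} such a change does not affect the boundary integral $\int_{\partial M_\gamma}\fT\ka$, which is the only quantity used later.

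\emph{Main obstacle.} The delicate point is the consistency of orientation and sign conventions for $\fT(\nabla_1,\nabla_2)$ across the three formulas. The paper's definition, the $\hat A$ computation, and the Chern computation do not obviously use the same ordering, and an overall sign error would flip the boundary term. I would first pin down the convention as $\alpha(\tilde\nabla)-\alpha(\nabla)=\rd\,\fT\alpha(\tilde\nabla,\nabla)$, which is what the computations above actually prove. I would then check each factor against it, and fix the signs in the final formula so that Stokes' theorem reproduces $\int_{M_\gamma}\ka(\tilde g)-\int_{M_\gamma}\ka(\hat g)$ with the correct sign.
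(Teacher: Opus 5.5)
Your overall route is the paper's. The paper gives no argument beyond applying the quoted product rule to the three factors and inserting \eqref{eq: transgression_form_A_hat}, \eqref{eq: transgression_form_Chern} and \eqref{eq: transgression_form_Riem_curv_perp}. Your Steps 1 and 2 are fine, and so is your convention $\rd\,\fT x=\tilde x-x$, which is indeed what the paper's computations prove. One caveat: the product-type structures $\tilde g,\tilde\nabla^E$ must be $\gamma$-invariant, otherwise the tilded equivariant factors are not closed.

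The gap is the last part of Step 3. Write $a,b,c$ for the normal factor, $\hat A(\nabla)$ and $\ch_\gamma(\nabla^E)$, and $\tilde a,\tilde b,\tilde c$ for their product-structure counterparts. Iterating your rule gives
\[\fT a\wedge\tilde b\wedge\tilde c+a\wedge\fT b\wedge\tilde c+a\wedge b\wedge\fT c,\]
whereas the displayed third term carries $\tilde b=\hat A(\tilde\nabla)$. This is not bookkeeping. No one-factor-at-a-time telescoping, in either direction, produces the displayed pattern: term two needs $c$ moved before $b$, term three needs $b$ before $c$. In fact the displayed form $L$ satisfies $\rd L=\tilde{\ka}-\ka+a(\tilde b-b)(\tilde c-c)$, so it is not literally a transgression form of $\ka$.

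Your claim that the discrepancy $a\wedge(\tilde b-b)\wedge\fT c$ is exact because the untransgressed factors are closed is false. The factor $\fT c$ is not closed, and $\rd\big(a\wedge(\tilde b-b)\wedge\fT c\big)=a(\tilde b-b)(\tilde c-c)\neq 0$ in general. So the discrepancy is not even closed. Closedness of the factor you swap does not help; you would need closedness of the transgression form it multiplies.

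What rescues the only quantity used later, $\int_{\partial M_\gamma}\fT\ka$, is a boundary fact you did not invoke. The connections $\tilde\nabla^E$ and $\nabla^E$ coincide along $\partial M$, so $\omega=\tilde\nabla^E-\nabla^E$ pulls back to zero on $\partial M_\gamma$. By \eqref{eq: transgression_form_Chern}, so does $\fT\ch_\gamma(\tilde\nabla^E,\nabla^E)$. Hence the third term restricts to zero on $\partial M_\gamma$ whichever of $b,\tilde b$ it carries. Equivalently, $\int_{M_\gamma}a(\tilde b-b)(\tilde c-c)=\int_{\partial M_\gamma}a\wedge\fT b\wedge(\tilde c-c)=0$, because $\tilde c$ and $c$ agree on $\partial M_\gamma$. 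Therefore $\int_{\partial M_\gamma}L=\int_{M_\gamma}(\tilde{\ka}-\ka)$ after all.

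You should either state the lemma with $\hat A(\nabla)$ in the third term, or replace your exactness argument by this restriction argument. This vanishing is special to the $\ch_\gamma$ factor. The Levi-Civita connections of $\hat g$ (or $g$) and $\tilde g$ differ along $\partial M$ by the second fundamental form, so $\fT\hat A$ does not vanish there; that is the genuine boundary contribution.
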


As the transgression form is locally defined, its values at $\partial M_\gamma$ will only depend on the connections near $\partial M_\gamma$. Since various connections $\tilde \nabla^ \bullet $ are the product connection near the boundary, the resulting transgression form on the boundary is independent of the choices of $\tilde g$ and $\tilde\nabla^E$. In fact, we could also take $\tilde g$ to be a Lorentzian metric with product structure near the boundary and still obtain the same connections (note that the normal bundle of $M_\gamma^\bot$ at $(t,x')$ is the normal bundle of $\Sigma_\gamma$ at $x'$ and will thus only depend on the boundary metric if there is product structure). We will thus write only $\fT\ka$ for $\ft\ka (\tilde \nabla, \nabla,\tilde \nabla^\bot, \nabla^\bot,\tilde\nabla^E,\nabla^E),$ where $\nabla$ and $\nabla^\bot$ are induced by the Levi-Civita connection of the Lorentzian metric $g$ and the quantities with tilde are any that have product structure near $\partial M$ (and agree with the original ones on $\partial M$). 

%
%
%
%
%
%
%
%
%
%
\subsection{The main theorem}

We are now prepared to prove our main theorem.

%
%
%
\ThmGlobalGInd*
%
%
%
\begin{proof}
Let $g_s:=(1-s)\hat g +s\tilde g$, $\<\cdot,\cdot\>_s:=(1-s)\<\cdot,\cdot\>_E+s\<\cdot,\cdot\>'_E$, and $\nabla_s:=(1-s)\nabla+s\tilde\nabla$. To obtain compatible connections, we set
$\tilde\nabla_s:=\frac{1}{2}(\nabla_s+\nabla_s^*),$
where $\nabla_s^*$ denotes the adjoint connection (see (\ref{adjointconn})) with respect to $\<\cdot,\cdot\>_s$. These are all $\gamma$-equivariant. Let $D_s$ denote the twisted Dirac operator for the metric $g_s$ and the connection $\tilde\nabla_s$. Then $D_s$ is a continuous homotopy of equivariant twisted Dirac operators from $\hat D$ to $\tilde D$. As the index is homotopy invariant, we thus have
\[\ind_\gamma(\hat D_{\APS})=\ind_\gamma(\tilde D_{\APS}). \]

Next, we express $\ind_{\gamma} (\tilde{D})$ in terms of the curvature quantities with respect to the connections $\nabla, \nabla^{\perp}, \nabla^{E}$ that are compatible with the Lorentzian metric~\eqref{eq: spacetime_metric} and the hermitian bundle metric on $E$. 
This requires to add the transgression form $\rd \fT \ka$ of $\ka$. 
By~\eqref{eq: int_transgression_form_ASS_integrand}, we have  
\begin{equation*}
    \int_{M_{\gamma}} \tilde{\ka}  
    = 
    \int_{M_{\gamma}} \ka + \int_{\partial M_{\gamma}} \fT \ka.   
\end{equation*}
Inserting this expression in Theorem~\ref{thm: BGVD_ind_thm} yields 
\begin{equation*}
    \gInd (\tilde{D}_{\APS}) 
    = 
    \int_{M_{\gamma}} (2 \pi \ri)^{- \ell} \ri^{-\ell_{\perp}} \ka 
    +  
    \int_{\partial M_{\gamma}} (2 \pi \ri)^{- \ell} \ri^{-\ell_{\perp}} \fT \ka+ 
    \kb .   
\end{equation*}

As shown in Theorem~\ref{indsfdirac} that $\gInd (D_{\APS}) = \sfl_{\gamma}(A)-\tr(\gamma|_{\ker A(1)}) = \gInd (\tilde{D}_{\APS})$. 
Putting everything together, we obtain
\[\gInd(D_{\APS})=\gInd(\hat D_{\APS})=\gInd(\tilde D_{\APS})=\int_{M_{\gamma}} (2 \pi \ri)^{- \ell} \ri^{-\ell_{\perp}} \ka 
    +  
    \int_{\partial M_{\gamma}} (2 \pi \ri)^{- \ell} \ri^{-\ell_{\perp}} \fT \ka+ 
    \kb.   \]
\end{proof}
%
%
%

We conclude this section with some remarks on generalisability. If the twisting bundle $E$ is graded as $E^+\oplus E^-$, the total Dirac index is that for twisting with $E^+$ minus that for twisting with $E^-$, so the index theorem for graded $E$ can be obtained from the one above.

There are various different formulations of the equivariant index theorem in the Riemannian case. As long as the right-hand side does not depend explicitly on the Dirac operator or the metric away from the boundary, it has to coincide with the right-hand side of our theorem by virtue of both sides being equal to the Riemannian index. Thus any such variation also extends to the Lorentzian case. 
For 
instance,~\citet[Theorem 6.4]{Braverman_IndianaUMJ_2019} 
holds in the same way in the Lorentzian setting.

There are variations of the index theorem for non-compact settings: \citet{Braverman_MathZ_2020} describes an index theorem on a spatially non-compact manifold in the presence of a suitable potential, while \citet{Shen_PureApplAanal_2022} describes the index for a timelike non-compact, asymptotically cylindrical spacetime. Corresponding equivariant theorems could probably be obtained by similar methods. We also expect that the main theorem can be extended to the spin$^{\mathrm{c}}$ twisted Dirac operators without great difficulty.

Another question is that of generalising to arbitrary Dirac-type operators.
Our theorem only works for twisted Dirac operators. This is inherent in our method of proof, as we need a Riemannian operator to compare to. A theorem for general Dirac-type operators would probably require an adaptation of 
the~\citet{BSlocal} local index theorem. 

%
%
%
%
%
%
%
%
%
%
\subsection{Examples}
To illustrate how the equivariant index behaves, we now proceed to compute it index in some simple examples.

%
%
%
\begin{ex}[Twisted bundle over $\bbS^1$]
    Let $M=[0,1] \times \bbS^1$ with the Lorentzian product metric $-dt^2 + dx^2$, where $t$ denotes the first coordinate and $x$ denotes the second coordinate (we define coordinates around each point on $\bbS^1$ via $x^{-1}(r)=e^{ir}$).  Let $E=M\times\R^k$ with the standard scalar product and connection
\[\nabla^E=d-itJdx,\]
where $J$ is any self-adjoint matrix. We define a group action of $\bbS^1\subset \C$ on $E$ by
\[z.((t, y),v)=((t, zy),\alpha(z)v),\]
for some unitary representation $\alpha$ of $\bbS^1$ on $\R^n$, i.e. $\bbS^1$ acts on $M$ by spatial rotations and on each fibre by $\alpha$. This preserves all relevant structures. As $\Sigma_t=\{t\} \times \bbS^1$, the corresponding spinor bundle is just $[0,1] \times \Sigma_t\times \R$, with connection given by differentiation and Clifford multiplication given by multiplication with $\ri$. The connection on the twisted spinor bundle of $\Sigma_{t}$ is thus again given by
\[\nabla=d-itJdx.\]
We obtain
\[A_t=\mathsf{c}(\partial_x)\nabla_{\partial_x}=i\partial_x+tJ.\]
The operator $i\partial_x$ on functions $\bbS^1\mapsto \R$ has spectrum $\Z$ with simple eigenvalues. (The eigenfunctions are given by $z\mapsto z^j$.)
We start with the case $k=1$, $J=1$ and $\alpha(z)=z$ (scalar multiplication). The spectral flow of the family $A_t=i\partial_x+t$ is 1.
As the group acts by scalar multiplication, i.e. everything is a single eigenspace, we get for any $z\in \bbS^1$:
\begin{align*}
\ind_z(D_{APS})=\sfl_z(A)=z\sfl(A)=z.
\end{align*}
In particular, we have a non-zero index for any $z\in \bbS^1$. However, the source of this in the index formula varies depending on $z$. If $z=1$, the fact that $A_0$ and $A_1$ have the same spectrum implies that they have the same $\eta$-invariants, so the boundary terms cancel out and we only get a contribution from the interior of $M$. Conversely, if $z\neq 1$, the fixed point set of the action on $M$ is empty, so we get no interior contribution. The index thus comes only from the boundary terms. This show that, while the index as a whole is continuous as a function of the group element, the individual summands in the index theorem need not be continuous. 

Next we look at the case $k=2$, 
$J=\begin{pmatrix}
    1&0\\
    0&-1
\end{pmatrix}$ and $
\alpha(z)=\begin{pmatrix}
    1&0\\
    0&z
\end{pmatrix}$
to show that the equivariant index may indeed be non-zero while the classical index vanishes. We then have
\[A_t=i\partial_x+\begin{pmatrix}
    t&0\\
    0&-t
\end{pmatrix}=(i\partial_x+t)\oplus(i\partial_x-t).\]
As the spectrum of $i\partial_x$ is symmetric, we have
\[\sfl(i\partial_x-t)=\sfl(-{\ri}\partial_x-t)=-\sfl(\partial_x+t)=-1\]
and we obtain
\[\ind_z(D_{APS})=\sfl_z(A)=\sfl(A|_{E_1(\alpha(z))})+z\sfl(A|_{E_z(\alpha(z))})=\sfl(i\partial_x+t)+z\sfl(i\partial_x-t)=1-z.\]
This vanishes for $z=1$ (i.e. in the non-equivariant case), but is non-zero otherwise.
\end{ex}

%
%
%
\begin{ex}[Berger metrics on $\bbS^3$]
    Next we will look at an example of an equivariant index coming from the metric rather than a twist bundle. For this we use the computation of Dirac eigenvalues of Berger spheres from \cite{Hit} to show that we get a non-trivial spectral flow (essentially this is just \cite[section 5]{BSglobal}, only keeping track of the group action).
Consider $\bbS^3$, viewed as the unit sphere of $\C^2\cong \R^4$. The tangent space at each point $x$ can be canonically identified with all vectors that are real orthogonal to $x$. These decompose as
\[T_x\bbS^3\cong i\R x\oplus x^\bot,\]
where now the orthogonal complement is with respect to the complex scalar product. Using this decomposition, the Berger metrics $g_\lambda$ are defined as
\[(g_\lambda)_x(aix+b,a'ix+b'):=\lambda^2aa'+\<b,b'\>_\R\]
for any $a,a'\in \R$ and $b,b'\in x^\bot$. For $\lambda=1$ we obtain the standard metric. As the metric and complex multiplication are preserved by complex unitaries, the action of $U(2)$ on the sphere leaves the Berger metrics invariant. Identifying $\bbS^3$ with $SU(2)$, the space $L^2(\bbS^3)$ can be described via representation theory. $SU(2)$ has exactly one irreducible representation in each dimension, which we will denote by $R_n$ for dimension $n$. By the Peter-Weyl theorem, we have
\[L^2(\bbS^3)\cong \bigoplus\limits_{n=1}^{\infty}R_n\otimes R_n^*.\]
As the tangent bundle of $\bbS^3$ is trivial (identify tangent spaces via the $SU(2)$-action), the spinor bundle of $\bbS^3$ is also trivial, so we obtain
\[L^2(\bbS^3,S(\bbS^3))\cong L^2(\bbS^3)\otimes \R^2\cong \bigoplus\limits_{n=1}^{\infty}R_n\otimes R_n^*\otimes \R^2\]
(with the $SU(2)$ action on $L^2$ corresponding to the $SU(2)$-action on the first tensor factor).

As the Dirac operator is $SU(2)$-equivariant, its action on the first tensor factor is trivial, i.e. $D|_{R_n\otimes R_n^*\otimes \R^2}=1_{R_n}\otimes D_n$ for some $D_n$. The action of $D_n$ on $R_n^*\otimes \R^2$ is computed in \cite[Proposition 3.2.]{Hit}, where it is shown that this has an eigenvalue
\[\frac{\lambda}{2}+\frac{n}{\lambda}\]
of multiplicity 2 (which can never be zero) and single eigenvalues
\[\frac{\lambda}{2}\pm\sqrt{4p(n-p)+\left(\frac{2p-n}{\lambda}\right)^2}\]
for integer $0<p<n$ (those only exist for $n\geq 2$).

In order to obtain a zero eigenvalue, we must have a "$-$" in front of the square root. In that case we have for $n\geq 3$
\[\frac{\lambda}{2}-\sqrt{4p(n-p)+\left(\frac{2p-n}{\lambda}\right)^2}\leq\frac{\lambda}{2}-\sqrt{4p(n-p)}\leq \frac{\lambda}{2}-2\sqrt{2},\]
which is non-zero for $\lambda<4\sqrt{2}$. Thus the only eigenvalue that can vanish for $\lambda\in [0,5]$ occurs for $n=2$ (and thus $p=1$) and is given by
\[\frac{\lambda}{2}-2.\]
Note that the corresponding eigenspace of the Dirac operator is given by $R_2$ (from the first tensor factor). This is just $\C^2$ with the canonical $SU(2)$-action, as the latter is irreducible.
Choose $0<\epsilon<1$ such that no other eigenvalues lie in $[-\epsilon,\epsilon]$ for any $\lambda\in [0,5]$. For $t\in[0,1]$, denote by $A(t)$ the Dirac operator for the Berger metric $g_{5t}$ on $\bbS^3$. Then we have for $\gamma\in SU(2)$ 
\begin{align*}
    \sfl_{\gamma}(A)&=\tr(\gamma|_{E_{[0,\epsilon]}(A(\frac{4+\epsilon}{5}))})-\tr(\gamma|_{E_{[0,\epsilon]}(A(0))})+\tr(\gamma|_{E_{[0,0]}(A(1))})-\tr(\gamma|_{E_{[0,0]}(A(\frac{4+\epsilon}{5}))})\\
    &=\tr(\gamma|_{R_2})-0+0-0\\
    &=\tr(\gamma).
\end{align*}
(Where, by slight abuse of notation, $\gamma$ first denotes the action of $\gamma$ on $L^2(S(\bbS^3))$ and later $\gamma$ itself acting on $\C^2$).

Now let us get back to our Lorentzian setting. Consider $M=[0,1]\times \bbS^3$ with metric $g=-dt^2+g_{5t}$ and $SU(2)$-acting in the natural way on the second factor. Then we have for the equivariant Dirac index on $M$ (for $\gamma\in SU(2)$):
\[\ind_\gamma(D_{APS})=\sfl_\gamma(A(t))=\tr(\gamma).\]
\end{ex}

%
%
%
%
%
%
%
%
%
%
\section*{Acknowledgments}

The authors are indebted to Christian B\"{a}r who suggested the project and made invaluable comments throughout the research.  
We are grateful to Alexander Strohmaier for pointing out and explaining his recent 
work 
on parameter-dependent fundamental solutions. 
O.I. is financially supported by Deutsche Forschungsgemeinschaft (DFG) --- Project Number 546644682: „Feynman-Green-Operatoren f\"{u}r Dirac-Operatoren mit nichtlokalen Randbedingungen“. 
At the earlier stage of this project, he was supported by 
\href{https://www.spp2026.de/projects/2nd-period/boundary-value-problems-and-index-theory-on-riemannian-and-lorentzian-manifolds}{SPP 2026: Geometry at Infinity} funded by DFG.

\bibliography{bibography}
\bibliographystyle{abbrvnat}
\end{document}